\documentclass[11pt]{article}
\usepackage[utf8]{inputenc}
\usepackage[percent]{overpic}
\usepackage{amsmath}
\usepackage{amssymb}
\usepackage{tikz}
\usepackage{tikz-cd}
\usepackage{mathtools}
\usepackage{stmaryrd}
\usepackage{amsthm}
\usepackage{array} % for centering text of given width
\usepackage[english]{babel}
\usepackage{enumitem}
\usepackage{xfrac}  % for special fractions
\usepackage{todonotes}
\usepackage[paper=a4paper, margin=2.cm]{geometry}
\usepackage{hyperref}
\usepackage{xcolor}

\hypersetup{
    unicode=false,          % non-Latin characters in bookmarks
    pdftoolbar=true,        % show toolbar?
    pdfmenubar=true,        % show menu?
    pdffitwindow=false,     % window fit to page when opened
    pdfstartview={FitH},    % fits the width of the page to the window
    pdftitle={Polyhedral models for K-theory of toric and flag varieties},    % title
    pdfauthor={ Monin, L.; Smirnov, E.},     % author
    pdfkeywords=  {Toric varieties} {Flag varieties} {Gelfand--Zetlin polytopes} {Gorenstein rings} % list of keywords
    pdfnewwindow=true,      % links in new window
    colorlinks=true,       % false: boxed links; true: colored links
    linkcolor=blue,          % color of internal links
    citecolor=red,        % color of links to bibliography
    filecolor=magenta,      % color of file links
    urlcolor=cyan           % color of external links
}

\theoremstyle{plain}
\newtheorem{theorem}{Theorem}[section]
\newtheorem{lemma}[theorem]{Lemma}
\newtheorem{proposition}[theorem]{Proposition}
\newtheorem{corollary}[theorem]{Corollary}

\theoremstyle{definition}
\newtheorem{definition}[theorem]{Definition}
\newtheorem{remark}[theorem]{Remark}
\newtheorem{example}[theorem]{Example}

\newcommand{\rleft}{\mathopen{}\mathclose\bgroup\left}
\newcommand{\rright}{\aftergroup\egroup\right}

\newcommand{\C}{{\mathbb{C}}}
\newcommand{\I}{{\mathbb{I}}}

\newcommand{\kk}{{\normalfont\mathbf{k}}}
\newcommand{\R}{{\mathbb{R}}}
\newcommand{\Z}{{\mathbb{Z}}}

\newcommand{\Q}{{\mathbb{Q}}}

\newcommand{\Bm}{{\mathcal{B}}}

\newcommand{\Dm}{{\mathcal{D}}}
\newcommand{\Fm}{{\mathcal{F}}}
\newcommand{\Vm}{{\mathcal{V}}}
\newcommand{\Um}{{\mathcal{U}}}
\newcommand{\Wm}{{\mathcal{W}}}
\newcommand{\Gm}{{\mathcal{G}}}

\newcommand{\Lm}{{\mathcal{L}}}

\newcommand{\Om}{{\mathcal{O}}}
\newcommand{\Pm}{{\mathcal{P}}}

\newcommand{\calL}{{\mathcal{L}}}

\newcommand{\kb}{{\mathbf{k}}}

\newcommand{\Ann}{\mathop{\mathrm{Ann}}}

\newcommand{\ch}{\mathop\mathrm{char}}

\newcommand{\Fl}{{\mathrm{Fl}}}
\newcommand{\GL}{{\mathrm{GL}}}

\newcommand{\Hom}{{\mathrm{Hom}}}
\newcommand{\ind}{{\mathrm{ind}}}

\newcommand{\Pic}{{\mathrm{Pic}}}

\newcommand{\pr}{{\mathrm{pr}}}
\newcommand{\pt}{{\mathrm{pt}}}

\newcommand{\Sym}{{\mathrm{Sym}}}
\newcommand{\vol}{{\mathrm{vol}}}

\newcommand{\Ehr}{{\mathrm{Ehr}}}
\newcommand{\Sh}{{\mathrm{Sh}}}
\newcommand{\Fun}{{\mathrm{Fun}}}
\usepackage{pgf,tikz}
\usepackage{mathrsfs}
\usetikzlibrary{arrows}
\definecolor{uuuuuu}{rgb}{0.26666666666666666,0.26666666666666666,0.26666666666666666}
\definecolor{yqyqyq}{rgb}{0.5019607843137255,0.5019607843137255,0.5019607843137255}
\definecolor{uququq}{rgb}{0.25098039215686274,0.25098039215686274,0.25098039215686274}

\title{Polyhedral models for $K$-theory of toric and flag varieties}
\author{Leonid Monin
  \and
Evgeny Smirnov
}
\date{\today}

\newcommand{\Addresses}{{% additional braces for segregating \footnotesize
  \bigskip
  \footnotesize

  \medskip

  L.~Monin, \textsc{EPFL, Lausanne, Switzerland}\par\nopagebreak
  \textit{E-mail address}: \texttt{leonid.monin@epfl.ch}
  
  \medskip
  
  E.~Smirnov, \textsc{HSE University, Russia}
  \par\nopagebreak
  \textit{E-mail address}: \texttt{esmirnov@hse.ru}
}}

\makeatletter
\newcommand{\subjclass}[2][1991]{%
  \let\@oldtitle\@title%
  \gdef\@title{\@oldtitle\footnotetext{#1 \emph{Mathematics subject classification.} #2}}%
}
\newcommand{\keywords}[1]{%
  \let\@@oldtitle\@title%
  \gdef\@title{\@@oldtitle\footnotetext{\emph{Key words and phrases.} #1.}}%
}
\makeatother

\subjclass[2020]{Primary 16E20, Secondary 14M15, 	14M25, 52B20}
\keywords{Toric varieties, flag varieties, Gelfand--Zetlin polytopes, Frobenius duality}

\theoremstyle{plain}
\newtheorem*{intro-thm}{Theorem}
\theoremstyle{definition}
\newtheorem*{intro-def}{Definition}

\begin{document}
\maketitle
\abstract{In 1992, Pukhlikov and Khovanskii  provided a description of the cohomology ring of toric variety as a quotient of the ring of differential operators on spaces of virtual polytopes. Later Kaveh generalized this construction to the case of cohomology rings for full flag varieties. 

In this paper we extend Pukhlikov--Khovanskii type presentation to the case of $K$-theory of toric and flag varieties. First, we study the Frobenius algebras obtained as quotients of the group algebra of free abelian group (possibly of infinite rank). Then we apply this construction to define a $K$-ring associated to a linear family of (virtual) polytopes. We study in detail two examples of such families: the family of integer (virtual) polytopes with a fixed normal fan and the family of (virtual) Gelfand-Zetlin polytopes. We show that the $K$-theory of toric and flag varieties can be realized as $K$-rings of the above families and use this to get natural set of relations in the above $K$-rings. Further, we describe the classes  of structure sheaves of toric orbit closures and Schubert varieties in type $A$ flag varieties. Finally, we show that our results also hold true in $T$-equivariant setting.}

%%%%%%%%%%%%%%%%%%%%%%%%%%%%%%%%%%%%%%%%%%%%%%%%%%%%%%%%%%%%%%%%%

%\tableofcontents

\section{Introduction}
\label{sec:intro}

%\subsection{Overview}

\noindent
This paper is devoted to the study of the $K$-theory of toric and flag
varieties by methods of polyhedral geometry. Our starting point is a classical idea going back to A.~Pukhlikov and A.~Khovanskii~\cite{PK92}, who described the cohomology ring of a smooth
complete toric variety $X$ as a quotient of a ring of differential
operators with constant coefficients by the annihilator of the volume
polynomial of the moment polytope of~$X$. This construction was
extended by K.~Kaveh~\cite{KavehVolume} to full flag varieties, with the
moment polytope replaced by a Gelfand--Zetlin polytope, and was used by
V.~Kiritchenko, the second-named author, and V.~Timorin~\cite{kirichenko2012schubert}
to build a polyhedral model for Schubert calculus on flag varieties.

At the heart of all these descriptions lies a form of \emph{Macaulay
duality}. Recall that for a finite-dimensional vector space $V$ over a
field $\kk$ of characteristic zero one identifies the symmetric algebra
$\Sym(V)$ with the ring of differential operators with constant
coefficients on $V$; the latter acts on polynomial functions on $V$.
For a polynomial $f$ one defines
\[
A_f=\Sym(V)/\Ann(f),
\qquad
\Ann(f)=\{\,D\in\Sym(V)\mid D\cdot f=0\,\}.
\]
One can show that, the algebra $A_f$ is Artinian, Gorenstein, and carries a
nondegenerate Frobenius pairing. Conversely, every Artinian Gorenstein
quotient of $\Sym(V)$ arises in this way. Moreover, if $f$ is a homogeneous polynomial, the algebra $A_f$ is graded the corresponding Frobenius pairing comes from Poincar\'e duality on $A_f$. Pukhlikov and Khovanskii observed that the cohomology ring of a smooth complete toric variety associated to a fan $\Sigma$ is isomorphic to the algebra $A_{\vol}$ associated to the volume polynomial on the linear family of (virtual) polytopes with normal fan $\Sigma$.

The purpose of this paper is to develop a $K$-theoretic analogue of this
theory and to apply it to toric and flag varieties. In the cohomology theory one
works with the volume, a homogeneous polynomial, and with differential
operators. For $K$-theory the natural replacement of the volume is the
number of lattice points, an inhomogeneous (Ehrhart) polynomial, and the
natural replacement of differentiation is \emph{finite differences}.
This leads us to replace $\Sym(V)$ by the group algebra of a lattice,
acting on functions on that lattice by shift operators. We carry out
this replacement in three steps: first an abstract duality theory for such ``discrete''
algebras (Section~\ref{sec:PKalgebras}); then a study of the resulting
rings for families of polytopes
(Sections~\ref{sec:polyprelim}--\ref{sec:polyKring}); and finally the
identification of these rings with the $K$-theory of toric and flag
varieties, together with explicit polyhedral representatives for
structure sheaves of orbit closures and Schubert varieties
(Sections~\ref{sec:polymodels}--\ref{sec:flags}). It worth to mention that unlike Puklikov--Khovanskii description for cohomology rings, our $K$-theoretic results work in $T$-equivariant setting (Theorems~\ref{thm:toricKeq} and~\ref{thm:kflag-string}). 

A part of the results of this paper was announced in the proceedings of FPSAC-2023~\cite{MoninSmirnov23}. A further generalization of this approach to an arbitrary oriented cohomology theory in the sense of Levine and Morel was given by V.~Petrov and G.~Shulga in~\cite{PetrovShulga24}; in this setting, the polynomial replacing the volume polynomial is computed using the Riemann--Roch theorem. Finally, in a forthcoming work \cite{dupraz}, motivated by the Pukhlikov--Khovanskii descriptions of the cohomology and $K$-rings authors develop Riemann--Roch theory for general pairs of discrete and continuous Frobenious quotients.

Throughout, by a \emph{lattice} we mean a free abelian group, possibly of
infinite rank.

% --------------------------------------------------------------
\subsection{Discrete Pukhlikov--Khovanskii rings}
% --------------------------------------------------------------

Let $\Lambda$ be a lattice and $\kk$ a field. Denote by
$\Fun(\Lambda,\kk)$ the space of all $\kk$-valued functions on
$\Lambda$, and by $\kk[\Lambda]$ the group algebra of $\Lambda$. The
group algebra acts on $\Fun(\Lambda,\kk)$ by translations: an element
$t=\sum_i a_i\lambda_i\in\kk[\Lambda]$ acts by
\[
(t\cdot f)(x)=\sum_i a_i\,f(x+\lambda_i),
\qquad x\in\Lambda .
\]
Under this action we identify $\kk[\Lambda]$ with the algebra
$\Sh(\Lambda)$ of \emph{shift operators with constant coefficients}.
For $x\in\Lambda$ we write $T_x$ for the shift by $-x$ and
$D_x=1-T_x$ for the corresponding \emph{difference operator}; these play
the role of the partial derivatives in the classical theory.

To any function we attach an algebra as follows.

\begin{intro-def}[Definition~\ref{def:discrete-pk}]
Let $f\in\Fun(\Lambda,\kk)$. Its \emph{annihilator} is the subspace
\[
\Ann(f)=\{\,s\in\Sh(\Lambda)\mid s\cdot f\equiv 0\,\},
\]
and the \emph{discrete Pukhlikov--Khovanskii ring}
(\emph{discrete PK-ring}) of $f$ is the quotient
\[
K_f=\Sh(\Lambda)/\Ann(f).
\]
\end{intro-def}

A linear functional $\ell$ on a commutative algebra $A$ defines a
symmetric bilinear pairing $\langle a,b\rangle_\ell=\ell(ab)$; we call
this pairing a \emph{Frobenius duality} when it is nondegenerate. Our
first result shows that the construction of Definition~\ref{def:discrete-pk}
always produces such a duality.

\begin{intro-thm}[{Theorem~\ref{thm:diffalg}}]
For any $f\in\Fun(\Lambda,\kk)$ the set $\Ann(f)$ is an ideal in
$\Sh(\Lambda)$, and the quotient $K_f$ carries a Frobenius duality
given by the functional
\[
\ell_f\colon K_f\to\kk,
\qquad
\ell_f([t])=(t\cdot f)(0).
\]
\end{intro-thm}

Conversely, the rings $K_f$ exhaust all commutative algebras with
Frobenius duality that are generated by invertible elements. %This is
%the discrete counterpart of the classification of Artinian Gorenstein
%quotients of $\Sym(V)$, and it is what makes the theory applicable to
%$K$-rings, which are generated by the (invertible) classes of line
%bundles.

\begin{intro-thm}[{Theorem~\ref{thm:goralgrec}}]
Let $A$ be a commutative algebra generated by invertible elements
$a_1,\dots,a_s\in A^\times$ and carrying a Frobenius duality given by a
functional $\ell$. Then $A\simeq K_f$, where
\[
f\colon\Z^s\to\kk,
\qquad
f(n_1,\dots,n_s)=\ell\bigl(a_1^{\,n_1}\cdots a_s^{\,n_s}\bigr).
\]
\end{intro-thm}

% Two features of this discrete theory deserve emphasis. First, no
% finiteness or grading hypotheses are needed: $\Lambda$ may have infinite
% rank and $f$ may be arbitrary. The ring $K_f$ is finite-dimensional
% exactly when $f$ is ``polynomial-like''; in particular, if $\Lambda$ has
% finite rank and $f$ is a polynomial, then $K_f$ is Artinian
% (Proposition~\ref{prop:polyart}). Second, the construction has a useful
% \emph{module} incarnation: $K_f$ is isomorphic, as a module over
% $\Sh(\Lambda)$, to the cyclic submodule $M_f=\Sh(\Lambda)\cdot f$ of
% $\Fun(\Lambda,\kk)$ generated by $f$ (Remark~\ref{rem:homology}). In the
% geometric applications $K_f$ plays the role of the $K$-ring (or
% cohomology ring) and $M_f$ the role of $K$-homology, the pairing between
% them being the Kronecker pairing. This module picture is what allows us,
% later, to make sense of certain ``virtual'' operators that do not
% literally exist in $K_f$ but do define elements of $M_f$.

Finally, the whole construction works with values in an arbitrary
abelian group $A$ in place of the field $\kk$
(Theorem~\ref{thm:dualitywithcoef}): to a function $f\colon\Lambda\to A$
one attaches a ring $K_f$ with an $A$-valued duality, and a homomorphism
$\phi\colon A\to B$ induces a homomorphism $K_f\to K_{\phi\circ f}$ of
the associated rings (Proposition~\ref{prop:basechange}).  Taking $A$ to
be the representation ring of a torus allows us to give a Pukhlikov--Khovanskii presentation for $T$-equivariant $K$-rings. In this setting, the homomorphism induced by the augmentation
$\sum k_i\lambda_i\mapsto\sum k_i$ is exactly the forgetful map from the $T$-equivariant to the ordinary $K$-rings.

% --------------------------------------------------------------
\subsection{Polytope $K$-rings}
% --------------------------------------------------------------

The functions $f$ relevant to geometry come from polytopes. We work with
\emph{virtual polytopes}: the group $\Pm$ of formal differences of
lattice polytopes under Minkowski addition, and, for a fixed fan
$\Sigma$, its subgroup $\Pm_\Sigma$ of virtual polytopes whose support
function is linear on the cones of $\Sigma$. The group $\Pm_\Sigma$ is a
lattice of finite rank. The basic invariant is the \emph{Ehrhart
polynomial} $\Ehr$, the unique polynomial function on $\Pm$ whose value
on an honest lattice polytope $P$ is the number $|P\cap\Z^d|$ of its
lattice points; its equivariant refinement is the \emph{integer point
transform} $\sigma(P)=\sum_{a\in P\cap\Z^d}\mathbf t^{\,a}$ and its
projections $\sigma_\pi$. (Virtual polytopes, convex chains, valuations,
the Ehrhart polynomial and the integer point transform are recalled in
Section~\ref{sec:polyprelim}.)

\begin{intro-def}[{Definition~\ref{def:K ring of linear family}}]%
\label{intro:def-linfamily}
A \emph{linear family of polytopes} parametrized by a lattice $\Lambda$
is a homomorphism $\lambda\mapsto P_\lambda$ from $\Lambda$ to $\Pm$,
i.e.\ $P_{\lambda+\mu}=P_\lambda+P_\mu$. Its \emph{$K$-ring} and
\emph{equivariant $K$-ring} are the discrete PK-rings of its Ehrhart
polynomial and its integer point transform respectively:
\[
K_\phi=\Sh(\Lambda)/\Ann\bigl(\Ehr(P_\lambda)\bigr),
\qquad
K_\phi^T=\Sh(\Lambda)/\Ann\bigl(\sigma(P_\lambda)\bigr).
\]
\end{intro-def}

The main computational result of this part of the paper is an explicit
presentation, by generators and relations, of the $K$-ring $K_\Sigma$ of
the tautological family attached to a smooth complete fan~$\Sigma$. We
stress that $\Sigma$ need not be projective: $K_\Sigma$ is defined
whether or not the family $\Pm_\Sigma$ contains an honest convex
polytope.

\begin{intro-thm}[{Theorem~\ref{thm:polalgrel}}]\label{intro:thm-polalgrel}
Let $\Sigma\subset(\R^n)^\vee$ be a smooth complete fan with rays
$\rho_1,\dots,\rho_r$ and primitive ray generators $e_1,\dots,e_r$.
Then
\[
K_\Sigma\simeq
\Q[T_1^{\pm1},\dots,T_r^{\pm1}]\big/(I+J),
\]
where $I$ is generated by the products
$(1-T_{i_1}^{-1})\cdots(1-T_{i_t}^{-1})$ over the collections of distinct
rays $\rho_{i_1},\dots,\rho_{i_t}$ that do \emph{not} span a cone of
$\Sigma$, and
\[
J=\Bigl\langle\,\textstyle\prod_{i=1}^r T_i^{\langle u,e_i\rangle}-1
\ \colon\ u\in\Z^n\,\Bigr\rangle .
\]
\end{intro-thm}

The two ideals have transparent meaning: the relations $J$ encode
translation invariance of the Ehrhart polynomial, while the relations
$I$ express that a product of difference operators
$D_i=1-T_i^{-1}$ over rays not forming a cone annihilates $\Ehr$. In
fact, for any cone $\sigma$ spanned by $\rho_{i_1},\dots,\rho_{i_t}$, the
\emph{face monomial} $D_\sigma=D_{i_1}\cdots D_{i_t}$ acts on $\Ehr(P)$
by restricting to the face of $P$ dual to $\sigma$
(Lemma~\ref{lem:Dsig}); this is the discrete analogue of the fact that
in cohomology a product of divisor classes computes the volume of a
face. The proof of Theorem~\ref{thm:polalgrel} occupies
Section~\ref{sec:polyKring}: after checking that the relations hold, we
produce an explicit additive basis of \emph{separatrix monomials},
indexed by the maximal cones of $\Sigma$, by means of a Morse-theoretic
argument with respect to a generic linear function. The argument
follows the Morse-theoretic method introduced by Timorin
in~\cite{timorin1999analogue} (see also~\cite{khovanskii1986hyperplane}),
adapted here to the $K$-theoretic setting.

% --------------------------------------------------------------
\subsection{Polyhedral models for $K$-theory of toric and flag
varieties}
% --------------------------------------------------------------

We now identify the abstract polytope $K$-rings with honest $K$-theory.
The mechanism is uniform: for a smooth variety $X$ with a cell
decomposition, the holomorphic Euler characteristic
$\langle\Fm,\Gm\rangle=\chi(X,\Fm\otimes\Gm)$ is a Frobenius duality on
$K_0(X)\otimes\Q$, and if $K_0(X)$ is generated by line bundles then
Theorem~\ref{thm:goralgrec} presents it as a discrete PK-ring. It
remains to identify the relevant function, which in each case is an
Euler characteristic computed combinatorially.

For toric varieties the Euler characteristic of a line bundle is its
Ehrhart polynomial, and we obtain the following.

\begin{intro-thm}[{Theorem~\ref{thm:toricK}}]\label{intro:thm-toricK}
Let $\Sigma$ be a smooth complete fan and $Y_\Sigma$ the corresponding
toric variety. Then
\[
K_0(Y_\Sigma)\simeq K_\Sigma=\Sh(\Pm_\Sigma)/\Ann(\Ehr).
\]
\end{intro-thm}

Combined with Theorem~\ref{intro:thm-polalgrel}, this recovers the
classical generators-and-relations description of $K_0(Y_\Sigma)$
(Corollary~\ref{cor:Krelations}; cf.~\cite{borisov2005k, sankaran08}) by
a purely polyhedral argument. Under the isomorphism, the structure sheaf
of the orbit closure $X_\sigma$ is represented by the face monomial
$D_\sigma$ (Proposition~\ref{prop:eqcoh} and the following statement).
The equivariant theory is obtained verbatim, with $\Ehr$ replaced by the
integer point transform $\sigma(P)=\sum_{u\in P\cap M}e^u$
(Theorem~\ref{thm:toricKeq}), and the nonequivariant statement is
recovered from it by the augmentation map of
Proposition~\ref{prop:basechange}.

For flag varieties the role of the polytope is played by string
polytopes. Let $G$ be a connected reductive group, $B$ a Borel subgroup,
$T\subset B$ a maximal torus, and $\Lambda=X^*(T)$ the weight lattice.
By the Borel--Weil--Bott theorem the $T$-equivariant Euler
characteristic of the line bundle $\calL_\lambda$ on $G/B$ is, up to
sign, the character of an irreducible representation, which we denote by $F^T_G(\lambda)$, and its nonequivariant specialization is the \emph{Weyl polynomial}
$F_G(\lambda)=\dim V(\lambda)$, a polynomial of degree $\dim(G/B)$ on
$\Lambda$. Already this yields a presentation of the $K$-ring
(Theorem~\ref{thm:kflag-general}):
\[
K_0(G/B)=\Sh(\Lambda)/\Ann(F_G),
\qquad
K_0^T(G/B)=\Sh(\Lambda)/\Ann(F_G^T).
\]
To make this polyhedral we use that, for a dominant weight $\lambda$,
the Weyl polynomial is the Ehrhart polynomial of a \emph{string
polytope} $S(\lambda)$ (a Gelfand--Zetlin polytope in type~$A$), and the
character of $V(\lambda)$ is a projected integer point transform of
$S(\lambda)$, which we denote by $\sigma_\pi(S(\lambda))$. String polytopes are only piecewise-linear in $\lambda$,
but on each cone of a suitable fan on the dominant chamber they form a
linear family, which we extend to a virtual linear family on all of
$\Lambda$. A key technical point (Proposition~\ref{prop:virtual
equality}), proved using Brion's theorem to propagate the Weyl character
formula off the dominant cone, is that the resulting identities
\[
\chi(G/B,\Lm_\lambda)=\Ehr(S(\lambda)),
\qquad
\chi_T(G/B,\Lm_\lambda)=\sigma_\pi(S(\lambda))
\]
remain valid for \emph{all} $\lambda\in\Lambda$. This gives the
polyhedral model.

\begin{intro-thm}[{Theorem~\ref{thm:kflag-string}}]\label{intro:thm-flag}
Let $G/B$ be a generalized full flag variety, $\Lambda=X^*(T)$, and
$S(\lambda)$ the (possibly virtual) string polytope of $\lambda$. Then
\[
K_0(G/B)=\Sh(\Lambda)/\Ann\bigl(\Ehr(S(\lambda))\bigr),
\qquad
K_0^T(G/B)=\Sh(\Lambda)/\Ann\bigl(\sigma_\pi(S(\lambda))\bigr).
\]
\end{intro-thm}

% --------------------------------------------------------------
\subsection{Schubert classes and $K$-theoretic Schubert calculus}
% --------------------------------------------------------------

The last part of the paper, Sections~\ref{sec:gz}--\ref{sec:flags},
makes the flag model fully explicit for $G=\GL(n)$, where $S(\lambda)$ is
the Gelfand--Zetlin polytope $GZ(\lambda)$ and $K_{GZ}$ denotes the
corresponding polytope $K$-ring. We first describe the polytope ring of
$GZ(\lambda)$ combinatorially. Since $GZ(\lambda)$ is not simple, the
naive face relations need to be organized carefully; we show that they
are generated by an explicit family of \emph{six-term relations} among
the faces.

\begin{intro-thm}[{Theorem~\ref{thm:6term}}]\label{intro:thm-6term}
In the polytope ring of $GZ(\lambda)$ the following relations hold for
all admissible $(r,s)$:
\[
[F^{rs}]+[F_{r+1,s}]-[F^{rs}_{r+1,s}]
=[F_{rs}]+[F^{r+1,s-1}]-[F^{r+1,s-1}_{rs}],
\]
where summands indexed out of range are set to zero.
\end{intro-thm}

Their linear parts recover the relations in the cohomological
Pukhlikov--Khovanskii ring of~\cite{kirichenko2012schubert}, so the
six-term relations are a genuine $K$-theoretic refinement.

Our main result on Schubert calculus identifies the structure sheaves of
Schubert varieties in $K_{GZ}$. To a permutation $w\in S_n$ one
associates, following~\cite{kirichenko2012schubert}, a collection
$\Fm(w)$ of \emph{Kogan faces} of $GZ(\lambda)$. We then set
\[
\Dm_w=\sum_{\Gamma\in\Fm(w)}(-1)^{\ell(w)-\dim\Gamma}\,D_\Gamma,
\]
where $D_\Gamma$ is the operator computing the lattice-point count of
the face $\Gamma$.

\begin{intro-thm}[{Theorem~\ref{thm:schubert classes}}]\label{intro:thm-schubert}
For $w\in S_n$, the class $[\Om_w]$ of the structure sheaf of the
Schubert variety $X_w\subset\GL(n)/B$ is represented in $K_{GZ}$ by
$\Dm_w$. In particular, $\Dm_w$ is a well-defined element of $K_{GZ}$.
\end{intro-thm}

A phenomenon with no toric counterpart appears here: since the
Gelfand--Zetlin fan is singular, the individual face operators
$D_\Gamma$ need \emph{not} exist in $K_{GZ}$. The point of
Theorem~\ref{thm:schubert classes} is that the alternating sum $\Dm_w$ is
nevertheless well defined. 
%a fact most naturally explained through the module $M_f$ of Remark~\ref{rem:homology}, or, equivalently, by passing to a smooth subdivision of the Gelfand--Zetlin fan as inProposition~\ref{prop:extended}. 
This result provides a polyhedral model
for $K$-theoretic Schubert calculus on $\GL(n)/B$ and lifts to
$K$-theory the description of Schubert cohomology classes obtained in
type~$A$ in~\cite{kirichenko2012schubert}. We illustrate the calculus by computing several products of
structure sheaves of Schubert varieties in $K(\GL(3)/B)$
(Subsection~\ref{ssec:example_gl3}).

The analogous statements in other types should follow from Littelmann's string polytopes, which we leave to
future work. As a first step, the polyhedra description of cohomology classes of Schubert varieties in type~$C$ was obtained by
N.~Fujita~\cite{Fujita22}.

\subsection{Acknowledgements}
This project started during the authors' visit to the Weizmann Institute of Science in June 2022. We thank this institution and especially Dmitry Gourevich and Dmitry Novikov for their warm hospitality. We also express our deep gratitude to Michel Brion and Igor Makhlin for fruitful discussions.

LM was partially supported by the DFG project number 539974215 and by SNSF grant 224099. ES was partially supported by the Basic Research Program of HSE University (HSE-BR-2025-84).
%%%%%%%%%%%%%%%%%%%%%%%%%%%%%%%%%%%%%%%%%%%%%%%%%% %%%%%%%%%%%%%%%

\section{Discrete Pukhlikov--Khovanskii algebras}\label{sec:PKalgebras}

In this section we define discrete version of Pukhlikov--Khovanskii algebras and prove some basic results about them. 
%A more comprehensive theory of Pukhlikov--Khovanskii algebras will be developed in forthcoming paper \cite{dupraz}. 

\subsection{Algebra associated to a function on a lattice}\label{ssec:alg}
%Here we associate an algebra with Frobenius duality to any function $f\colon \Lambda\to \kk$ on a lattice. 

Let $\kk$ be any field and let $A$ be a commutative algebra with identity over $\kk$. In further subsections of the paper we will stick with $\kk= \Q$; however, here we  work with a more general setting. A linear function $\ell \colon A\to \kk$ defines a symmetric bilinear pairing on $A$ via:
\[
\langle a,b\rangle_{\ell} := \ell(a\cdot b) \text{ for any } a, b\in A. 
\]
\begin{definition}
  A pairing $\langle\cdot,\cdot\rangle_\ell$ on algebra $A$ is called \emph{Frobenius duality} if it is non-degenerate. 
\end{definition}
The main objective of this subsection is to give a construction of algebras with Frobenius duality from a $\kk$-valued function on a lattice. Recall that by a lattice we mean a (possibly infinitely generated) free abelian group. 

Let $\Lambda$ be a lattice and let $\kk$ be any field. We will denote by $\Fun(\Lambda,\kk)$ the set of maps $f\colon\Lambda\to \kk$.
%(note that $\Fun(\Lambda,\kk)$ has an algebra structure given by pointvise addition and multiplication of functions). 
Further, denote  the group algebra of $\Lambda$ by $\kk[\Lambda]$. The group algebra $\kk[\Lambda]$ is acting on the set of functions $\Fun(\Lambda,\kk)$ via shift operators. That is, for $t = \sum_{i=1}^r a_i\lambda_i\in \kk[\Lambda]$ and $f\in \Fun(\Lambda,\kk)$ we have
\[
t\cdot f(x) = \sum_{i=1}^r a_i f(x+\lambda_i),\text{ for any } x\in \Lambda.
\]
In what follows we identify the group algebra $\kk[\Lambda]$ with the algebra of shift operators with constant coefficients $\Sh(\Lambda)$ on $\Lambda$. We will further denote by  $T_x$  the shift operator by $-x$ (note the sign choice!) and by $D_x=1-T_x$ the corresponding difference operator  for any $x\in \Lambda$.

\begin{definition}\label{def:discrete-pk}
   Let $f\in \Fun(\Lambda,\kk)$ be any function, and let $\Ann(f) = \{s\in \Sh(\Lambda) \,|\, s\cdot f \equiv 0\}$ be its annihilator in the ring of shift operators with constant coefficients. Then we will call the quotient algebra
   $K_f := \Sh(\Lambda)/\Ann(f)$  the \emph{discrete Pukhlikov--Khovanskii algebra} associated to $f$.
\end{definition}

\begin{theorem}\label{thm:diffalg}
Let $f\in \Fun(\Lambda,\kk)$ be any function. Then the discrete Pukhlikov--Khovanskii algebra $K_f$ is well-defined, i.e., $\Ann(f)$ is an ideal in $\Sh(\Lambda)$. Moreover, $K_f$ has a Frobenius duality defined by a function
\[
\ell_f\colon \Sh(\Lambda)\to \kk,\quad \ell_f\colon t \mapsto t\cdot f(0).
\]
In particular, linear function $\ell_f$ descends to a well-defined linear function on $K_f$.
\end{theorem}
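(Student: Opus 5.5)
The plan has three steps: show that $\Ann(f)$ is an ideal, show that $\ell_f$ vanishes on $\Ann(f)$, and show that the induced pairing on $K_f$ is non-degenerate. The first two are formal consequences of the fact that $\Fun(\Lambda,\kk)$ is a module over the commutative algebra $\Sh(\Lambda)=\kk[\Lambda]$. The only point needing care is the non-degeneracy, where the key is that shifts let us evaluate $t\cdot f$ at any point, not just at $0$.

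\emph{Step 1: $\Ann(f)$ is an ideal.} The action $t\cdot f(x)=\sum a_i f(x+\lambda_i)$ is a module action: $(st)\cdot f=s\cdot(t\cdot f)$. This holds because shifting by $\lambda$ and then by $\mu$ equals shifting by $\lambda+\mu$, and everything extends bilinearly. Hence $\Ann(f)$ is the annihilator of an element of a module. It is therefore closed under addition, and if $s\cdot f=0$ then $(ts)\cdot f=t\cdot(s\cdot f)=0$ for every $t$. Since the algebra is commutative, this makes $\Ann(f)$ a two-sided ideal, so $K_f$ is a well-defined commutative algebra.

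\emph{Step 2: $\ell_f$ descends to $K_f$.} The map $\ell_f$ is linear in $t$. If $t\in\Ann(f)$, then $t\cdot f\equiv 0$, and in particular $(t\cdot f)(0)=0$. So $\ell_f$ vanishes on $\Ann(f)$ and induces a linear functional on $K_f$.

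\emph{Step 3: non-degeneracy.} Suppose $[s]\in K_f$ satisfies $\ell_f(st)=0$ for all $t\in\Sh(\Lambda)$. Take $t$ to be the group element $\lambda\in\Lambda$, that is, the shift by $\lambda$, which equals $T_{-\lambda}$ in the paper's notation. Then
\[
\ell_f(s\lambda)=\bigl(\lambda\cdot(s\cdot f)\bigr)(0)=(s\cdot f)(\lambda).
\]
As $\lambda$ ranges over $\Lambda$, the hypothesis gives $(s\cdot f)(\lambda)=0$ for every $\lambda$. So $s\cdot f\equiv 0$, i.e. $s\in\Ann(f)$ and $[s]=0$ in $K_f$. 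Because the pairing is symmetric, degeneracy on the left and on the right coincide, so the pairing is non-degenerate.

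This third step is the only non-formal part. It works because $\Lambda$ itself sits inside $\Sh(\Lambda)$, so pairing against shifts recovers all values of $s\cdot f$. No finiteness of $\Lambda$ or of $K_f$ is needed. Non-degeneracy here means only that the kernel is trivial, and that is exactly what the argument gives.
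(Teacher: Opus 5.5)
Your proposal is correct and follows the paper's proof essentially step for step. You get the ideal property from the module action $(s's)\cdot f=s'\cdot(s\cdot f)$, note that $\ell_f$ vanishes on $\Ann(f)$, and prove non-degeneracy by pairing $s$ against the shift by $\lambda$, which gives $\ell_f(s\lambda)=(s\cdot f)(\lambda)$; the paper uses exactly this argument, with the notion of non-degeneracy being trivial kernel.
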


\begin{proof}
Since the action of $\Sh(\Lambda)$ on $\Fun(\Lambda,\kk)$ is linear, for checking that $\Ann(f)$ is an ideal it is enough to show that $s s' \in \Ann(f)$ whenever $s\in \Ann(f)$. Indeed, $(s's)\cdot f = s'\cdot(s\cdot f)\equiv 0$.

Then note that $\ell_f(s)=0$ whenever $s\in\Ann(f)$, hence $\ell_f$ descends to a well-defined linear function on $K_f$. To show that $\ell_f\colon K_f\to \kk$ defines a Frobenius duality on $K_f$ it is enough to show that for any $s\in \Sh(\Lambda)$ such that $s\notin \Ann(f)$, there exists $s'$ with $\ell_f(ss')\ne 0$. For this notice that if $s\cdot f\not\equiv 0$, there exist $x\in \Lambda$ such that $s\cdot f(x)\ne 0$. Then we can take $s'= S_{x}$ as $\ell_f(S_x s)=S_x\cdot( s\cdot f)(0)= s\cdot f(x) \ne 0$.
\end{proof}

The construction from Theorem~\ref{thm:diffalg} is quite general, as the following theorem shows.

\begin{theorem}\label{thm:goralgrec}
Let $A$ be a commutative algebra generated by invertible elements $a_1,\ldots, a_s \in A^\times$ with Frobenius duality given by a function $\ell\colon A\to \kb$. Then $A\simeq K_f$ with 
\[
f\colon \Z^s\to \kb,\quad f\colon (n_1,\dots,n_s)\mapsto \ell(a_1^{n_1}\ldots a_s^{n_s}).
\]
\end{theorem}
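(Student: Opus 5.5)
The plan is to realize $A$ as a quotient of the group algebra and then identify the kernel with $\Ann(f)$. Let $\Lambda=\Z^s$ with standard basis $\epsilon_1,\dots,\epsilon_s$, and consider the algebra homomorphism
\[
\phi\colon \Sh(\Lambda)=\kb[\Z^s]\to A,\qquad \phi\Bigl(\sum_i c_i\,\mu_i\Bigr)=\sum_i c_i\, a^{\mu_i},
\]
where for $\mu=(n_1,\dots,n_s)$ we write $a^{\mu}=a_1^{n_1}\cdots a_s^{n_s}$. This is well defined because the $a_j$ are invertible and $A$ is commutative, so $\mu\mapsto a^\mu$ is a group homomorphism $\Z^s\to A^\times$. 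It is surjective because $A$ is generated by the $a_j$, and already the images of the nonnegative monomials span $A$. It therefore suffices to show that $\ker\phi=\Ann(f)$, since then $A\simeq \Sh(\Lambda)/\Ann(f)=K_f$.

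The central computation is a formula for the action on $f$. For $t=\sum_i c_i\mu_i$ and $x\in\Z^s$,
\[
(t\cdot f)(x)=\sum_i c_i f(x+\mu_i)=\sum_i c_i\,\ell\bigl(a^{x}a^{\mu_i}\bigr)=\ell\bigl(a^{x}\,\phi(t)\bigr),
\]
which uses only linearity of $\ell$ and multiplicativity of $\mu\mapsto a^\mu$. I also need to keep the sign convention straight: the group element $\mu$ acts by shifting the argument by $+\mu$, consistent with the displayed action formula. With this formula both inclusions follow.

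If $\phi(t)=0$, then $(t\cdot f)(x)=0$ for every $x$, so $t\in\Ann(f)$. For the converse, suppose $t\in\Ann(f)$. Then $\ell(a^x\phi(t))=0$ for all $x\in\Z^s$. The elements $a^x$ span $A$ by surjectivity of $\phi$, so $\langle b,\phi(t)\rangle_\ell=\ell(b\,\phi(t))=0$ for all $b\in A$. Non-degeneracy of the Frobenius duality then gives $\phi(t)=0$.

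Finally, I will check that the isomorphism carries $\ell_f$ to $\ell$: $\ell_f(t)=(t\cdot f)(0)=\ell(\phi(t))$. This is not needed for the statement, but it is worth recording. The only genuinely delicate step is the converse inclusion, and it rests on two facts: the monomials $a^x$ span $A$, and non-degeneracy is used in the form "$\ell(b\,c)=0$ for all $b$ implies $c=0$". Because the pairing is symmetric, left and right non-degeneracy coincide. No finiteness of $A$ is needed anywhere.
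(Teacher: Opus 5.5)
Your proof is correct and follows the paper's route: the surjection $\phi\colon\Sh(\Z^s)\to A$, the identity $(t\cdot f)(0)=\ell(\phi(t))$ (which the paper phrases as pulling back $\langle\cdot,\cdot\rangle_\ell$ to $\Sh(\Z^s)$), and non-degeneracy plus surjectivity to conclude $\ker\phi=\Ann(f)$. Your formula $(t\cdot f)(x)=\ell(a^{x}\phi(t))$ proves both inclusions directly instead of citing $\ker\langle\cdot,\cdot\rangle_f=\Ann(f)$ from the argument of Theorem~\ref{thm:diffalg}, and your convention $\mu\mapsto a^{\mu}$ keeps the signs consistent, which the paper's $T_i\mapsto a_i$ (with $T_i$ the shift by $-x$) glosses over.
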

\begin{proof}
    Indeed, since $a_1,\ldots, a_s$ generate the algebra $A$, there is a surjective homomorphism 
    \[
    \phi\colon \Sh(\Z^s)\simeq \kb[T_1^{\pm 1},\ldots T_s^{\pm 1}] \to A,\quad T_i^{\pm 1}\mapsto a_i^{\pm 1}. 
    \]
    Hence, it is enough to show that $\ker(\phi)=\Ann(f)$. For this, we notice that by definition of $f$, we get $s\cdot f(0)=\ell(\phi(s))$ for any $s\in \Sh(\Z^s)$. In particular, we get that the pullback of $\langle\cdot,\cdot\rangle_\ell$ to $\Sh(\Z^s)$  is a pairing defined by
    \[
    \langle s, t \rangle_f = s\cdot t\cdot f(0)\text{ for } s,t\in \Sh(\Z^s).
    \]
    Moreover, since $\langle\cdot,\cdot\rangle_\ell$ is non-degenerate on $A$, we have $\ker(\phi)=\ker\langle \cdot, \cdot \rangle_f$. The proof follows from the fact that $\ker\langle \cdot, \cdot \rangle_f = \Ann(f)$.
\end{proof}

\begin{remark}
A version of Theorem~\ref{thm:diffalg} is also true for any numerical semigroup $\Lambda$ \cite{dupraz}. In this more general form Theorem~\ref{thm:diffalg} is closely related to the explicit version of Macaulay duality recently studied in \cite{khovanskii2021gorenstein} (see also \cite{hof2020, uspehi} for related results).
\end{remark}

%Note that for a general function $f\colon\Lambda\to \kk$ the ideal $\Ann(f)$ might be trivial. 

This construction makes sense for an arbitrary function $f$. However, the generic situation is not interesting: for ``most'' functions $f$, we have $\Ann(f)=0$ (see Lemma~\ref{lem:triv} below).
%In fact, as illustrated by Lemma~\ref{lem:triv}, for most functions $f$, we have $\Ann(f)=0$. 
However, for some classes of functions $f$, the ideal $\Ann(f)$ is nontrivial.
For instance, if $f$ is a quasipolynomial on the lattice $\Lambda$, the quotient algebra $K_f$ is Artinian, i.e. is a finite dimensional vector space over~$\kk$.

\begin{lemma}\label{lem:triv} Let $\Lambda =\Z$ be a one-dimensional lattice, and let $T=\sum_{i=r}^s \lambda_iT_i$ for some $r,s\in\Z$. Then the set of functions $f\colon\Lambda\to \kk$ annihilated by $T$ is a finite-dimensional vector subspace in the space of quasipolynomials. 
\end{lemma}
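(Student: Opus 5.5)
The plan is to reduce the statement to the classical theory of linear recurrences with constant coefficients. Throughout we assume $T\neq 0$; for $T=0$ every function is annihilated and the statement is void. Write $T$ as a Laurent polynomial $\sum_{i=r}^s\lambda_i T^i$ in the generator $T=T_1$ of $\Sh(\Z)\simeq\kk[T^{\pm1}]$, so that $(T^i f)(x)=f(x-i)$. Since every monomial $T^i$ is invertible in $\Sh(\Z)$, multiplying $T$ by a power of $T$ does not change the set of functions it annihilates. We may therefore shrink the range so that $\lambda_r\neq0$ and $\lambda_s\neq0$, and then shift so that $r=0$. Put $d=s-r\ge 0$.

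First I would prove finite-dimensionality. The equation $T\cdot f\equiv0$ reads
\[
\lambda_0 f(x)+\lambda_1 f(x-1)+\dots+\lambda_d f(x-d)=0\qquad\text{for all } x\in\Z .
\]
Because $\lambda_0\neq0$, this expresses $f(x)$ through the $d$ preceding values. Because $\lambda_d\neq0$, it also expresses $f(x-d)$ through the $d$ following values. By induction in both directions, $f$ is uniquely determined by its values $f(0),\dots,f(d-1)$, and any choice of these values extends to a solution. Hence the solution space is a vector space of dimension exactly $d$; in particular it is finite-dimensional. If $d=0$, then $T$ is a unit and only $f=0$ is annihilated.

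Second I would identify the solutions as quasipolynomials. Let $\chi(z)=\sum_{i=0}^d\lambda_i z^{d-i}$ be the characteristic polynomial. Its constant term is $\lambda_d\ne0$, so every root $\alpha$ is nonzero. Pass to a splitting field $\bar\kk\supset\kk$, and let $\alpha_1,\dots,\alpha_k$ be the distinct roots with multiplicities $m_1,\dots,m_k$. A direct check shows that $x^j\alpha_j^{\,x}$ with $0\le j<m_j$ solves the recurrence: the operator factors as $\lambda_0\prod_j(1-\alpha_jT)^{m_j}$, the factor $(1-\alpha T)$ acts on $p(x)\alpha^x$ as $\alpha^x$ times a finite difference of $p$, and each finite difference lowers the degree of $p$ by one. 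These $\sum m_j=d$ functions are linearly independent, which one sees by a Vandermonde-type argument or by using the distinct exponential growth rates. By the dimension count they form a basis, so every solution over $\bar\kk$, and hence over $\kk$, is of the form $\sum_j p_j(x)\alpha_j^x$ with $\deg p_j<m_j$.

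The main obstacle is terminological rather than mathematical: the word ``quasipolynomial'' has to be read in the generalized sense of exponential polynomials $\sum p_j(x)\alpha_j^x$. These are honest quasipolynomials, with periodic coefficients, exactly when all the $\alpha_j$ are roots of unity. I would state this explicitly. The use the paper actually needs is the consequence that, for functions that are not of this special form, such as generic ones, no nonzero one-variable $T$ annihilates them, which is the sense in which $\Ann(f)=0$ for ``most'' $f$.
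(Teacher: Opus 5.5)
Your proposal is correct and follows the same route as the paper's two-line proof: rewrite $T\cdot f\equiv 0$ as a constant-coefficient linear recurrence and use the characteristic-root solutions $x^j\alpha^x$. It is also more careful, since after normalizing $\lambda_r\lambda_s\neq 0$ your count $\dim=s-r$ is the correct one (the paper's $s-r+1$ is an off-by-one slip: $r=s$ gives a unit with zero kernel), and your reading of ``quasipolynomial'' as ``exponential polynomial'' is genuinely needed, since over $\Q$ the operator $1-2T_1$ annihilates $2^x$.

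The one step to tighten, which the paper shares, is the independence of the $x^j\alpha_j^x$. Argue it via the splitting of the solution space along the coprime factors $(1-\alpha_jT)^{m_j}$ rather than by growth rates, which cannot separate, say, $\alpha=1$ from $\alpha=-1$. Note also that it needs $m_j\le p$ in characteristic $p$, because there $x^p=x$ on $\Z$; for larger multiplicities use $\binom{x}{j}\alpha_j^x$ instead. Your finite-dimensionality argument itself is valid over any field.
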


\begin{proof}
The functions annihilated by $T$ are exactly those satisfying the linear recurrence relation
\[
\sum_{i=r}^s \lambda_i f(x-i)=0,
\]
and the space of such functions has dimension $s-r+1$: it is spanned by quasipolynomials $x^je^{\alpha}$, where $\alpha$ is a root of the characteristic equation for this recurrence relation, and $j$ is strictly less than the multiplicity of this root.
\end{proof}

This shows that for any function $g$ that is not a quasipolynomial on $\Z$ the annihilator $\Ann(g)$ is trivial. Note that for higher dimensions this is not true: there exist non-quasipolynomial functions with nonzero annihilator. Further we will be mostly interested in the case of polynomial $g$.

% \begin{example}\label{ex:triv}
% Let $g(x) = e^{e^{x}}$ be a double exponential function on one-dimensional lattice $\Z$ and let $T=\sum_{i=1}^s\lambda_iT_{k_i}$ with $k_1<\ldots<k_s$. Then $|T\cdot g(x)|> 0$ for $x\gg 0$, in particular $T\cdot g\not\equiv 0$  so $\Ann(g)$ is trivial.  
% \end{example}

% \begin{lemma}\label{lem:triv}
% Let $\Lambda =\Z$ be a one-dimensional lattice. Then for each nontrivial shift operator $T=\sum_{i=1}^k\lambda_it_{r_i}$ the set of functions $f\colon\Z\to \kk$ annihilated by $T$ is a finite-dimensional vector space. 

% %In particular, if $\kk$ is a countable, the set of functions $f$ with nontrivial annihilator $\Ann(f)\ne 0$ is countable, while the set of all functions $\kk^\Z$ is uncountable.
% \end{lemma}
% \begin{proof}
%     Since $T=\sum_{i=1}^s \lambda_iT_{k_i}$ with $k_1<\ldots<k_s$ is a linear operator, the set of functions annihilated by $T$ is a vector space. Moreover, if the function $f:\Z\to \kk$ is annihilated by $T$ it is determined by its values at points $0,\ldots, k_s-k_1$, hence the annihilator of $T$ is a finite dimensional vector space. The second statement obviously follows.
% \end{proof}

\begin{proposition}\label{prop:polyart}
  Let $\Lambda\simeq \Z^r$ be of finite rank, and let $f$ be a polynomial function on $\Lambda$. Then $K_f$ is an Artinian algebra.
\end{proposition}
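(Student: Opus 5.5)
The plan is to show that $K_f$ is finite-dimensional over $\kk$ by identifying it with the cyclic submodule $M_f=\Sh(\Lambda)\cdot f\subset\Fun(\Lambda,\kk)$. The map $t\mapsto t\cdot f$ is $\kk$-linear with kernel exactly $\Ann(f)$, so it gives an isomorphism of vector spaces $K_f\simeq M_f$. It therefore suffices to show that $M_f$ is finite-dimensional.

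For this, fix a basis of $\Lambda\simeq\Z^r$ and let $d=\deg f$. Every shift $T_x$ sends a polynomial of degree at most $d$ to a polynomial of degree at most $d$, since $f(y-x)$ is again a polynomial in $y$ of the same degree. Hence every element $t\cdot f$ of $M_f$ is a finite linear combination of shifts of $f$ and lies in the space $\kk[y_1,\dots,y_r]_{\le d}$. Here we use that polynomial functions on $\Lambda$ are restrictions of polynomials in the coordinates.

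This space has dimension $\binom{r+d}{d}$, so $\dim_\kk K_f\le\binom{r+d}{d}<\infty$. A finite-dimensional commutative $\kk$-algebra is Artinian, since descending chains of ideals are chains of subspaces and must stabilize.

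The only subtle point is when $\kk$ has positive characteristic. There one must interpret ``polynomial function on $\Lambda$'' as a function given by a polynomial in the coordinates with coefficients in $\kk$. This includes binomial-basis polynomials $\binom{y}{k}$, which can be written as polynomials only after clearing denominators. In either reading the span of shifts stays inside a fixed finite-dimensional space: for the binomial basis, $\binom{y-x}{k}$ is a $\Z$-combination of $\binom{y}{j}$ with $j\le k$ by Vandermonde's identity. So the argument goes through, and I expect no real obstacle beyond this bookkeeping.

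Alternatively, one could show directly that $D_{e_i}^{d+1}\in\Ann(f)$ for each basis vector $e_i$, since each difference operator lowers degree in $y_i$ by one. Then $K_f$ is a quotient of $\kk[T_1^{\pm1},\dots,T_r^{\pm1}]/(D_1^{d+1},\dots,D_r^{d+1})$. Because each $T_i=1-D_i$ is invertible modulo $D_i^{d+1}$, this quotient equals $\kk[D_1,\dots,D_r]/(D_i^{d+1})$, which has dimension $(d+1)^r$. This gives a second route to finite-dimensionality.
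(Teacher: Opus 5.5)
Your proof is correct, but your main argument takes a different route from the paper's.

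\textbf{The paper's route.} The paper works entirely with difference operators. Monomials in $D_i^{+}=1-T_i$ and $D_i^{-}=1-T_i^{-1}$ span $\kk[\Lambda]$, and every such monomial of total degree $>d$ kills a polynomial of degree $d$. Hence $K_f$ is spanned by finitely many difference monomials. This is essentially your second route. You streamline it by noting that $T_i$ is already invertible modulo $D_i^{d+1}$, so only one family of difference operators is needed; the paper uses the same trick later to express $D_i^{-}$ through $D_i$.

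\textbf{Your main route.} You use the vector-space isomorphism $K_f\simeq M_f$ of Remark~\ref{rem:homology} together with the fact that translations preserve the space of polynomials of degree at most $d$.

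\textbf{What each approach buys.} Your main route gives the sharper bound $\dim_\kk K_f\le\binom{r+d}{d}$. More importantly, it works verbatim for any $f$ whose translates span a finite-dimensional space, such as the quasipolynomials mentioned just before Lemma~\ref{lem:triv}. There the nilpotency argument breaks down: on $\Z$, the operator $1-T_1$ acts on $f(x)=(-1)^x$ as multiplication by $2$. The paper's route, in turn, gives more structure. The classes of the $D_i$ are nilpotent, so $K_f$ is local with maximal ideal generated by the $D_i$. It also carries the filtration by degree of difference monomials, which is what Section~\ref{sec:polyKring} exploits (Lemma~\ref{lem:squarefree}, separatrix monomials).
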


\begin{proof}
Let $D^+_i= 1- T_i$ and $D_i^-=1- T_i^{-1}$ be the standard difference operators for $i=1,\ldots,r$. Then  monomials in $D_i^+$ and $D_j^-$ additively generate $\kk[\Lambda]$. The statement follows from the fact that for a polynomial $f$ of degree $d$, one has $(D^{\pm}_1)^{k_1}\ldots (D^{\pm}_r)^{k_r}\cdot f=0$ for $|k_1|+\ldots+|k_r|>d$.
\end{proof}

%\begin{colortext}{teal}
    
We finish this subsection with a description of the relation between two algebras $K_f, K_g$ defined by pairs $(\Lambda_g,g)$ and $(\Lambda_f,f)$ such that there is a lattice homomorphism $\sigma\colon \Lambda_g\to \Lambda_f$ with $g=\sigma^* f$. Our description is parallel to \cite[Proposition~2.4]{kirichenko2012schubert}.

%\todoillm{Replace by Poincar\'e duality description?}

\begin{proposition}\label{prop:extended}
  There exists an abelian group $M_{f,g}$ with an epimorphism $\pi\colon K_f\to M_{f,g}$ and a monomorphism $\iota\colon K_g\to M_{f,g}$ such that
   $\pi(\widetilde \alpha \widetilde \beta) =\iota(\alpha \beta)$  whenever $\pi(\widetilde \alpha)=\iota(\alpha)$ and $\pi(\widetilde \beta)=\iota(\beta)$.
\end{proposition}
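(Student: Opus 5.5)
Set $M_{f,g}$ to be the cyclic $\Sh(\Lambda_f)$-submodule of $\Fun(\Lambda_g,\kk)$ generated by $g=\sigma^*f$, i.e. the module of pulled-back functions. Concretely, the homomorphism $\sigma\colon\Lambda_g\to\Lambda_f$ induces an algebra homomorphism $\sigma_*\colon\Sh(\Lambda_g)\to\Sh(\Lambda_f)$, $T_x\mapsto T_{\sigma(x)}$. The plan is to take
\[
M_{f,g}:=\sigma^*\bigl(\Sh(\Lambda_f)\cdot f\bigr)=\{\sigma^*(t\cdot f)\mid t\in\Sh(\Lambda_f)\}\subset\Fun(\Lambda_g,\kk),
\]
and to define $\pi([t])=\sigma^*(t\cdot f)$ and $\iota([s])=s\cdot g$.

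First I check well-definedness. If $t\in\Ann(f)$ then $t\cdot f\equiv0$, so $\pi$ is well defined on $K_f$, and it is surjective by construction. For $\iota$, the key identity is $s\cdot\sigma^*f=\sigma^*(\sigma_*(s)\cdot f)$. This holds because $(T_x\cdot\sigma^*f)(y)=f(\sigma(y)-\sigma(x))=(\sigma^*(T_{\sigma(x)}f))(y)$, extended linearly. Hence $\iota([s])=\pi([\sigma_*s])$ lies in $M_{f,g}$. The map $\iota$ is well defined and injective precisely because $\ker(s\mapsto s\cdot g)=\Ann(g)$; this is the module incarnation of $K_g$.

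Next I verify the multiplicativity property. Suppose $\pi(\widetilde\alpha)=\iota(\alpha)$ and $\pi(\widetilde\beta)=\iota(\beta)$, with representatives $\widetilde\alpha=[t]$, $\widetilde\beta=[t']$, $\alpha=[s]$, $\beta=[s']$. Then $\sigma^*(t\cdot f)=s\cdot g$ and $\sigma^*(t'\cdot f)=s'\cdot g$. We want $\sigma^*(tt'\cdot f)=ss'\cdot g$. Using the identity above twice:
\[
\sigma^*(tt'\cdot f)=\sigma^*(t\cdot(t'\cdot f)) \quad\text{and}\quad ss'\cdot g=s\cdot\sigma^*(t'\cdot f)=\sigma^*(\sigma_*(s)t'\cdot f)=\sigma^*(t'\cdot(\sigma_*(s)\cdot f)).
\]
Comparing the two, what is needed is that the shift action of $\sigma_*(s)$ and of $t$ agree after pullback, even when applied to $t'\cdot f$ rather than to $f$.

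This step is the main obstacle. The pullback $\sigma^*$ intertwines only the image $\sigma_*\Sh(\Lambda_g)$ and does not commute with general $t\in\Sh(\Lambda_f)$. Knowing $\sigma^*(t\cdot f)=\sigma^*(\sigma_*s\cdot f)$ says only that $u:=t-\sigma_*(s)$ satisfies $\sigma^*(u\cdot f)=0$. We need $\sigma^*(u t'\cdot f)=0$. I would use the symmetric hypothesis, with $\beta$, and commutativity:
\[
\sigma^*(ut'\cdot f)=\sigma^*(t'\cdot(u\cdot f)).
\]
Write $t'=\sigma_*(s')+u'$ with $\sigma^*(u'\cdot f)=0$. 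The term $\sigma^*(\sigma_*(s')u\cdot f)=s'\cdot\sigma^*(u\cdot f)=0$. What remains is $\sigma^*(uu'\cdot f)$, and showing it vanishes is the genuinely nontrivial point.

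If the direct approach fails, I would fall back on the analogue of \cite[Proposition~2.4]{kirichenko2012schubert}. There one takes $M_{f,g}$ to be the quotient $K_f/\mathcal I$, where $\mathcal I$ is the ideal generated by all $u$ with $\sigma^*(u\cdot f)=0$ (equivalently $\Ann(f)+\ker$ of the composite). With this choice, $\pi$ is the quotient map, which is multiplicative, so the required property is automatic. One then still has to check that $\iota\colon K_g\to K_f/\mathcal I$, $[s]\mapsto[\sigma_*s]$, is injective. Proving that injectivity is where the real work of this route lies, and I expect it to be the harder of the two routes to complete.
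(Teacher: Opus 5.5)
Your first construction is the paper's own. The map $t\mapsto\sigma^*(t\cdot f)$ identifies $\sigma^*(\Sh(\Lambda_f)\cdot f)$ with the paper's $\Sh(\Lambda_f)/I_{f,g}$, where $I_{f,g}=\{u\mid\sigma^*(u\cdot f)\equiv0\}$. Your $\pi$, your $\iota$ and the identity $s\cdot\sigma^*h=\sigma^*(\sigma_*(s)\cdot h)$ are exactly what the paper uses. The paper says multiplicativity ``follows directly'' from $\sigma_*$ being an algebra homomorphism, but that only shows $I_{f,g}$ is stable under multiplication by $\sigma_*\Sh(\Lambda_g)$. You correctly reduced the claim to $\sigma^*(uu'\cdot f)\equiv0$ for $u,u'\in I_{f,g}$ and left it open. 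This is a genuine gap, and it cannot be closed for this $M_{f,g}$, because the step is false. Take $\Lambda_f=\Z^2$, $\Lambda_g=\Z$, $\sigma(x)=(x,0)$ and $f(x,y)=y^2+1$, so $g\equiv1$ and $K_g=\kk$. Let $S=(0,1)\in\Sh(\Z^2)$, acting by $(S\cdot h)(x,y)=h(x,y+1)$, and put $u=S-S^{-1}$. Then $(u\cdot f)(x,y)=4y$, so $u\in I_{f,g}$, but $u^2\cdot f\equiv8$. With $\widetilde\alpha=\widetilde\beta=[1+u]$ and $\alpha=\beta=1$ you have $\pi(\widetilde\alpha)=\iota(\alpha)$ and $\pi(\widetilde\beta)=\iota(\beta)$. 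Yet $\pi(\widetilde\alpha\widetilde\beta)$ is the constant function $9$, while $\iota(\alpha\beta)$ is the constant $1$. The same failure occurs in the intended geometric setting. For the blow-up $p\colon\widetilde X\to\mathbb{P}^2$ at a point, the projection formula identifies $I_{f,g}/\Ann(f)$ with $\ker p_*\subset K_0(\widetilde X)$. The class $x=[\Om_E(E)]$ has $p_*x=0$ but $p_*(x^2)=-[\Om_{\mathrm{pt}}]$.

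Your fallback fails too. In the same example $S-2\in I_{f,g}$, since $((S-2)\cdot f)(x,0)=2-2=0$. But $S-1$ is nilpotent in $K_f$, so $S-2$ is a unit there. The ideal of $K_f$ generated by $I_{f,g}$ is therefore all of $K_f$, and $\iota\colon K_g=\kk\to0$ is not injective.

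What your computation does establish for the paper's $M_{f,g}$ is a one-sided statement. $M_{f,g}$ is a $\Sh(\Lambda_g)$-module and $\pi$, $\iota$ are $\Sh(\Lambda_g)$-linear. Hence $\pi(\widetilde\alpha\,[\sigma_*s'])=\iota(\alpha\,[s'])$ whenever $\pi(\widetilde\alpha)=\iota(\alpha)$: multiplicativity holds as soon as one of the two lifts is an honest $\sigma_*$-image.

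If you want the statement literally as written, shrink the kernel instead of enlarging it. Let $M=K_f/Z$, where $Z$ is the image of $\sigma_*(\Ann(g))$ in $K_f$. Then $\pi(\widetilde\alpha)=\iota(\alpha)$ forces $\widetilde\alpha=[\sigma_*s]$ for some representative $s$ of $\alpha$, so multiplicativity is immediate from $\sigma_*$ being a ring map. Also $\iota$ is injective: if $\sigma_*(s)\in\sigma_*(\Ann(g))+\Ann(f)$, then $s\cdot g\equiv0$ by your intertwining identity. The price is that only $\sigma_*$-images count as lifts. This version therefore no longer supports lifting face classes from a resolution, as in the Gelfand--Zetlin remark.
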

\begin{proof}
We define a group $M_{f,g}$ to be the quotient $\Sh(\Lambda_f)/I_{f,g}$ of $\Sh(\Lambda_f)$ by subgroup $I_{f,g}$, where 
\[
I_{f,g} = \{s \in \Sh(\Lambda_f)\,|\, \sigma^* (s\cdot f) \equiv 0\}.
\]
The ideal $\Ann(f)$ is clearly contained in $I_{f,g}$, hence we obtain an epimorphism $\pi\colon K_f\to M_{f,g}$. 

Now, let $s\in \Sh(\Lambda_g) $ be a shift operator and let $[s]$ be its class in $K_g$, we define the injection $\iota\colon K_g\to M_{f,g}$ via
\[
\iota([s]) = \sigma_*(s),
\]
where $\sigma_*\colon\Sh(\Lambda_g)\to \Sh(\Lambda_f)$ is a natural homomorphism of group algebras extending map of lattices $\sigma\colon\Lambda_g\to \Lambda_f$. The homomorphism $\iota$ is well defined since for every shift operator $s\in \Sh(\Lambda_g)$ and a function $h\colon\Lambda_f\to \kk$ we have the following identity:
\[
\sigma^*(\sigma_*(s)\cdot h)=s\cdot(\sigma^*\cdot h).
\]
Indeed, if $s\in \Ann(g)$, we get
\[
\sigma^*(\sigma_*(s) \cdot f) = s\cdot (\sigma^*\cdot f)=s\cdot g\equiv 0
\]
and hence $s\in I_{f,g}$, so $\iota([s])=0$. By a similar argument, $\iota$ is also injective. 

Finally, the equality $\pi(\widetilde \alpha \widetilde \beta) =\iota(\alpha \beta)$  for $\pi(\widetilde \alpha)=\iota(\alpha)$ and $\pi(\widetilde \beta)=\iota(\beta)$ follows directly from the fact that $\sigma_*\colon\Sh(\Lambda_g)\to \Sh(\Lambda_f)$ is a homomorphism of algebras.
\end{proof}

 \begin{remark}
 It is an interesting  question for which pairs $(\Lambda_g,g)$ and $(\Lambda_f,f)$ as above there exists a ring homomorphism $K_g \to K_f$ induced by a lattice map $\Lambda_g\to \Lambda_f$. A natural criterion which guaranties the existence of the ring homomorphism  $K_g \to K_f$ would allow to study the categories of discrete and continuous PK-algebras. This is especially interesting in the context of Riemann--Roch theory for PK-algebras. See \cite[Section 5.2]{dupraz} for examples and more detail.
 \end{remark}

\begin{remark}\label{rem:homology}
    Note that two elements $s,t\in \Sh(\Lambda)$ give  the same elements in the quotient $K_f= \Sh(\Lambda)/\Ann(f)$ if and only if 
    \[
    s\cdot f(\lambda) = t\cdot f(\lambda) \quad \text{for every } \lambda\in \Lambda.
    \]
    Therefore, one can think of elements of $K_f$ as functions on the lattice $\Lambda$ which can be obtained from $f$ by applying difference operators.

    In other words, the algebra $K_f$ is isomorphic as a module over $\Sh(\Lambda)$ to the cyclic submodule $M_f=\Sh(\Lambda)\cdot f$ of $\Fun(\Lambda, \kk)$ where the isomorphism is given by
    \[
    K_f\ni [s] \mapsto s\cdot f\in M_f.
    \]
    The same holds for the continuous version of the above construction. 
    In the applications to the computation of the cohomology rings or $K$-rings of algebraic varieties this can be interpreted as the Kronecker pairing. Indeed, the ring $K_f$ plays the role of a cohomology ring, and module $M_f$ plays the role of a homology group with the Kroneker pairing given by the natural action of $K_f$ on~$M_f$.
\end{remark}

\subsection{Rings with duality with different coefficients}\label{ssec:dualitywithcoefficients}
In this subsection we give a more general construction of algebras with duality with coefficients in a general abelian group. We will use this construction to reconstruct torus equivariant $K$-theory of smooth complete toric varieties in Subsection~\ref{ssec:eqK}.

Let $A$ be an abelian group, and let $R$ be a commutative ring. We will say that $\langle\cdot,\cdot\rangle$ is a \emph{non-degenerate pairing} on $R$ if for any $0\ne x\in R$ there is $y\in R$ such that $\langle x,y\rangle \ne 0$.
Similarly to Theorem~\ref{thm:diffalg} and~\ref{thm:goralgrec} we get the following results.
\begin{theorem}\label{thm:dualitywithcoef}
    Let $f\colon\Lambda \to A$ be any function. Then
$\Ann(f) = \{s\in \Sh(\Lambda) \,|\, s\cdot f \equiv 0\}$
is an ideal in $\Sh(\Lambda)$, and the quotient algebra $K_f := \Sh(\Lambda)/\Ann(f)$ has a Frobenius duality defined by a function
\[
\ell_f\colon \Sh(\Lambda)\to \kk,\quad \ell_f\colon t \mapsto t\cdot f(0).
\]
Moreover, if $R$ is a commutative ring generated by invertible elements $a_1,\ldots, a_s \in R^\times$ with Frobenius duality given by a function $\ell\colon R\to A$, then $R\simeq K_f$ with 
\[
f\colon \Z^s\to \kb,\quad f\colon (n_1,\dots,n_s)\mapsto \ell(a_1^{n_1}\ldots a_s^{n_s}).
\]
\end{theorem}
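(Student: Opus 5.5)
The plan is to repeat the proofs of Theorems~\ref{thm:diffalg} and~\ref{thm:goralgrec} almost verbatim, keeping track of where the field structure of $\kk$ was actually used. First I fix the setting. $\Sh(\Lambda)$ is taken with coefficients in $\Z$ (or in $\kk$ if $A$ is a $\kk$-vector space), and it acts on $\Fun(\Lambda,A)$ by the same formula $t\cdot f(x)=\sum_i a_i f(x+\lambda_i)$. This is an action by group endomorphisms of the abelian group $\Fun(\Lambda,A)$, and it makes $\Fun(\Lambda,A)$ a module over the commutative ring $\Sh(\Lambda)$. The functional is read as $\ell_f\colon \Sh(\Lambda)\to A$ (the target $\kk$ in the statement should be $A$), and nondegeneracy is meant in the sense just defined for $A$-valued pairings.

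\emph{Ideal and descent.} $\Ann(f)$ is the kernel of the module map $s\mapsto s\cdot f$, so it is an additive subgroup. If $s\in\Ann(f)$ and $s'$ is arbitrary, then $(s's)\cdot f=s'\cdot(s\cdot f)=0$, and since $\Sh(\Lambda)$ is commutative, $\Ann(f)$ is an ideal. For $s\in\Ann(f)$ we have $\ell_f(s)=(s\cdot f)(0)=0$, so $\ell_f$ descends to $K_f$.

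\emph{Nondegeneracy.} Take $[s]\ne 0$ in $K_f$. Then $s\cdot f\not\equiv 0$, so there is $x$ with $(s\cdot f)(x)\ne0$. Let $t$ be the shift operator sending $g$ to $g(\,\cdot+x)$, that is $t=T_{-x}$. Then $\ell_f(ts)=(s\cdot f)(x)\ne 0$. No division is used at any point, so the argument works for any abelian group $A$.

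\emph{Reconstruction.} This is the only step where some care is needed. I consider the ring surjection $\phi\colon \Z[T_1^{\pm1},\dots,T_s^{\pm1}]\to R$, $T_i\mapsto a_i$. It is surjective because $R$ is generated by the $a_i$ and their inverses; here I interpret "generated" as generated as a ring, or over $\kk$ if $R$ is a $\kk$-algebra, with $\Sh$ taken over the same base. Define $f(n)=\ell(a^n)$. By additivity of $\ell$ and the definition of the action,
\[
(u\cdot f)(n)=\ell\bigl(\phi(u)\,a^n\bigr)
\]
for every $u$. Hence $u\in\Ann(f)$ if and only if $\ell(\phi(u)a^n)=0$ for all $n\in\Z^s$. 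The monomials $a^n$ additively span $R$, so this holds if and only if $\ell(\phi(u)y)=0$ for all $y\in R$. By nondegeneracy of $\langle\cdot,\cdot\rangle_\ell$ this is equivalent to $\phi(u)=0$. Therefore $\ker\phi=\Ann(f)$, and $\phi$ induces an isomorphism $K_f\simeq R$. Under this isomorphism $\ell_f$ corresponds to $\ell$, since $\ell_f(u)=(u\cdot f)(0)=\ell(\phi(u))$. The only point to check is that integer (or $\kk$-) linear combinations of monomials exhaust $R$, which is exactly the generation hypothesis.
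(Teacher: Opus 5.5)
Your proposal is correct and follows the paper's route. The paper gives no separate argument for this statement and only says that the proofs of Theorems~\ref{thm:diffalg} and~\ref{thm:goralgrec} carry over, which is what you do. Your identity $(u\cdot f)(n)=\ell(\phi(u)a^n)$ makes explicit the paper's step $\ker\langle\cdot,\cdot\rangle_f=\Ann(f)$, and your adjustments correctly repair the typos in the statement: taking $\Sh(\Lambda)$ over $\Z$ (or over $\kk$ when $A$ is a $\kk$-vector space), and reading the targets of $\ell_f$ and $f$ as $A$ rather than $\kk$.
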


\begin{proposition}\label{prop:basechange}
 Let $A,B$ be two abelian groups and let $\phi:A\to B$ be a group homomorphism. Finally, let $f\colon\Lambda\to A$ be any function, and let $K_f,K_{\phi\circ f}$ be corresponding rings with duality valued in $A$ and $B$ respectively. Then there exists a group homomorphism
 \[
  \pi\colon K_f\to K_{\phi\circ f} \text{ such that } \phi\left(\langle x, y \rangle_f\right) = \langle \pi(x),\pi(y)\rangle_{\phi\circ f} 
 \]
 for every $x,y\in R_f$.
\end{proposition}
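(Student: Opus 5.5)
The plan is to define $\pi$ as the map induced by the identity of $\Sh(\Lambda)$. The first step is to show that $\Ann(f)\subseteq\Ann(\phi\circ f)$, so that $\pi\colon K_f=\Sh(\Lambda)/\Ann(f)\to K_{\phi\circ f}=\Sh(\Lambda)/\Ann(\phi\circ f)$, $[s]\mapsto[s]$, is well defined.

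The key observation is that the action of shift operators commutes with post-composition by $\phi$. We should note that the natural coefficients for $\Sh(\Lambda)$ acting on $A$-valued functions are integers, i.e.\ $\Sh(\Lambda)=\Z[\Lambda]$, since $A$ is only an abelian group. Then for $t=\sum_i a_i\lambda_i$ with $a_i\in\Z$ and every $x\in\Lambda$, additivity of $\phi$ gives
\[
t\cdot(\phi\circ f)(x)=\sum_i a_i\,\phi\bigl(f(x+\lambda_i)\bigr)=\phi\Bigl(\sum_i a_i f(x+\lambda_i)\Bigr)=\phi\bigl((t\cdot f)(x)\bigr).
\]
In other words, $t\cdot(\phi\circ f)=\phi\circ(t\cdot f)$. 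Hence if $t\cdot f\equiv 0$, then $t\cdot(\phi\circ f)\equiv 0$, which is the desired inclusion of annihilators. Consequently $\pi$ is well defined. Moreover it is a surjective ring homomorphism, and in particular a group homomorphism, since both rings are quotients of the same ring $\Sh(\Lambda)$.

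For the compatibility with the pairings, we recall that $\langle x,y\rangle_f=\ell_f(xy)=(st\cdot f)(0)$ for representatives $x=[s]$ and $y=[t]$. Applying the displayed identity to the operator $st$ and evaluating at $0$ gives
\[
\phi\bigl(\langle x,y\rangle_f\bigr)=\phi\bigl((st\cdot f)(0)\bigr)=(st\cdot(\phi\circ f))(0)=\langle\pi(x),\pi(y)\rangle_{\phi\circ f}.
\]
Here we use $\pi(x)\pi(y)=[st]$. Independence of the choice of representatives follows from Theorem~\ref{thm:dualitywithcoef}, which guarantees that $\ell_f$ and $\ell_{\phi\circ f}$ descend to the quotients.

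There is no serious obstacle in this argument. The only point needing care is the coefficient ring. If one insists on $\kk$-coefficients, then $A$ and $B$ must be $\kk$-vector spaces and $\phi$ must be $\kk$-linear for the computation above to go through. Otherwise one works over $\Z[\Lambda]$ as described. In either setting the rest of the proof is the formal computation given above.
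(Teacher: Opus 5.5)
Your proof is correct and follows the route the paper intends. The paper states Proposition~\ref{prop:basechange} without proof, but its later use (deducing $\Ann(\sigma)\subset\Ann(\Ehr)$ from it in the equivariant toric corollary) shows that $\pi$ is meant to be induced by the identity of $\Sh(\Lambda)$ through the inclusion $\Ann(f)\subseteq\Ann(\phi\circ f)$, which is exactly what your identity $t\cdot(\phi\circ f)=\phi\circ(t\cdot f)$ establishes. Your remark on coefficients is also apt: the computation needs $\Sh(\Lambda)=\Z[\Lambda]$, or else $\kk$-linearity of $\phi$, a point the paper glosses over when it writes a $\kk$-valued $\ell_f$ in Theorem~\ref{thm:dualitywithcoef}.
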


We will use Proposition~\ref{prop:basechange} for the computation of $T$-equivariant cohomology of toric and flag varieties. In particular, in that case we have $A=K_0^T(\pt) = \Z[M]$ and $B=K_0(\pt)\simeq \Z$ with $\phi\colon K_0^T(\pt)\to K_0(\pt)$ being the  forgetful morphism:
\[
\phi\colon\sum k_i\lambda_i\mapsto \sum k_i.
\]

\section{Preliminaries on polytopes}\label{sec:polyprelim}

In this section we collect the geometric background needed for the
constructions in the rest of the paper.
We recall the formalism of virtual polytopes and convex chains,
introduced by Pukhlikov and Khovanskii in~\cite{PK92}, as well as
the notion of a valuation on the space of polytopes and its principal
example, the Ehrhart polynomial.
We then introduce the integer point transform and its projected
version, which serve as the equivariant analogue of the Ehrhart
polynomial in the description of equivariant $K$-theory.
The section concludes with Brion's theorem, which is the key tool
for extending identities between the projected integer point transform
and the equivariant Euler characteristic from a full-dimensional cone
in the weight lattice to the whole of~$\Lambda$
(see the proof of Proposition~\ref{prop:virtual equality}).

Throughout this section the convex-chain and valuation constructions
are carried out over~$\R$, while the integer point transform and the
shift-operator constructions of Section~\ref{sec:PKalgebras} live
over~$\Z$ (or~$\Q$ after tensoring).
The passage between the two settings is made precise in
Subsection~\ref{ssec:ehrhart}: the Ehrhart polynomial is an
integer-valued function on the lattice~$\Pm_\Sigma$ of virtual
polytopes, and it is this integrality that makes the discrete
Pukhlikov--Khovanskii machinery of Section~\ref{sec:PKalgebras}
applicable.

The reader familiar with the theory of virtual polytopes and valuations
may wish to go directly to Subsection~\ref{ssec:linfamilies}, where we
introduce the central geometric objects of the paper --- linear families
of polytopes and their $K$-rings --- which serve as polyhedral models
for the $K$-theory of toric varieties and flag varieties studied in
Sections~\ref{sec:polymodels}--\ref{sec:flags}.

A detailed exposition of virtual polytopes and convex chains can be
found in the survey~\cite{panina} by G.\,Panina and I.\,Streinu.

% ------------------------------------------------------------
\subsection{Virtual polytopes}\label{ssec:virtpoly}
% ------------------------------------------------------------

We call a polytope $P\subset \R^d$ \emph{integral} if all vertices of
$P$ belong to the integer lattice $\Z^d\subset \R^d$.
Denote by $\Pm^+$ the set of all integral polytopes in $\R^d$.
The set $\Pm^+$ has a structure of an abelian semigroup with respect to
Minkowski addition:
\[
P + Q = \{x+y\mid x\in P,\quad y\in Q\}.
\]
It is easy to check that $\Pm^+$ has the cancellation property, i.e.
\[
P_1 +Q = P_2 +Q \quad \text{if and only if} \quad P_1 = P_2.
\]
Thus $\Pm^+$ embeds into its Grothendieck group, which we denote
by~$\Pm$, i.e.\ the group of formal differences of elements of~$\Pm^+$.
The space of integral virtual polytopes~$\Pm$ is a free abelian group.

A virtual polytope $P$ is uniquely described by its
\emph{support function} $H_P\colon (\R^d)^\vee \to \R$ given by
\[
H_Q(\psi) := \min_{x\in Q}\psi(x)
\]
for a convex polytope $Q$, and extended by linearity to virtual
polytopes.
For an integral virtual polytope~$P$, the support function~$H_P$ is a
piecewise linear function that attains integer values on
$(\Z^d)^\vee$.
The cones of linearity of $H_P$ form a fan in $(\R^d)^\vee$.
For a given fan $\Sigma\subset (\R^d)^\vee$, let us denote by
$\Pm_\Sigma^+$ the set of integral convex polytopes~$P$ with $H_P$
linear on the cones of~$\Sigma$.
Similarly, by~$\Pm_\Sigma$ we denote the set of integral virtual
polytopes with support function linear on the cones of~$\Sigma$.
Clearly $\Pm_\Sigma$ is a free abelian group and $\Pm_\Sigma^+$ is its
subsemigroup.

Let $e_1,\ldots,e_r$ be the primitive ray generators of~$\Sigma$.
A virtual polytope in~$\Pm_\Sigma$ is then uniquely determined by
the values of~$H_P$ on~$e_1,\ldots,e_r$. This defines an embedding
$\Pm_\Sigma\hookrightarrow \Z^r$ as a sublattice.
Note that $\Pm_\Sigma$ is in general a proper sublattice of $\Z^r$:
it can have smaller rank and need not be saturated.

% ------------------------------------------------------------
\subsection{Convex chains and the McMullen algebra}\label{ssec:convchains}
% ------------------------------------------------------------

Consider a convex polytope $P$ in $\R^n$, not necessarily of maximal
dimension.
Its \emph{characteristic function} $\I_P\colon \R^n \to \{0,1\}$ is
defined by
\[
\I_P(x)=\begin{cases} 1,& x\in P,\\ 0, & x\notin P.\end{cases}
\]
A \emph{convex chain} is a function $\alpha\colon \R^n\to \R$ that can
be presented as a finite linear combination of characteristic functions
of polytopes with real coefficients:
\[
\alpha=\sum_{i=1}^N \alpha_i \I_{P_i},\qquad\alpha_i\in \R.
\]
Such a decomposition is not unique; a convex chain is a function on
$\R^n$, rather than a formal linear combination of polytopes.

Convex chains can be added as functions on $\R^n$.
Moreover, they form an algebra with respect to the
\emph{convolution product}~$*$: the product of two characteristic
functions of convex polytopes $P$ and $Q$ is the characteristic
function of their Minkowski sum,
\[
(\I_P * \I_Q)(x)=\I_{P+Q}(x),
\]
extended to all convex chains by linearity.
A non-trivial fact, proven in~\cite{PK92}, is that the product so
defined does not depend on the choice of presentation as a combination
of characteristic functions.
This turns the vector space of convex chains into a commutative
$\R$-algebra whose identity element is $\I_O$, the characteristic
function of the origin $O\in\R^n$.

For every convex polytope $P$, the element $\I_P$ is invertible in
this algebra. This fact is known under the names of \emph{Minkowski inversion}, or \emph{Ehrhart--Macdonald reciprocity}, see {\cite[Theorem~2]{PK92}}, {\cite[Theorem~4.1]{BeckRobins15}}.

\begin{theorem}[Minkowski inversion]
The inverse of $\I_P$ is given by
\[
(\I_P)^{* -1}(x)=(-1)^{\dim P}
        \I_{\mathrm{RInt}(\mathrm{Symm}\,P)},
\]
where $\mathrm{Symm}\,P$ is the polytope symmetric to $P$ with respect
to the origin, and $\mathrm{RInt}$ denotes the relative interior.
In terms of the M\"obius function of the face poset of~$P$:
\[
(\I_P)^{* -1}=\sum_{Q\subset P}\mu(Q)\I_Q,
\]
where $Q$ runs over the faces of~$P$.
\end{theorem}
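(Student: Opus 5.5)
We need to prove Minkowski inversion: the inverse of $\I_P$ under convolution is $(-1)^{\dim P}\I_{\mathrm{RInt}(\mathrm{Symm}\,P)}$, equivalently $\sum_{Q\subset P}\mu(Q)\I_Q$.

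The plan is to reduce to a computation with the Euler characteristic. The key tool is the Euler characteristic valuation $\chi$ on convex chains: the unique linear functional with $\chi(\I_Q)=1$ for every nonempty compact convex polytope $Q$. Its existence is Hadwiger's theorem, which is also what underlies the well-definedness of the convolution product proved in~\cite{PK92}. For convex chains $\alpha,\beta$ we can then write
\[
(\alpha*\beta)(x)=\chi\bigl(y\mapsto \alpha(y)\,\beta(x-y)\bigr).
\]
On characteristic functions this is clear: $\I_P(y)\I_Q(x-y)$ is the indicator of $P\cap(x-Q)$, which is nonempty and convex exactly when $x\in P+Q$. The identity then extends to all chains by linearity, and it gives a formula for the product that does not depend on the chosen presentations.

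The first step is to rewrite the right-hand side. By inclusion–exclusion over the face lattice, the indicator of a relatively open polytope is $\I_{\mathrm{RInt}\,Q}=\sum_{F\subseteq Q}(-1)^{\dim Q-\dim F}\I_F$, where $F$ runs over the nonempty faces of $Q$. This is Möbius inversion on the Eulerian face lattice. Applying it to $\mathrm{Symm}\,P$ identifies $(-1)^{\dim P}\I_{\mathrm{RInt}(\mathrm{Symm}\,P)}$ with the Möbius-function expression in the statement, with faces reflected through the origin. So the two formulas agree, and it suffices to prove the first one.

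The second step is to show $\I_P*\beta=\I_O$ for $\beta=(-1)^{\dim P}\I_{\mathrm{RInt}(-P)}$. Using the $\chi$-formula above, we have
\[
(\I_P*\beta)(x)=(-1)^{\dim P}\chi\bigl(\I_{P\cap(x+\mathrm{RInt}P)}\bigr),
\]
where we used $x-\mathrm{RInt}(-P)=x+\mathrm{RInt}\,P$. There are two cases.
\begin{itemize}
\item If $x=0$, the set is $\mathrm{RInt}\,P$. An open cell of dimension $d=\dim P$ has $\chi=(-1)^d$, which follows from the face decomposition in the first step together with $\sum_F(-1)^{\dim F}=1$ for a polytope. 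The value at $0$ is therefore $1$.
\item If $x\neq0$, the set $S=P\cap(x+\mathrm{RInt}\,P)$ is convex, but it is neither closed nor open in general. We need $\chi(\I_S)=0$. Here we use a sweeping (Morse-type) argument. Choose a linear functional $\psi$ on the affine span of $P$ that is positive on $x$ and generic. The set $x+\mathrm{RInt}\,P$ contributes its open side, and $P$ its closed side. Consider the fibers of $S$ over the values $t$ of $\psi$. By Fubini for $\chi$ (the Euler integral along $\psi$), $\chi(S)=\sum_t\bigl(\chi(S_t)-\lim_{s\to t^-}\chi(S_s)\bigr)$. The minimum of $\psi$ on $S$ is not attained, because the minimal face of $x+P$ in direction $\psi$ lies on its boundary and is excluded, while $P$ is closed. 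Hence no fiber starts a new component, so $\chi(S)=0$.
\end{itemize}
If $x$ lies outside the affine span of $P$ (with $P$ assumed to contain the origin, or after the appropriate translation), then $S$ is empty and the value is $0$.

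I expect the main obstacle to be the $x\neq0$ case: proving rigorously that this half-open convex set has $\chi=0$. A cleaner alternative is to triangulate $P$ into simplices and check the identity simplex by simplex, using multiplicativity. Alternatively, one can restrict to the line through $0$ and $x$ and reduce to the one-dimensional fact that a half-open interval has $\chi=0$, via the Fubini property of $\chi$. I would try the Fubini reduction first.
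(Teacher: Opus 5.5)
The paper gives no proof of this statement; it only cites Pukhlikov--Khovanskii and Beck--Robins, so your proposal has to stand on its own. Most of it does. The following steps are all correct:
the formula $(\alpha*\beta)(x)=\chi\bigl(y\mapsto\alpha(y)\beta(x-y)\bigr)$;
the inclusion--exclusion expression for $\I_{\mathrm{RInt}\,Q}$;
the reduction to $(-1)^{\dim P}\chi(\I_S)$ with $S=P\cap(x+\mathrm{RInt}\,P)$;
the case $x=0$ via the Euler relation;
and the case $x\notin L$, where $L$ is the direction space of $\mathrm{aff}\,P$.
You are also right that the M\"obius form must be read with reflected faces, $(\I_P)^{*-1}=\sum_Q(-1)^{\dim Q}\I_{-Q}$. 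As printed, with $\I_Q$, it can only hold when $P=-P$: for $P=[1,2]$ the printed right-hand side is supported on $[1,2]$, so its product with $\I_P$ is supported on $[2,4]$.

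\textbf{The gap is the case $x\in L\setminus\{0\}$.} Your sweeping argument deduces $\chi(S)=0$ from the fact that $\min_S\psi$ is not attained. In effect it treats $\chi(S_t)$ as $1$ or $0$ according to whether the slice is nonempty. That holds for closed convex sets, but not for the combinatorial Euler characteristic of non-closed ones. An open square also has an unattained minimum, yet $\chi=+1$; the contribution comes from the top level, where the slices (open intervals, $\chi=-1$) disappear. Your slices $S_t=P_t\cap(x+\mathrm{RInt}\,P)_t$ intersect a closed polytope with a relatively open one, and these are not translates of each other. So $\chi(S_t)$ is not determined by nonemptiness and can jump at interior levels, and nothing in your sketch controls this.

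\textbf{The fix} is the Fubini idea you list last, but applied to the fibration of $S$ by \emph{all} lines parallel to $x$, not just the line through $0$ and $x$.
First, for the projection $\pi$ along $x$ put $(\pi_*f)(w)=\chi(f\cdot\I_{\pi^{-1}(w)})$. Since $\pi_*\I_Q=\I_{\pi(Q)}$ for closed polytopes $Q$, linearity gives $\chi(\pi_*f)=\chi(f)$ for every convex chain $f$.
Next, take a line $\ell=\{p+sx\}$ that meets $\mathrm{RInt}\,P$. It then lies in $\mathrm{aff}\,P$, the set $P\cap\ell$ corresponds to $s\in[a,b]$ with $a<b$, and $\mathrm{RInt}\,P\cap\ell$ corresponds to $s\in(a,b)$. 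Since $\ell-x=\ell$, the fiber $S\cap\ell$ corresponds to $[a,b]\cap(a+1,b+1)$. This is either empty or the half-open interval $(a+1,b]$, and both have $\chi=0$.
If instead $\ell$ misses $\mathrm{RInt}\,P$, then $S\cap\ell=\emptyset$.
Hence $\pi_*\I_S\equiv 0$ and $\chi(\I_S)=0$, which completes the proof.

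Your triangulation alternative is not obviously workable, since convolution inverses do not add up over a subdivision of $P$.
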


\begin{corollary}
The algebra of convex chains contains a multiplicative subgroup
isomorphic to the group of virtual polytopes, formed by all
characteristic functions $\I_P$ where $P$ is a virtual polytope.
\end{corollary}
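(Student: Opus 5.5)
The plan is to realize the map $P\mapsto \I_P$, defined on honest polytopes, as a homomorphism of semigroups $\Pm^+\to(\mathcal C,*)^\times$, where $\mathcal C$ denotes the algebra of convex chains. I will then extend it to the Grothendieck group $\Pm$ by the universal property, and finally check that the extension is injective.

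\textbf{Step 1: the semigroup map.} By the definition of the convolution product, $\I_P*\I_Q=\I_{P+Q}$ for integral polytopes $P,Q$, and $\I_{\{0\}}=\I_O$ is the identity. By the Minkowski inversion theorem each $\I_P$ is invertible in $\mathcal C$, with explicit inverse $(-1)^{\dim P}\I_{\mathrm{RInt}(\mathrm{Symm}\,P)}$. Hence $P\mapsto\I_P$ is a homomorphism from the commutative cancellative monoid $\Pm^+$ into the abelian group of units of $\mathcal C$.

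\textbf{Step 2: extension to virtual polytopes.} By the universal property of the Grothendieck group, the map extends uniquely to a group homomorphism $\Phi\colon\Pm\to\mathcal C^\times$, namely $\Phi(P-Q)=\I_P*(\I_Q)^{*-1}$. This is well defined: if $P-Q=P'-Q'$, then $P+Q'=P'+Q$, so $\I_P*\I_{Q'}=\I_{P'}*\I_Q$, and multiplying by the inverses gives equality. For a virtual polytope $P$ we set $\I_P:=\Phi(P)$, and the image of $\Phi$ is the subgroup described in the statement.

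\textbf{Step 3: injectivity.} This is the only point needing an argument. Suppose $\Phi(P-Q)=\I_O$, that is, $\I_P=\I_Q$ as functions on $\R^n$. Since $P$ and $Q$ are honest polytopes, they are the supports of these two equal functions, so $P=Q$ and the kernel of $\Phi$ is trivial. Note that Step 3 uses only honest polytopes, because every element of $\Pm$ is a difference $P-Q$ of elements of $\Pm^+$, so reducing to the identity $\I_P=\I_Q$ suffices.

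The main conceptual input, which we take as given, is the well-definedness of the convolution product from \cite{PK92} together with Minkowski inversion. Given those, the corollary is formal, and the mild obstacle is checking that the group-completion extension is well defined and injective, as done in Steps 2 and 3.
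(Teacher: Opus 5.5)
Your proposal is correct and matches the route the paper intends: the paper gives no separate proof and treats the corollary as immediate from Minkowski inversion, since invertibility of each $\I_P$ lets $P\mapsto\I_P$ extend from $\Pm^+$ to its Grothendieck group $\Pm$. Your Step 3, which proves injectivity by comparing supports ($\I_P=\I_Q$ forces $P=Q$), fills in a detail the paper leaves implicit.
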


Moreover, the group of virtual polytopes ``almost'' coincides with the
full group of invertible elements of the convex-chain algebra:

\begin{theorem}[\cite{PK92}]
Each invertible convex chain~$\alpha$ is a real scalar multiple of the
characteristic function of a virtual polytope.
For an invertible integer convex chain~$\alpha$, either~$\alpha$
or~$-\alpha$ is the characteristic function of a virtual integral
polytope.
\end{theorem}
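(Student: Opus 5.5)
We argue by induction on the ambient dimension $n$. The main tool is a family of ``face'' operators on convex chains. For a covector $\psi\in(\R^n)^\vee$ and a chain $\alpha$, let $\alpha^\psi$ be the \emph{leading part} of $\alpha$ in direction $\psi$. Concretely, for $x$ in the hyperplane $\{\psi=c\}$ we set $\alpha^\psi(x)$ equal to the value at $x$ of the slice of $\alpha$ by that hyperplane, where $c$ is the minimal level at which such slices are nonzero. The precise version uses the Euler-characteristic (Hadwiger) integral on the strip $c\le\psi\le c+\varepsilon$, so that $\alpha^\psi$ is again a convex chain. For characteristic functions one gets $(\I_P)^\psi=\I_{P^\psi}$, where $P^\psi$ is the face of $P$ on which $\psi$ attains its minimum.

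The first step is to check two facts by linearity, reducing to characteristic functions of polytopes. First, $(\alpha*\beta)^\psi=\alpha^\psi*\beta^\psi$, because $(P+Q)^\psi=P^\psi+Q^\psi$. Second, the Euler characteristic $\chi(\alpha)=\sum\alpha_i$, with $\alpha=\sum\alpha_i\I_{P_i}$, is a ring homomorphism to $\R$ satisfying $\chi(\alpha^\psi)=\chi(\alpha)$. Independence of presentation follows from the same argument in \cite{PK92} that makes $*$ well defined. Consequently, if $\alpha$ is invertible then $c:=\chi(\alpha)\ne0$, and every $\alpha^\psi$ is invertible in the chain algebra of the affine hyperplane containing it. By the induction hypothesis, applied after translating that hyperplane to a linear one, we get $\alpha^\psi=c_\psi\I_{P_\psi}$ for a virtual polytope $P_\psi$, and $c_\psi=\chi(\alpha^\psi)=c$. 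For generic $\psi$ the chain $\alpha^\psi$ is $c$ times a point mass $\delta_{v(\psi)}$. The map $\psi\mapsto\psi(v(\psi))$ is then piecewise linear on a complete fan, namely the common refinement of the normal fans of the $P_i$.

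Next we show that $H(\psi):=\psi(v(\psi))$ is the support function of a virtual polytope $P$. It suffices to present $H$ as a difference of support functions of honest polytopes. We expect this to follow from the compatibility of the $P_\psi$ along walls, which is guaranteed by the inductive description of lower-dimensional faces. We then form $\beta:=\alpha*(c\,\I_P)^{*-1}$; this is legitimate because virtual polytopes are invertible by Minkowski inversion and the corollary above. Multiplicativity of leading parts gives $\beta^\psi=\delta_0$ for all generic $\psi$. Finally we prove a rigidity lemma: a chain $\gamma$ with $\gamma^\psi=0$ for all generic $\psi$ is zero. To see this, take a vertex $v$ of the convex hull of the support of $\gamma$ and a $\psi$ exposing $v$; the local Euler characteristic of $\gamma$ at $v$ is nonzero, so $\gamma^\psi\ne0$. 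Applying the lemma to $\gamma=\beta-\I_O$ yields $\beta=\I_O$, hence $\alpha=c\,\I_P$.

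For the integral statement, $\alpha$ and $\alpha^{*-1}$ both have integer coefficients, so $c$ and $c^{-1}=\chi(\alpha^{*-1})$ are integers and $c=\pm1$. The vertices $v(\psi)$ are vertices of the integral polytopes appearing in a presentation of $\alpha$, so $P$ is integral.

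The step we expect to be the main obstacle is well-definedness of the leading-part operator on chains, independent of presentation, together with its multiplicativity. We would handle it by expressing $\alpha^\psi$ through Euler-characteristic integration, which is additive and compatible with Minkowski sums as in \cite{PK92}. A secondary difficulty is showing that $H$ is a \emph{virtual} support function. Piecewise linearity alone suffices here: any piecewise linear function on a complete rational fan, integral on lattice points, is a difference of two convex such functions, obtained by adding a sufficiently convex support function.
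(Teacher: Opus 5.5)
The paper does not prove this statement; it quotes it from Pukhlikov--Khovanskii. Judged on its own, your argument breaks at the operator everything rests on.

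\textbf{The leading-part operator fails.} Your $\alpha^\psi$ is the slice at the lowest level where slices are nonzero. It is not linear, so neither multiplicativity nor $\chi(\alpha^\psi)=\chi(\alpha)$ can be checked ``by linearity'' on polytopes, and both are false.
\begin{itemize}
\item The second fails already for $\I_{\{(0,0)\}}+\I_{\{(0,1)\}}$.
\item The first fails because the chain algebra has zero divisors, e.g.\ $\I_{[0,1)}*\I_{(0,1]}=\I_{[0,2]}-\I_{[0,1]}-\I_{[1,2]}+\I_{\{1\}}=0$ on $\R$. In $\R^2$ with $\psi=y$, put $e=\I_{[0,1)\times\{0\}}$ and $f=\I_{(0,1]\times\{0\}}$. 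The chains $e+\I_{\{(0,1)\}}$ and $f+\I_{\{(0,1)\}}$ have leading parts $e,f$ with $e*f=0$. Their product $\I_{\{(0,1)\}}*(e+f)+\I_{\{(0,2)\}}$ nevertheless has nonzero leading part on $y=1$.
\item The operator is undefined exactly where you apply it. $\I_{[0,1]^2}^{*-1}=\I_{(-1,0)^2}$ has nonzero slices precisely for $y\in(-1,0)$, so there is no lowest level, and a strip integral at $y=-1$ gives $0$.
\item Any multiplicative operator with $(\I_P)^\psi=\I_{P^\psi}$ must send this inverse to $\I_{[0,1]\times\{0\}}^{*-1}=-\I_{(-1,0)\times\{0\}}$. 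That chain lies on a line where $\I_{(-1,0)^2}$ vanishes, so no slice-type definition can work.
\end{itemize}

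\textbf{Consequences.} The inductive claim that $\alpha^\psi$ is invertible in the chain algebra of a hyperplane is unfounded. So is the wall compatibility you defer to it, and that compatibility is the real content. A function that is linear on each maximal cone of $\Sigma$ is a virtual support function only if it is continuous across walls; your closing remark on piecewise linearity presupposes this. The rigidity argument is also wrong as stated: for $\gamma=\I_{(0,1]}$ the vertex $0$ is exposed by $\psi=+1$, yet $\gamma$ has value $0$ and local Euler characteristic $0$ there.

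\textbf{A repair.} Use the \emph{linear} face map $\Phi_\psi\bigl(\sum a_i\I_{P_i}\bigr)=\sum a_i\I_{P_i^\psi}$. It is well defined because
$\Phi_\psi(\alpha)(x)=\alpha(x)-\chi\bigl(\alpha\cdot\I_{B_\varepsilon(x)\cap\{\psi=\psi(x)-\delta\}}\bigr)$ for $0<\delta\ll\varepsilon\ll1$. It is a unital algebra endomorphism preserving $\chi$. Its image is spread over several parallel hyperplanes, so your induction does not apply to it directly, but none is needed.
\begin{itemize}
\item Take $\psi$ generic with respect to presentations of $\alpha$ and $\alpha^{*-1}$. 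Then $\Phi_\psi$ lands in the group algebra $\R[\R^n]$, whose only units are $c\,\delta_v$.
\item For $\psi_0$ in the relative interior of a wall and $\psi_1$ generic, $\Phi_{\psi_0\pm\epsilon\psi_1}=\Phi_{\pm\psi_1}\circ\Phi_{\psi_0}$ on the finitely many polytopes involved. Also $\Phi_{\pm\psi_1}$ sends the piece of $\Phi_{\psi_0}(\alpha)$ on each hyperplane $\{\psi_0=t\}$ to a chain on the same hyperplane with the same Euler characteristic. Hence, from both sides, $\psi_0(v)$ is the unique $t$ where that piece has nonzero Euler characteristic. This is exactly the continuity of $H$.
\item Rigidity follows from the identity $\I_P(x)=(-1)^{n-1}\bigl(\chi\{\psi\in S^{n-1}:\psi(x)\ge H_P(\psi)\}-1\bigr)$. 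Extended linearly, it writes $\gamma(x)$ in terms of $\chi(\gamma)$ and the level-wise Euler characteristics of the $\Phi_\psi(\gamma)$. If $\Phi_\psi(\gamma)=0$ for all generic $\psi$, the previous item makes all of these vanish for every $\psi$, so $\gamma=0$.
\end{itemize}
Your integrality paragraph is fine once the real statement is established.
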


In what follows we will mostly work with \emph{integer convex chains},
defined as integer linear combinations of characteristic functions
$\I_P\colon \Z^n\to \Z$ (equivalently, convex chains with integer
coefficients identified whenever they agree on the lattice~$\Z^n$).
This ring contains a multiplicative subgroup of integral virtual
polytopes.

Note that in these constructions the convex chains corresponding to a
polytope~$P$ and its translate~$P+v$ are distinct.
Identifying them defines a different construction, known as the
\emph{McMullen polytope ring}.

\begin{definition}
The \emph{McMullen polytope ring} is the $\R$-vector space generated by
elements $[P]$, $P\in\Pm^+$, subject to the relations
\begin{equation}\label{eq:mm1}
[P]+[Q]=[P\cap Q]+[P\cup Q]
\end{equation}
for all $P,Q\in\Pm^+$ such that $P\cup Q$ is again a convex integral
polytope, and
\begin{equation}\label{eq:mm2}
[P]=[P+v]
\end{equation}
for every translation vector $v\in\R^n$.
Multiplication is given by Minkowski sum,
$[P]\cdot [Q]=[P+Q]$, extended by linearity.
\end{definition}

The identity element is the class of a point.
There is a natural surjective homomorphism from the convex-chain ring to
the McMullen polytope ring, sending $\I_P$ to $[P]$ for
$P\in\Pm^+$; see~\cite[Sec.~4.1--4.2]{panina}.

\begin{remark}\label{rem:mcmullen-vs-Sigma}
When we restrict attention to a fixed fan~$\Sigma$ and impose
translation-invariance, the lattice~$\Pm_\Sigma$ of virtual polytopes
with normal fan refining~$\Sigma$ embeds as a multiplicative subgroup
of the McMullen polytope ring.
It is this subgroup on which the Ehrhart polynomial and the shift
operators of Section~\ref{sec:PKalgebras} interact: the
translation-invariance of~$\Ehr$ (used crucially in the proof of
Theorem~\ref{thm:polalgrel} via the ideal~$J$) corresponds precisely
to the relation~\eqref{eq:mm2} in the McMullen ring.
\end{remark}

% ------------------------------------------------------------
\subsection{Valuations and the Ehrhart polynomial}\label{ssec:ehrhart}
% ------------------------------------------------------------

Let $A$ be an $\R$-vector space.

\begin{definition}
An $A$-valued function $\varphi$ on the set of convex polytopes is
called a \emph{valuation} if, whenever $P$, $Q$, and $P\cup Q$ are
convex polytopes, we have
\[
\varphi(P)+\varphi(Q)=\varphi(P\cap Q)+\varphi(P\cup Q).
\]
\end{definition}

We will mostly consider \emph{translation-invariant} valuations,
satisfying $\varphi(P)=\varphi(P+v)$ for every $v\in \R^n$.
Every valuation extends to the convex-chain algebra by
\[
\varphi\!\left(\sum a_i\,\I_{P_i}\right)=\sum a_i\varphi(P_i),
\]
and since virtual polytopes are convex chains of a special form, this
gives a well-defined extension of~$\varphi$ to all of~$\Pm$.

The two most standard examples of translation-invariant valuations are
the volume $\vol(P)$ and, for integral polytopes, the lattice-point
count $|P\cap\Z^n|$.

As shown by McMullen, translation-invariant valuations behave polynomially with respect to Minkowski addition:

%\todoillm{That statement is not true. take the integral of aritrary function $f$ over $P$ this is valuation but not a polynomial one. The mcmullen's theorem is about translation invariant valuations I guess. And in Pukhlikov-khovanskii they studied more general polynomial valuations.}
\begin{theorem}[{\cite[Theorem~6]{McMullen77}}]\label{thm:mcmullen}
Let $\varphi$ be a translation-invariant valuation.
\begin{enumerate}[label=\normalfont(\roman*)]
\item For any convex polytopes $P_1,\dots,P_n\subset \R^n$ and
      $\lambda_1,\dots,\lambda_n\in\R_{>0}$, the function
      \[
      F_\varphi(\lambda_1,\dots,\lambda_n)
        =\varphi(\lambda_1P_1+\dots+\lambda_nP_n)
      \]
      is a polynomial in $\lambda_1,\dots,\lambda_n$.
\item The analogous statement holds for integral polytopes and
      nonneg\-ative integers $\lambda_1,\dots,\lambda_n$.
\end{enumerate}
Here $\lambda P$ for positive real $\lambda$ denotes the image of $P$
under dilation by~$\lambda$.
\end{theorem}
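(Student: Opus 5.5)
The plan is to reduce McMullen's theorem to the one-variable case and then handle the general case with finite differences. Both reductions rest on the inclusion--exclusion structure of valuations.

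\emph{Step 1: extend $\varphi$ to convex chains.} Using the extension of $\varphi$ to convex chains recalled above, I view $\varphi$ as a linear functional on the convex-chain algebra. Then $F_\varphi(\lambda_1,\dots,\lambda_n)=\varphi(\I_{P_1}^{*\lambda_1}*\cdots*\I_{P_n}^{*\lambda_n})$ for nonnegative integers $\lambda_i$.

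\emph{Step 2: the integral case (ii).} By Theorem~\ref{thm:goralgrec}-style reasoning (compare Proposition~\ref{prop:polyart}), polynomiality of $F_\varphi$ on $\Z_{\ge0}^n$ amounts to the following claim: there is a $d$ such that every product of more than $d$ factors of the form $(\I_{P_i}-\I_O)$ is killed by $\varphi$. Indeed, $\varphi\bigl(\prod_i (\I_{P_i}-\I_O)^{*k_i}*\I_{\sum m_iP_i}\bigr)$ is exactly an iterated forward difference of $F_\varphi$.

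\emph{Step 3: the key claim.} I would prove the claim with $d=n$ in the following form: for any polytopes $Q_1,\dots,Q_{n+1}$, the chain $(\I_{Q_1}-\I_O)*\cdots*(\I_{Q_{n+1}}-\I_O)$ is annihilated by every translation-invariant valuation. The standard route is McMullen's: first decompose each $Q_i$ into simplices, so that by the valuation property it suffices to treat segments. For a segment $s=[0,v]$, the chain $\I_s-\I_O$ is, modulo translation-invariant valuations, the half-open segment. A product of $n+1$ half-open segments in $\R^n$ is a degenerate half-open zonotope. It can be dissected into translated pieces that cancel, because the $n+1$ vectors are linearly dependent.

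\emph{Step 4: the real case (i).} For real dilations I would first obtain polynomiality for rational $\lambda$ by rescaling: apply (ii) to $P_i/N$ after clearing denominators, using that a translation-invariant valuation composed with dilation is again one. To pass from rationals to reals, some continuity or monotonicity is needed. Following McMullen, I would instead prove directly that $\lambda\mapsto\varphi(\lambda P)$ satisfies Cauchy-type polynomial identities via the dissection of $\lambda P+\mu P$. Failing that, I would restrict to the rational-homogeneous statement as McMullen does.

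The main obstacle is Step 3, namely the vanishing of $(n+1)$-fold products of the differences $\I_Q-\I_O$ under translation-invariant valuations. This is the genuinely geometric input, while the rest is formal finite-difference bookkeeping.
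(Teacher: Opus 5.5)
The paper gives no proof of this statement; it quotes McMullen, so I compare your plan with McMullen's argument. Your finite-difference reformulation in Steps~1--2 is sound. To absorb the factor $\I_{\sum m_iP_i}$, note that $X\mapsto\varphi(X+R)$ is again a translation-invariant valuation.

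Step~3, however, has a genuine gap in the sentence ``decompose each $Q_i$ into simplices, so that by the valuation property it suffices to treat segments.'' Inclusion--exclusion over a triangulation gives $\I_Q=\sum_\sigma c_\sigma\I_\sigma$ with $\sum_\sigma c_\sigma=\chi(Q)=1$. Hence $\I_Q-\I_O=\sum_\sigma c_\sigma(\I_\sigma-\I_O)$ with $\sigma$ simplices, and the reduction stops there. A simplex is not a Minkowski sum of segments, and no relation trades a simplex factor for segment factors. To see this, take unit vectors $u_0,u_1,u_2\in\R^2$ with $u_0+u_1+u_2=0$ and set $g(P)=\sum_{j=0}^{2}\bigl(h_P(u_j)-h_P(-u_j)\bigr)$, where $h_P(u)=\max_{x\in P}\langle x,u\rangle$. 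Then $g$ is a translation-invariant, Minkowski-additive valuation vanishing on every segment. So $g$ kills every chain $(\I_s-\I_O)*X$ with $s$ a segment, yet $g(\I_T-\I_O)=3/2$ for $T=\mathrm{conv}(u_0,u_1,u_2)$. Thus $\I_T-\I_O$ does not lie, modulo the kernel of all translation-invariant valuations, in the ideal generated by the segment differences. Your half-open zonotope argument, which is plausible though only sketched, therefore proves the theorem only for zonotopes.

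The missing input is the simplex case, which McMullen treats directly by dissecting dilates. After a translation write $S=A\Delta_k$, where $\Delta_k=\{y\ge0,\ \sum y_i\le 1\}$ and $A$ is injective linear (a lattice map in case (ii)). Cut $m\Delta_k$ by the half-open cubes $c+[0,1)^k$, $c\in\Z^k_{\ge0}$. The piece in $c+[0,1)^k$ is $c+C_j$, where $j=\min(m-|c|,k)$ and $C_j=[0,1)^k\cap\{\sum y_i\le j\}$. As polynomials in $m$, type $j<k$ occurs $\binom{m-j+k-1}{k-1}$ times and type $k$ occurs $\binom{m}{k}$ times. Hence $\varphi(mS)$ is a polynomial of degree at most $k$ in $m$. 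Triangulating then gives polynomiality of $\varphi(mP)$ for every $P$. The mixed statement follows by applying this to the valuations $X\mapsto\varphi(X+R)$, interpolating in one variable at a time, and bounding the total degree along the rays $t\mapsto tm$. Your nilpotency claim then comes out as a corollary (a mixed difference of a polynomial of degree at most $n$), not as the input.

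As for Step~4, with real $\lambda_i$ and no regularity hypothesis, (i) is actually false. On $\R^1$, $\varphi([a,b])=h(b-a)$ with $h$ a discontinuous additive function is a translation-invariant valuation with $F_\varphi(\lambda)=h(\lambda)$, so no Cauchy-type argument can succeed. What holds is polynomiality for rational $\lambda_i\ge0$, which your rescaling gives, and for real $\lambda_i$ when $\varphi$ is continuous. This is harmless for the paper, which only applies the theorem to volume and lattice-point counts.
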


For $\varphi=\vol$, the coefficient of $\lambda_1\cdots\lambda_n$ in
$F_\varphi$ is the \emph{mixed volume} of $P_1,\dots,P_n$.
Theorem~\ref{thm:mcmullen} was extended by Pukhlikov and Khovanskii
to the case of virtual polytopes:

\begin{theorem}[{\cite[Corollary~5]{PK92}}]\label{thm:PK-poly}
Let $\varphi$ be a translation-invariant valuation.
\begin{enumerate}[label=\normalfont(\roman*)]
\item For any virtual polytopes $P_1,\dots,P_n$ and
      $\lambda_1,\dots,\lambda_n\in\R$, the function
      $F_\varphi(\lambda_1,\dots,\lambda_n)
        =\varphi(\lambda_1P_1+\dots+\lambda_nP_n)$
      is a polynomial in $\lambda_1,\dots,\lambda_n$.
\item The analogous statement holds for integral virtual polytopes and
      integers $\lambda_1,\dots,\lambda_n$.
\end{enumerate}
\end{theorem}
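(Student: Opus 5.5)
The plan is to reduce to McMullen's Theorem~\ref{thm:mcmullen}, using the convex-chain formalism of Subsection~\ref{ssec:convchains} to make sense of $\varphi$ on virtual polytopes. Write each $P_i=A_i-B_i$ with $A_i,B_i$ convex, so that $\I_{P_i}=\I_{A_i}*\I_{B_i}^{*-1}$ in the convolution algebra. Let $\Psi$ be the linear extension of $\varphi$ to convex chains. For a fixed chain $\beta$ consider
\[
g_\beta(k_1,\dots,k_n)=\Psi\bigl(\I_{P_1}^{*k_1}*\cdots*\I_{P_n}^{*k_n}*\beta\bigr),\qquad k\in\Z^n .
\]
Translation invariance lets us restrict $\beta$ to characteristic functions of polytopes, and linearity then extends everything to all chains. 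I would prove the integer statement (ii) first and then deduce (i) from it.

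\emph{Step 1: one convex variable.} Let $A$ be a convex polytope and $\beta=\I_Q$ with $Q$ a convex polytope. Then
\[
t\mapsto\Psi(\I_A^{*t}*\beta)=\varphi(tA+Q),\qquad t\ge 0,
\]
is a polynomial of degree at most $d=\dim$ of the ambient space, by Theorem~\ref{thm:mcmullen}. (McMullen's theorem is valid for any number of summands, not only $n$.) The degree bound is uniform in $\beta$, so we obtain the finite-difference identity
\[
\Psi\bigl((\I_A-\I_O)^{*(d+1)}*\beta\bigr)=0\qquad\text{for all chains }\beta .
\]
Now substitute $\beta\mapsto \I_A^{*(-N)}*\beta$. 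The identity still holds, so the whole two-sided sequence $t\mapsto\Psi(\I_A^{*t}*\beta)$, $t\in\Z$, is annihilated by $D^{d+1}$. Hence it is a single polynomial on $\Z$, which is the polynomial continuation of the values for $t\ge0$.

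\emph{Step 2: integer case.} Apply Step 1 to each of the convex polytopes $A_i$ and $B_i$, treating $\I_{B_i}^{*-1}$ as the shift in the opposite direction. The function
\[
(a,b)\mapsto\Psi\Bigl(\prod\I_{A_i}^{*a_i}*\prod\I_{B_i}^{*b_i}*\beta\Bigr)\quad\text{on }\Z^{2n}
\]
is then killed by $D_j^{d+1}$ in every coordinate direction. A function on $\Z^{2n}$ annihilated by a power of each coordinate difference operator is a polynomial; this is the converse direction of Proposition~\ref{prop:polyart}, proved by induction on the number of variables. Restricting to the diagonal $a_i=k_i=-b_i$ gives (ii). Integrality of the polytopes is never used in this argument, so the same statement holds for arbitrary virtual polytopes with integer coefficients.

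\emph{Step 3: real coefficients.} For $\lambda$ in a fixed sign orthant we have $\lambda_iP_i=|\lambda_i|(\pm A_i\mp B_i)$. I first need to show that $F_\varphi$ is polynomial on each orthant; the concern is that a virtual difference still appears. To handle it, fix a denominator $N$ and apply Step 2 to the polytopes $A_i/N$ and $B_i/N$. This shows that on $\frac1N\Z^n$ the function $F_\varphi$ agrees with a polynomial $p_N$. These polynomials are compatible: $p_{NM}$ restricted to $\frac1N\Z^n$ equals $p_N$, and all of them have degree at most $d$ in each variable. Therefore they are all one polynomial $p$, and $F_\varphi=p$ on $\Q^n$. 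It remains to pass from $\Q^n$ to $\R^n$. On the convex part this continuity is supplied by Theorem~\ref{thm:mcmullen}(i), applied to the $2n$ convex polytopes $A_i,B_i$ with positive real coefficients, together with Step 1 run with real dilations.

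\emph{Main obstacle.} I expect the main obstacle to be this last passage to real $\lambda$. Specifically, one must show that $s\mapsto\Psi\bigl(\I_{sA}*\I_{tB}^{*-1}\bigr)$ is polynomial for real $s,t>0$ when $tB$ is not an integer multiple of a fixed polytope, since the inverse chain then has no direct McMullen interpretation. My first attempt would be to treat $\I_{tB}^{*-1}$ as a fixed chain $\beta$. Then $s\mapsto\Psi(\I_{sA}*\beta)$ is a linear combination of functions $s\mapsto\varphi(sA+Q)$, each polynomial in real $s$ by Theorem~\ref{thm:mcmullen}(i). The symmetric argument, with the roles of the two variables swapped, gives polynomiality in $t$. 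Finally, a function that is separately polynomial of bounded degree in each real variable is jointly polynomial.
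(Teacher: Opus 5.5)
The paper gives no proof of this statement. It is quoted from \cite{PK92} and used as a black box. So your proposal is a self-contained derivation from Theorem~\ref{thm:mcmullen}, not a variant of an argument in the paper, and it is essentially correct.

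\textbf{Integer part.} This part is clean. The identity $\Psi\bigl((\I_A-\I_O)^{*(d+1)}*\beta\bigr)=0$ for all chains $\beta$, combined with the substitution $\beta\mapsto\I_A^{*(-N)}*\beta$, says that $(1-T)^{d+1}$ annihilates $t\mapsto\Psi(\I_A^{*t}*\beta)$ on all of $\Z$. This is McMullen's nilpotency of $[A]-1$, phrased in the shift-operator language of Section~\ref{sec:PKalgebras}. It yields (ii) for arbitrary, not necessarily integral, virtual polytopes. It does use the bound $\deg\le d$, uniform in $Q$, from McMullen's original theorem; the version quoted in the paper omits that bound.

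\textbf{Real part.} This is where the write-up is weak. Your ``symmetric argument'' for $t$ is not symmetric. The variable $t$ sits inside the inverse, which is exactly the thing you said lacks a McMullen interpretation. The obstacle disappears once you use the Minkowski inversion theorem stated in the paper. For $t>0$,
\[
\I_{tB}^{*-1}=(-1)^{\dim B}\,\I_{\mathrm{RInt}(\mathrm{Symm}\,tB)}=\sum_{F}(-1)^{\dim F}\,\I_{t\,\mathrm{Symm}\,F},
\]
summed over the nonempty faces $F$ of $B$, including $B$ itself. Hence
\[
\Psi\bigl(\I_{sA}*\I_{tB}^{*-1}\bigr)=\sum_F(-1)^{\dim F}\varphi\bigl(sA+t\,\mathrm{Symm}\,F\bigr),
\]
which is jointly polynomial in $(s,t)$ by Theorem~\ref{thm:mcmullen}(i). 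Alternatively, you can run Step~1 with real dilations, using $\I_{uB}*\I_{tB}^{*-1}=\I_{(u-t)B}$ for $u\ge t$.

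Expanding every inverse factor this way gives polynomiality on each open orthant directly, with no separate-to-joint lemma. The rational-denominator detour can also be dropped. Each orthant polynomial agrees with the polynomial from (ii) on the integer points of that orthant, and these are Zariski dense; the same works on coordinate subspaces.

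\textbf{Regularity.} You are right that the step to real $\lambda$ cannot come from the lattice argument. It is the only place where regularity enters. Take the translation-invariant valuation $[a,b]\mapsto f(b-a)$ on $\R^1$, with $f$ additive but not $\R$-linear. Then $\varphi(\lambda[0,1])=f(\lambda)$ is polynomial on $\Q$ but not on $\R$. So (i), like Theorem~\ref{thm:mcmullen}(i) as quoted, tacitly needs something like continuity of $\varphi$. Your proof inherits exactly that hypothesis through Theorem~\ref{thm:mcmullen}(i).
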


\begin{remark}
There is a generalization of this theorem for the case of \emph{polynomial valuations}: the valuations that are not constant under translations, but rather depend polynomially upon them. See~\cite[Sec.~8]{PK92} for details.
\end{remark}

This theorem immediately implies the following proposition.

\begin{proposition}\label{prop:ehrhart-extension}
There exists a unique function $\Ehr\colon \Pm \to \Z$, called the
\emph{Ehrhart polynomial}, satisfying
\[
\Ehr(P) = |P\cap \Z^d| \quad\text{for every } P\in \Pm^+,
\]
whose restriction to every finite-dimensional affine subspace
of~$\Pm$ (equivalently, to every linear family of polytopes in the
sense of Subsection~\ref{ssec:linfamilies}) is a polynomial.
\end{proposition}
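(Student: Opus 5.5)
We extend the lattice-point count from $\Pm^+$ to $\Pm$ by the valuation route of Subsection~\ref{ssec:ehrhart}, and then read off polynomiality and uniqueness from Theorem~\ref{thm:PK-poly}. First, $\varphi(P)=|P\cap\Z^d|$ is a translation-invariant valuation on integral polytopes: translations by integer vectors preserve it, and inclusion--exclusion holds for $P\cup Q$ convex because $\I_{P\cup Q}+\I_{P\cap Q}=\I_P+\I_Q$ pointwise on $\Z^d$. Restricting to integer translations is harmless, because integral polytopes and integral virtual polytopes are only translated by lattice vectors in what follows.

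For existence, extend $\varphi$ linearly to integer convex chains by $\varphi(\alpha)=\sum_{x\in\Z^d}\alpha(x)$. This is well defined because a convex chain is a function on the lattice with finite support. It agrees with $|P\cap\Z^d|$ on $P\in\Pm^+$. Each virtual integral polytope is a convex chain: by the Corollary to Minkowski inversion, $P-Q$ corresponds to $\I_P*(\I_Q)^{*-1}$, which is a finite combination of characteristic functions of integral polytopes (faces and Minkowski sums). We set $\Ehr(P-Q):=\varphi(\I_P*\I_Q^{*-1})$. Since the map $P\mapsto\I_P$ is a group homomorphism into the multiplicative group, this depends only on the virtual polytope and not on its representation.

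Next comes polynomiality. Any finite-dimensional affine subspace of $\Pm$ consists of points of the form $P_0+\sum n_iP_i$ with $P_i\in\Pm$ and $n_i\in\Z$. Theorem~\ref{thm:PK-poly}(ii), applied to the integral virtual polytopes $P_0,P_1,\dots,P_k$ with $\lambda_0=1$, shows that $\Ehr$ is polynomial in $(n_1,\dots,n_k)$. Here we rely on the fact that the extension of $\varphi$ used in \cite{PK92} is the one given by convex chains, which is exactly our construction.

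Uniqueness is the step needing the most care. Suppose $E$ and $E'$ both satisfy the conditions, and take a virtual polytope $P-Q$. Consider the line $n\mapsto P+nQ$ inside the two-dimensional family $aP+bQ$. Both $E$ and $E'$ restrict to polynomials in $(a,b)\in\Z^2$ on this family, and they agree on $a,b\ge0$ (not both zero), since there $aP+bQ\in\Pm^+$; they also agree at $a=b=0$, which is the point. Two polynomials in two variables that agree on the quadrant $\Z_{\ge0}^2$ are identical, because a polynomial vanishing on a Zariski-dense set vanishes. Evaluating at $(a,b)=(1,-1)$ gives $E(P-Q)=E'(P-Q)$. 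The main obstacle is only bookkeeping: making sure that ``polynomial on every finite-dimensional affine subspace'' really covers the span of $P$ and $Q$, which it does by hypothesis.
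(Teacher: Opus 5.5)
Your proposal is correct and takes essentially the paper's route. The paper's proof is the one-line remark that Theorem~\ref{thm:PK-poly}, applied to the lattice-point count, gives polynomial extensions on all finite-dimensional families that are compatible with each other. You make explicit what it leaves implicit: the global definition of $\Ehr$ via integer convex chains, and the uniqueness argument via Zariski density of $\Z_{\ge 0}^2$ in the parameter plane of the family $aP+bQ$.

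One justification should be tightened. $|P\cap\Z^d|$ is invariant only under lattice translations, so what you are really invoking is the lattice version of Theorem~\ref{thm:PK-poly}(ii) for lattice-translation-invariant valuations, which is what McMullen and Pukhlikov--Khovanskii prove. The reason this is legitimate is that this version exists, not which translations happen to occur later.
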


The existence and uniqueness of~$\Ehr$ follow from
Theorem~\ref{thm:PK-poly} applied to the lattice-point count
valuation: the number of lattice points of a lattice polytope extends
to a polynomial function on any finite-dimensional linear family, and
these polynomials are compatible across families.

\begin{remark}\label{rem:Ehr-integrality}
Although the convex-chain constructions of
Subsections~\ref{ssec:virtpoly}--\ref{ssec:ehrhart} are most naturally
set up over~$\R$, the Ehrhart polynomial~$\Ehr$ takes \emph{integer}
values on the lattice~$\Pm_\Sigma\subset\Z^r$.
It is this integrality that allows us to view~$\Ehr$ as a function
$\Pm_\Sigma\to\Z$ and to apply the shift-operator formalism of
Section~\ref{sec:PKalgebras} (which works over~$\Z$ or~$\Q$) to
define the $K$-ring~$K_\Sigma$ in
Subsection~\ref{ssec:linfamilies}.
\end{remark}

% ------------------------------------------------------------
\subsection{Integer point transform and projected integer point
transform}\label{ssec:TEhrhart}
% ------------------------------------------------------------

The integer point transform is the equivariant counterpart of the
Ehrhart polynomial: it records not merely the number of lattice points
in a polytope, but their precise positions (and hence their torus
weights).
It will be used to describe the $T$-equivariant $K$-theory in
Subsections~\ref{ssec:eqK} and~\ref{ssec:polyKflags}, and the
projected version~$\sigma_\pi$ defined below is the key ingredient in
the polyhedral realization of the Borel--Weil--Bott theorem via
string polytopes (Proposition~\ref{prop:virtual equality}).

\begin{definition}
Let $P\subset \R^n\cong\Lambda\otimes\R$ be a lattice polytope.
Its \emph{integer point transform} is the generating function
\[
\sigma(P)=\sum_{a\in P\cap \Lambda}\mathbf{t}^a
   \;\in\;\Z[t_1^{\pm1},\dots,t_n^{\pm1}],
\]
where $\mathbf{t}^a=t_1^{a_1}\cdots t_n^{a_n}$ for
$a=(a_1,\dots,a_n)\in\Z^n$.
\end{definition}

Now let $\pi\colon\Lambda\cong \Z^n\to\Z^r$ be a lattice homomorphism.
It induces a ring homomorphism
\[
\widehat\pi\colon\Z[t_1^{\pm 1},\dots,t_n^{\pm1}]
  \to\Z[y_1^{\pm 1},\dots,y_r^{\pm1}],
  \qquad\mathbf{t}^a\mapsto y^{\pi(a)}.
\]
For a lattice polytope~$P$ we denote the image of $\sigma(P)$
under~$\widehat\pi$ by $\sigma_\pi(P)$ and call it the
\emph{projected integer point transform} of~$P$ with respect to~$\pi$.

\begin{example}\label{ex:to-origin}
Let $\pi\colon \Z^n\to 0$ be the zero map.
Then $\sigma_\pi(P)=\Ehr(P)$ is the Ehrhart polynomial, so the
ordinary Ehrhart polynomial is the special case of the projected
integer point transform in which all torus weights are collapsed to a
single point.
\end{example}

\begin{example}\label{ex:gaussian}
Let $P=[0,m-1]\subset \R$ be a lattice segment and
$\pi\colon \Z\to\Z$ the identity map.
Then
\[
\sigma_\pi(P)=1+y+\dots+y^{m-1}=\frac{y^m-1}{y-1}=[m]_y
\]
is a Gaussian integer (as a function of $y$).
This is a prototype for the $q$-analogue of the dimension formula
that appears in the equivariant theory for flag varieties; compare
the Weyl character formula in Subsection~\ref{ssec:polyKflags}.
\end{example}

% ------------------------------------------------------------
\subsection{Brion's theorem}\label{ssec:brion}
% ------------------------------------------------------------

Brion's theorem expresses the integer point transform of a bounded
lattice polytope as a sum of contributions from its vertices.
It was first proved in~\cite{Brion88}; for an elementary account
see~\cite[Chapter~11]{BeckRobins15}.
In our paper Brion's theorem plays a crucial role in the proof of
Proposition~\ref{prop:virtual equality}: it provides the analytic
structure of $\sigma_\pi(S(\lambda))$ as a function of~$\lambda$
which, combined with the Weyl character formula, allows one to extend
the identity $\chi_T(G/B,\Lm_\lambda)=\sigma_\pi(S(\lambda))$ from a
full-dimensional cone in the weight lattice to all of~$\Lambda$.

Let $P\subset \R^n$ be a bounded lattice polytope.
For each vertex~$v$ of~$P$, let $K_v$ denote the tangent cone to~$P$
at~$v$.
The integer point transform $\sigma(K_v)$ is a Laurent series in
$t_1,\dots,t_n$; it can be written as a rational function
$\sigma(K_v)=f_v(t)/g_v(t)$ where $f_v$ and $g_v$ are Laurent
polynomials.

\begin{theorem}[{\cite{Brion88}}]\label{thm:brion}
The following equality of rational functions holds:
\[
\sigma(P)=\sum_{v\;\mathrm{vertex\;of}\;P} \sigma(K_v).
\]
\end{theorem}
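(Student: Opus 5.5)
We plan to prove Brion's formula by reducing it to an identity of indicator functions modulo characteristic functions of polyhedra that contain lines, and then applying a valuation that kills such polyhedra. The framework of convex chains from Subsection~\ref{ssec:convchains} carries over verbatim to (possibly unbounded) rational polyhedra, giving an algebra of polyhedral chains.

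The key tool is a linear map $\sigma$ from rational polyhedral chains to rational functions in $t_1,\dots,t_n$ (Lawrence--Khovanskii--Pukhlikov). First, for a pointed rational cone $K$ with apex a lattice point, triangulate $K$ into simplicial cones. Each simplicial cone's lattice points form a finite union of translates of the semigroup generated by the primitive ray vectors, tiled by the half-open fundamental parallelepiped. Summing the resulting geometric series shows that $\sigma(K)$ converges on a nonempty open set and equals a rational function; inclusion--exclusion over the triangulation then handles general pointed cones. Second, we check that $\sigma$ is additive on indicator functions, i.e.\ that it is a valuation on polyhedra. Finally, we show that $\sigma(Q)=0$ whenever $Q$ contains a line. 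Indeed, if $Q$ contains the line in direction $u$, then $Q+u=Q$, so $(\mathbf t^{u}-1)\sigma(Q)=0$ as rational functions, forcing $\sigma(Q)=0$.

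The main obstacle is the well-definedness of this valuation: for polyhedra with lines the series converge nowhere, so $\sigma$ cannot be defined by summation. We will handle this by defining $\sigma$ first on pointed cones and then extending it by linearity to the algebra generated by indicator functions. This requires the standard fact (Lawrence, Khovanskii--Pukhlikov) that a linear relation among indicator functions of pointed cones implies the same relation among their rational functions. We would prove this by restricting to a common open domain of convergence after a unimodular change of variables, or by citing \cite{BeckRobins15}.

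With $\sigma$ in hand, we use the Brianchon--Gram identity
\[
\I_P=\sum_{F}(-1)^{\dim F}\,\I_{K_F},
\]
where $F$ runs over all nonempty faces of $P$ and $K_F$ is the tangent cone of $P$ at $F$. Every $K_F$ with $\dim F>0$ contains the affine span of $F$, hence contains a line. Applying $\sigma$ kills all those terms and leaves $\sigma(P)=\sum_v\sigma(K_v)$, which is the claim. The Brianchon--Gram identity itself will be proved pointwise: for $x\notin P$, the faces whose tangent cones contain $x$ are exactly the faces not visible from $x$, and they form a contractible subcomplex of the boundary, so the alternating sum vanishes. For $x\in P$, every tangent cone contains $x$ and the sum is the Euler characteristic of $P$, which equals $1$.
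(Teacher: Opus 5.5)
The paper does not prove this theorem; it quotes it from \cite{Brion88}. Brion's original argument is algebro-geometric: $\sigma(P)$ is the equivariant Euler characteristic of $\Lm_P$ on the toric variety of $P$ (compare Proposition~\ref{prop:eqcoh}), and localizing it at the torus-fixed points, which correspond to the vertices, produces the terms $\sigma(K_v)$. You take the standard elementary route instead: the Lawrence--Khovanskii--Pukhlikov valuation combined with Brianchon--Gram. It needs no toric geometry and handles non-simple polytopes directly, which is the situation in Proposition~\ref{prop:virtual equality}. The plan is sound, but two steps fail as written.

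\textbf{Brianchon--Gram for $x\notin P$.} The faces $F$ with $x\in K_F$ (the faces not visible from $x$) do not form a subcomplex of $\partial P$. This family contains $P$ itself and is closed under passing to \emph{larger} faces, not smaller ones. For the unit square and $x=(\tfrac12,-1)$, it contains the left edge but not that edge's lower vertex. Moreover, if these faces did form a contractible complex, their alternating sum would be $1$, not $0$. Argue with the complement instead. The faces visible from $x$, meaning those lying in some facet whose inequality $x$ violates, do form a subcomplex of $\partial P$ homeomorphic to a $(\dim P-1)$-ball, so their alternating sum is $1$. Subtracting this from $\sum_F(-1)^{\dim F}=1$ (over all nonempty faces, $P$ included) gives $0$. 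For $x\notin\mathrm{aff}(P)$ every term vanishes.

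\textbf{Well-definedness of $\sigma$.} A linear relation among pointed cones generally cannot be checked on a common domain of convergence, with or without a unimodular change of variables. Already $\I_{\R}=\I_{\R_{\ge0}}+\I_{\R_{\le0}}-\I_{\{0\}}$, the kind of expansion you need for the tangent cones $K_F$, involves series converging on the disjoint sets $|t|<1$ and $|t|>1$. The Brianchon--Gram relation itself involves vertex cones pointing in all directions.

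The standard fix is to cut by the $2^n$ half-open coordinate orthants $H_\epsilon$.
\begin{itemize}
\item Multiplying a relation $\sum_i c_i\I_{Q_i}=0$ by $\I_{H_\epsilon}$ gives a relation among pieces contained in $\overline{H_\epsilon}$. Their lattice points are those of closed pointed polyhedra, and their series all converge absolutely on the common set $\{|t_j|^{\epsilon_j}<1\}$.
\item For pointed $Q$, the identity $\sigma(Q)=\sum_\epsilon\sigma(Q\cap H_\epsilon)$ holds on the convergence domain of $Q$, since each piece is a subseries of an absolutely convergent series.
\item Defining $\sigma$ of an arbitrary polyhedron by this sum yields the valuation. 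Translation-equivariance then extends by linearity, so your argument $(\mathbf t^u-1)\sigma(Q)=0$ goes through.
\end{itemize}
Citing the Lawrence--Khovanskii--Pukhlikov theorem, as you also suggest, is legitimate. In that case the citation, not the convergence sketch you gave, is what carries this step.
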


% ------------------------------------------------------------
\subsection{$K$-ring of linear families of polytopes}%
\label{ssec:linfamilies}
% ------------------------------------------------------------

We now introduce the principal geometric objects of this paper.

Let $\Lambda$ be a lattice.
A \emph{linear family of polytopes} parametrized by~$\Lambda$ is a
group homomorphism $\phi\colon \Lambda \to \Pm$, i.e.\ a map
$\lambda\mapsto P_\lambda$ satisfying
\[
P_{\lambda+\mu} = P_\lambda + P_\mu \quad\text{for all }
\lambda,\mu \in \Lambda.
\]
Equivalently, a linear family is a Minkowski-linear map from the
lattice~$\Lambda$ to the group of virtual polytopes.

\begin{definition}\label{def:K ring of linear family}
Let $\phi\colon\Lambda\to \Pm$ be a linear family of polytopes.
Its \emph{$K$-ring} and \emph{equivariant $K$-ring} are the discrete
PK-rings (see Definition~\ref{thm:diffalg}) associated with the
Ehrhart polynomial and the integer point transform of the family,
respectively:
\[
K_\phi \;=\; \Sh(\Lambda)/\Ann\!\bigl(\Ehr(P_\lambda)\bigr),
\qquad
K_\phi^T \;=\; \Sh(\Lambda)/\Ann\!\bigl(\sigma(P_\lambda)\bigr).
\]
\end{definition}

Since $\Ehr$ is a polynomial on every finite-dimensional linear
family and the shift algebra~$\Sh(\Lambda)$ acts on polynomials via
difference operators, Proposition~\ref{prop:polyart} implies that
$K_\phi$ is an Artinian (hence finite-dimensional over~$\Q$) algebra
whenever $\Lambda$ is a lattice of finite rank.
For the equivariant ring~$K_\phi^T$ the same conclusion holds by
Theorem~\ref{thm:dualitywithcoef}.

\begin{remark}\label{rem:notation-KSigma}
In what follows we will use the shorthand notation $K_\Sigma$ and
$K_\Sigma^T$ for the $K$-ring and equivariant $K$-ring of the linear
family $\phi\colon\Pm_\Sigma\to\Pm$ given by the identity map on
virtual polytopes with normal fan refining~$\Sigma$.
\end{remark}

In this paper we focus on the (equivariant) $K$-rings of two families
of examples.
\begin{enumerate}[label=\normalfont(\roman*)]
\item \textbf{Toric varieties.}
      The linear family $\phi\colon\Pm_\Sigma\to\Pm$ associated to a
      smooth complete fan~$\Sigma$, parametrized by the finite-rank
      lattice of integral cone-wise linear functions on~$\Sigma$.
      The resulting rings $K_\Sigma$ and $K_\Sigma^T$ are studied in
      Sections~\ref{sec:polyKring} and~\ref{ssec:eqK}, and identified
      with the (equivariant) $K$-theory of the toric variety~$Y_\Sigma$
      in Section~\ref{ssec:Ktoric}.

\item \textbf{Flag varieties.}
      The linear family of Gelfand--Zetlin polytopes (or, more
      generally, string polytopes for a reductive group~$G$),
      parametrized by the character lattice~$\Lambda$ of a maximal
      torus of~$G$.
      The resulting rings are identified with the (equivariant)
      $K$-theory of $G/B$ in Sections~\ref{ssec:polyKflags}
      and~\ref{sec:flags}.
\end{enumerate}
In both cases the parametrizing lattice is of finite rank, so the
corresponding $K$-rings are Artinian by the remark above.

\section{Polytope $K$-ring}\label{sec:polyKring}
In this section we investigate the structure of the $K$-ring of linear family of (virtual) polytopes associated with a smooth complete fan $\Sigma$. We do not assume that $\Sigma$ is a projective fan, i.e. that there exists a convex polytope $\Delta$ with normal fan $\Sigma$.

\subsection{Generators and ideal of smooth polytope $K$-ring}\label{ssec:structureK}
Let  $\Sigma$ be a smooth complete fan with $|\Sigma(1)|=r$. In this case, the  evaluation of $H_P$ on integer ray generators of $\Sigma$ canonically identifies the lattice $\Pm_\Sigma$ with $\Z^r$. We will denote by $P_1,\ldots,P_r$ the corresponding basis of $\Pm_\Sigma$. In other words $P_i$ is a virtual polytope in $\Pm_\Sigma$ such that
\[
H_{P_i}(e_j)=\begin{cases}
			1, & \text{if $i=j$}\\
            0, & \text{otherwise},
		 \end{cases}
\]
where $e_1,\ldots e_r$ are the primitive ray generators of $\Sigma$. We further denote by $t_1,\ldots,t_r$ the shift operators with respect to $P_1,\ldots,P_r$. 

\begin{theorem}\label{thm:polalgrel}
Let $\Sigma\subset (\R^n)^\vee$ be a smooth complete fan with $\Sigma(1)=r$ and primitive ray generators $e_1,\ldots, e_r$. Then the polytope ring $K_\Sigma$ is given by
\[
K_\Sigma \simeq \Q[T_1^{\pm 1},\ldots,T_r^{\pm 1}]/(I+J),
\]
where $I$ is generated by products $(1-T^{-1}_{i_1})\cdots (1-T^{-1}_{i_t})$ such that $\rho_{i_1},\ldots, \rho_{i_t}\in \Sigma(1)$ are distinct and do not form a cone in $\Sigma$ and $J = \left\langle \prod_{i=1}^r T_i^{\langle u,e_i\rangle}-1 \colon u\in \Z^n \right\rangle$.
\end{theorem}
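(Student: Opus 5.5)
We prove Theorem~\ref{thm:polalgrel} in three steps. First we check that $I+J\subseteq\Ann(\Ehr)$. Then we show that the quotient $\Q[T_1^{\pm1},\dots,T_r^{\pm1}]/(I+J)$ is spanned by at most $|\Sigma(n)|$ monomials. Finally we show that $\dim_\Q K_\Sigma\ge|\Sigma(n)|$. The obvious surjection $\Q[T^{\pm1}]/(I+J)\to K_\Sigma$ is then an isomorphism.

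\textbf{The relations hold.} For $J$: the operator $\prod_i T_i^{\langle u,e_i\rangle}$ shifts the argument by the virtual polytope whose support function is the linear function $u$ on the rays. That polytope is a lattice point, i.e.\ a translation by $\pm u$. Since $\Ehr$ is translation invariant, the operator acts trivially, so $J\subseteq\Ann(\Ehr)$.

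For $I$: the key step is Lemma~\ref{lem:Dsig}. For honest polytopes $P\in\Pm_\Sigma^+$ (or, if $\Sigma$ is not projective, convex chains with normal fan refining $\Sigma$), $D_i=1-T_i^{-1}$ applied to $\Ehr$ gives $\Ehr(P)-\Ehr(P-P_i)$. Moving facet $i$ inward by one lattice step removes exactly the lattice points on the facet $F_i$. This uses smoothness, so that the adjacent lattice hyperplane is at height one. Hence $D_i\cdot\Ehr(P)=\Ehr(F_i(P))$, where $F_i(P)$ is the corresponding facet. Iterating, $D_\sigma\cdot\Ehr(P)=\Ehr(P^\sigma)$, the number of lattice points of the face dual to $\sigma$. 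The face operators for different rays commute, and the face of a face is computed in the same way. If the rays do not span a cone, the corresponding facets have empty intersection, so the product gives $0$.

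To pass from honest polytopes to all of $\Pm_\Sigma$ when $\Sigma$ is not projective, we use polynomiality (Proposition~\ref{prop:ehrhart-extension}). $D_\sigma\cdot\Ehr$ is a polynomial, and it agrees with the face Ehrhart polynomial, extended virtually, on a Zariski dense set. Alternatively, we carry out the facet argument directly with virtual polytopes as convex chains, using the valuation property. We expect this extension to the non-projective case to need care but no new ideas.

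\textbf{Upper bound on the quotient.} Set $D_i=1-T_i^{-1}$. The ring $\Q[T^{\pm1}]$ is generated by the $D_i$ together with $T_i=(1-D_i)^{-1}$. Modulo $I$, every monomial in the $D_i$ whose support is not a cone vanishes. So the quotient is spanned by $\prod_i D_i^{k_i}$ with support a cone $\sigma$. Since the quotient is Artinian, with each $D_i$ nilpotent modulo $I+J$, the inverse $T_i$ is a polynomial in $D_i$. We now apply the Timorin-style Morse argument. Choose a generic linear function $h$, and order the maximal cones via the vertices of a (virtual) polytope. For each maximal cone $\tau$, let $\sigma(\tau)\subseteq\tau$ be the separatrix cone, spanned by the rays corresponding to the ``descending'' facets at the vertex. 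The linear relations from $J$, written in terms of $D$'s as $\prod_i(1-D_i)^{-\langle u,e_i\rangle}-1$ with $u$ ranging over the dual basis of $\tau$, let us rewrite any monomial $D^k$ supported on a cone. We first reduce the exponents to $0/1$ and then push toward the separatrix monomials $D_{\sigma(\tau)}$, by downward induction along the $h$-ordering. This is the discrete analogue of the cohomological shelling argument, and it yields spanning by the $|\Sigma(n)|$ separatrix monomials.

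\textbf{Lower bound.} $\dim K_\Sigma\ge|\Sigma(n)|$ follows from the Frobenius duality of Theorem~\ref{thm:diffalg}. The pairing matrix $\ell_f(D_{\sigma(\tau)}D_{\sigma'(\tau')}\,\cdot)$, taken against the dual (ascending) separatrix monomials, is unitriangular with respect to the Morse order. The diagonal entry for $\tau$ counts the lattice points of a vertex, namely $1$. The off-diagonal entries below the diagonal vanish because the relevant faces do not meet. We expect the main obstacle to be the spanning step: controlling higher powers $D_i^k$, $k\ge 2$, and the nonlinear terms coming from expanding $J$ in the $D_i$. We would handle these by a filtration by total degree in the $D_i$ and induction on the degree, so that the leading terms reproduce exactly the cohomological Timorin argument.
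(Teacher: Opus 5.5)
Your proposal follows the paper's proof essentially step by step. It checks $J$ by translation invariance and $I$ by the face-counting Lemma~\ref{lem:Dsig}, then reduces to square-free face monomials via the $J$-relations as in Lemma~\ref{lem:squarefree}. It proves spanning by separatrix monomials through the Khovanskii--Timorin descent, and independence by pairing each separatrix monomial with the opposite ($-w$) one, which gives a vertex (value $1$) on the diagonal and zeros below it.

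One step you assert rather than prove is that the quotient is Artinian, i.e.\ the $D_i$ are nilpotent modulo $I+J$. The paper's degree-raising reduction also tacitly needs this in order to terminate. It follows quickly by looking at any $\overline{\Q}$-point of the zero locus of $I+J$. The $I$-relations force $\{i : T_i\neq 1\}$ to span a cone, and the $J$-relations together with smoothness then force $T=(1,\dots,1)$. Hence $\Q[T_1^{\pm1},\dots,T_r^{\pm1}]/(I+J)$ is local Artinian with maximal ideal $(D_1,\dots,D_r)$.
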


The difference operators appearing in the definition of ideal $I$ will play an important role in what follows. We will denote them by $D_i=1-T^{-1}_{i}$. More generally, for any cone $\sigma \in \Sigma$ we denote by $D_\sigma$ the corresponding product of difference operators
\[
D_\sigma = \prod_{\rho_i\in \Sigma}D_i.
\]
We will refer to  $D_\sigma$ as to \emph{face monomials}; these are the classes of faces corresponding to cones $\sigma\in \Sigma$.

The proof of Theorem~\ref{thm:polalgrel} is done in two steps. The first part is to show that the relations $I$ and $J$ are satisfied in the algebra $K_\Sigma$. Indeed, the relations $J$ are satisfied since  
\[
\prod_{i=1}^r T_i^{\langle u,e_i\rangle} \cdot \Ehr(P)= \Ehr\left(P+\sum_{i=1}^r\langle u,e_i\rangle P_i\right) = \Ehr(P + u) = \Ehr(P) \text{ for any } u\in \Z^n.
\]
The relations from $I$ are subject of the following lemma.
\begin{lemma}\label{lem:Dsig}
Let $\Sigma$ be a smooth fan and $P\in \Pm_\Sigma$ an integer (virtual) polytope, then 
\[
D_{i_1}\cdots D_{i_t}\cdot \Ehr(P) =  \Ehr(F_\sigma)
\]
where $\sigma$ is a cone spanned by   $\rho_{i_1},\ldots, \rho_{i_t}$ and $F_\sigma$ is a corresponding (virtual) face of $P$. In particular, if  $\rho_{i_1},\ldots, \rho_{i_t}$ do not form a cone, $D_{i_1}\cdots D_{i_t}\cdot \Ehr(P)=0$.
\end{lemma}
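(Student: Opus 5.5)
Start with a single difference operator and then induct on $t$. By definition $T_i$ is the shift by $-P_i$, so
\[
D_i\cdot\Ehr(P)=\Ehr(P)-\Ehr(P-P_i).
\]
The polytope $P-P_i$ has the same support values as $P$ on every ray except $e_i$, where the value drops by one. So $P-P_i$ is the polytope cut out by the same inequalities $\langle x,e_j\rangle\ge H_P(e_j)$, with the $i$-th one tightened to $\langle x,e_i\rangle\ge H_P(e_i)+1$. For an honest polytope $P\in\Pm_\Sigma^+$ that is large enough that the tightened polytope is still in $\Pm^+_\Sigma$, the lattice points of $P$ not in $P-P_i$ are exactly those with $\langle x,e_i\rangle=H_P(e_i)$, since $e_i$ is primitive and $x$ is integral. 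These are the lattice points of the facet $F_{\rho_i}$, so $D_i\cdot\Ehr(P)=\Ehr(F_{\rho_i})$.

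Iterating, apply $D_{i_2}$ to $\Ehr(F_{\rho_{i_1}}(P))$ as a function of $P$. Since $P\mapsto F_{\rho_{i_1}}(P)$ is Minkowski-linear, the same argument applies inside the facet: shifting $H(e_{i_2})$ changes $F_{\rho_{i_1}}$ by removing its lattice points on the hyperplane $\langle x,e_{i_2}\rangle=H_P(e_{i_2})$. This gives $\Ehr(F_{\rho_{i_1}}\cap F_{\rho_{i_2}})$, and so on. Smoothness of $\Sigma$ matters here: the $e_{i_k}$ of a cone extend to a lattice basis, so at each stage the face is a lattice polytope and the supporting hyperplanes meet the lattice properly. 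Then $F_{\rho_{i_1}}\cap\dots\cap F_{\rho_{i_t}}$ is the face $F_\sigma$ if the rays span a cone $\sigma$. If they do not, the intersection is empty, because the face of $P$ on which all $e_{i_k}$ attain their minimum corresponds to the smallest cone containing them, and no such cone exists. Hence the value is $0$.

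Finally, extend from honest polytopes to virtual ones by polynomiality. Both sides are polynomial on $\Pm_\Sigma$: the left by Proposition~\ref{prop:ehrhart-extension}, and the right because $P\mapsto F_\sigma(P)$ is a linear family and $\Ehr$ is polynomial on it. They agree on a full-dimensional cone of sufficiently large polytopes in $\Pm_\Sigma^+$, and two polynomials agreeing on a Zariski-dense set agree everywhere.

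The main obstacle is the case of a non-projective $\Sigma$, where $\Pm_\Sigma^+$ may not have full rank. There I would instead work with convex chains: write the indicator of $P-P_i$ and use the valuation property of the lattice-point count on the chain $\I_P-\I_{P-P_i}$ directly. Alternatively, I would verify the identity locally at each maximal cone via Brion-type vertex cone decompositions (Theorem~\ref{thm:brion}), which are defined for virtual polytopes, specialized to $t=1$.
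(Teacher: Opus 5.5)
Your argument is sound when $\Sigma$ is \emph{projective}. For $P$ deep in the ample cone, every $P+\sum_{k\in S}P_{i_k}$ is an honest polytope with the expected facets. Inclusion--exclusion on lattice points then gives $\#(F_\sigma\cap\Z^n)$, or $0$ for a non-cone by simpliciality, and Zariski density extends this to all of $\Pm_\Sigma$. This is a rigorous form of the lattice-point/convex-chain computation to which the paper's one-line proof points.

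The gap is the non-projective case, which the lemma must cover: Section~\ref{sec:polyKring} and Theorem~\ref{thm:polalgrel} explicitly do not assume projectivity. There your density step has nothing to stand on. The nef cone has empty interior, so the $P$ for which the shifted polytopes are honest lie in a proper linear subspace of $\Pm_\Sigma\otimes\R$. In fact $\Pm_\Sigma^+$ may consist only of lattice points, since there are smooth complete toric threefolds with no nontrivial nef line bundles. For the same reason $F_\sigma$ cannot be defined as the linear extension of the face map from honest polytopes. It has to be defined intrinsically, via the piecewise linear function that $H_P$ induces on $\mathrm{Star}(\sigma)$. Your first remedy does not close the gap. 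Applying ``the valuation property'' to $\I_P-\I_{P+P_i}$ for a virtual $P$ presupposes an explicit description of the convex chain of a virtual polytope, and that description is exactly what is missing.

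Your second remedy is the one that works, but its key input is asserted rather than supplied. For $\sigma\in\Sigma(n)$, let $v_\sigma(P)\in M$ be the linear function agreeing with $H_P$ on $\sigma$, and let $m_{\sigma,j}$ be the basis dual to the rays of $\sigma$. Raising $H_P(e_i)$ by one moves only those $v_\sigma$ with $\rho_i\in\sigma$, each by $m_{\sigma,i}$. So the difference operator cancels the factor $1-\mathbf{t}^{m_{\sigma,i}}$ in $\mathbf{t}^{v_\sigma}\prod_{\rho_j\in\sigma}(1-\mathbf{t}^{m_{\sigma,j}})^{-1}$ and leaves the vertex sum of the virtual facet over $\mathrm{Star}(\rho_i)$. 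Iterating, a ray that does not form a cone with the previous ones lies in no common maximal cone, so every term cancels. Then set $\mathbf{t}=1$. This even yields the equivariant statement, uniformly in projectivity, \emph{provided} the vertex sum equals $\sigma(P)$ for every virtual $P\in\Pm_\Sigma$ with $\Sigma$ non-projective. Theorem~\ref{thm:brion} is stated only for honest polytopes, so this has to come from elsewhere. Possible sources are a Lawrence--Varchenko type vertex decomposition of chains of virtual polytopes, or localization on $Y_\Sigma$. Alternatively, Proposition~\ref{prop:coh} together with the restriction sequence for the toric divisor of $\rho_i$ proves the lemma directly, at the cost of using geometry.

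Finally, fix a sign slip in your first step. Whatever convention you take for $T_i$, the operator producing the facet count compares $P$ with the polytope whose value at $e_i$ is raised by one; with $H_P=\min$, that polytope is $P+P_i$. As written, ``the value drops by one'' and ``tightened to $H_P(e_i)+1$'' contradict each other. Moreover, $\Ehr(P)-\Ehr(P-P_i)$ is minus the lattice-point count of a facet of the \emph{enlarged} polytope, not $\Ehr(F_{\rho_i})$.
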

\begin{proof}
    We will use the language of convex chains, see Subsection~\ref{ssec:convchains} above. For a (virtual) polytope $P$, let $\I_P$ be the corresponding convex chain. The proof of the lemma follows the standard argument about the action of difference operators on the convex chains. See for example \cite[Proposition~6.2]{hof2020} for analogous statement.
\end{proof}

Since both sets of relations $I$ and $J$ are satisfied in $K_\Sigma$, there exists a surjection 
\[
\psi\colon \Q[T_1^{\pm 1},\ldots,T_r^{\pm 1}]/(I+J) \to K_\Sigma, \quad \psi\colon x_i\mapsto P_i.
\]
To show that $\psi$ is an isomorphism, we will then use a Morse theoretic argument which extend  results of \cite{khovanskii1986hyperplane,timorin1999analogue}.

First, let us notice that $\Q[D_1,\ldots,D_r] \subset \Q[T_1^{\pm 1},\ldots,T_r^{\pm 1}]$. In the following lemma we show that every element of $\Q[T_1^{\pm 1},\ldots,T_r^{\pm 1}]/(I+J)$ and hence of $K_\Sigma$ can be represented by a polynomial in difference operators.

\begin{lemma}
The natural projection
 \[
\Q[D_1,\ldots,D_r]/(I+J\cap\Q[D_1,\ldots,D_r]) \to \Q[T_1^{\pm 1},\ldots,T_r^{\pm 1}]/(I+J)
\]   
is surjective.
\end{lemma}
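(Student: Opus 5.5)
Since $\Q[T_1^{\pm1},\dots,T_r^{\pm1}]$ is generated as a ring by $T_1^{-1},\dots,T_r^{-1}$ and $T_1,\dots,T_r$, and $T_i^{-1}=1-D_i$ already lies in $\Q[D_1,\dots,D_r]$, it suffices to show that each $T_i$ is congruent modulo $I+J$ to a polynomial in $D_1,\dots,D_r$. Surjectivity then follows because the image of $\Q[D_1,\dots,D_r]$ is a subring containing all generators. (The kernel of the restricted map contains $I+J\cap\Q[D_1,\dots,D_r]$, and $I\subset\Q[D_1,\dots,D_r]$ by definition, so the map is well defined.)

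The key observation is that $D_i=1-T_i^{-1}$ is nilpotent modulo $I$. Fix $i$ and consider $D_i^{m}$ for $m$ large. I would use the following cleaner route. Choose a maximal cone $\tau\in\Sigma$ not containing $\rho_i$; one exists by completeness, since $-e_i$ lies in some maximal cone, and that cone cannot contain $\rho_i$. By smoothness, the generators $e_j$, $j\in\tau$, form a basis of $\Z^n$, so there is $u\in\Z^n$ with $\langle u,e_j\rangle=0$ for $j\in\tau$ and $\langle u,e_i\rangle=1$. The corresponding element of $J$ gives
\[
T_i\equiv \prod_{j\notin\tau,\,j\ne i}T_j^{-\langle u,e_j\rangle}\pmod J.
\]
This expresses $T_i$ through the other variables, but still with both signs of exponents, so the substitution alone does not finish the argument.

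The better strategy is to show that $1-T_i^{-1}$ is nilpotent in the quotient. The ring $\Q[T^{\pm1}]/(I+J)$ is finite-dimensional: $\psi$ is surjective onto the Artinian ring $K_\Sigma$, but that is the wrong direction, so this needs a direct argument. Instead I would argue that $D_i^{n+1}\in I+J$. Write $T_i^{-1}=\prod_{j\in\tau}T_j^{\langle u,e_j\rangle}\cdots$ and use the standard argument that, modulo $J$, $D_i$ is congruent to a combination of products involving the $D_j$ for rays of any chosen maximal cone. Any product of $n+1$ distinct $D$'s lies in $I$, since $n+1$ rays never span a cone in an $n$-dimensional smooth fan. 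Repeated $D$'s are handled by rewriting $D_i^2$ using a linear relation from $J$, namely
\[
\prod_j T_j^{\langle u,e_j\rangle}=1 \quad\text{for } u \text{ with } \langle u,e_i\rangle=1,
\]
together with the identity $T=1-D$ for the exponent $-1$ direction; exponents $+1$ are expanded as $(1-D)^{-1}$, which makes sense once nilpotency is known.

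To avoid this circularity, I would establish nilpotency first, by induction on dimension. The main obstacle is exactly this: showing that $D_i$ is nilpotent modulo $I+J$, i.e. the K-theoretic analogue of the vanishing of $x_i^{n+1}$. Granting nilpotency, say $D_i^N\in I+J$, we get
\[
T_i=(1-D_i)^{-1}\equiv \sum_{k<N}D_i^k,
\]
a polynomial in $D_i$, which completes the proof. For nilpotency, I would multiply $D_i$ by the relation from $J$ with $u$ chosen so that $\langle u,e_i\rangle=1$ and $\langle u,e_j\rangle=0$ on the rays of a maximal cone containing $\rho_i$. Expanding
\[
1-\prod_j T_j^{-\langle u,e_j\rangle}
\]
yields $D_i\equiv$ a sum of terms each divisible by some $D_j$ with $j$ outside that cone, modulo higher-order terms. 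Iterating this gives $D_i^{k}\in I$ plus terms of higher total $D$-degree, and degree eventually exceeds $n$ in distinct variables. I expect the bookkeeping of negative exponents (expanding $T_j=1/(1-D_j)$) to be the delicate point.
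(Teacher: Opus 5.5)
Your reduction is the same as the paper's. There too one writes $T_i-1=(1-D_i)^{-1}-1=D_i+D_i^2+\cdots$ and truncates the series, so everything rests on $D_i$ being nilpotent modulo $I+J$; the paper takes this from Lemma~\ref{lem:squarefree} ($D_i^k\in I+J$ for $k>n$).

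You explicitly grant this nilpotency, and the sketch you give for it does not work, so the one nontrivial step is missing:
\begin{itemize}
\item Your first choice of $u$ is impossible. The generators of a maximal cone form a basis, so a $u$ vanishing on all of them is $0$.
\item In the second choice, $u$ can vanish only on the rays of the cone \emph{other than} $\rho_i$.
\item The iteration ``$D_i^k\in I$ plus terms of higher total degree, and degree eventually exceeds $n$ in distinct variables'' is not a termination argument. Expanding $(1-D_l)^{a}$, let alone $(1-D_l)^{-1}$, produces remainders with repeated factors such as $D_l^2$. Large total degree does not force $n+1$ distinct variables, so the process need not end, and killing those remainders is nilpotency again.
\end{itemize}

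You can close the gap along your own lines by staying in the Laurent ring, where no higher-order remainder arises. Let $S$ be a set of rays spanning a cone and $i\in S$. By smoothness, choose $u$ with $\langle u,e_i\rangle=1$ and $\langle u,e_j\rangle=0$ for $j\in S\setminus\{i\}$. The generator of $J$ for $u$ gives
\[
D_i\equiv 1-\prod_{l\notin S}T_l^{\langle u,e_l\rangle}\pmod J .
\]
The right side lies in $\sum_{l\notin S}D_l\,\Q[T_1^{\pm1},\dots,T_r^{\pm1}]$, because $1-T_l^{a}$ is divisible by $D_l$ for every $a\in\Z$ (note $T_l-1=T_lD_l$).

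Now take a monomial $\prod_j D_j^{k_j}$ of degree $n+1$ with support $S$.
\begin{itemize}
\item If $S$ is not a cone, the monomial lies in $I$.
\item Otherwise $|S|\le n$, so some $k_i\ge 2$. Replacing one factor $D_i$ as above rewrites the monomial, modulo $J$, as a combination with Laurent coefficients of degree-$(n+1)$ monomials whose support $S\cup\{l\}$ is strictly larger.
\end{itemize}
Descending induction on $|S|$ gives $(D_1,\dots,D_r)^{n+1}\subseteq I+J$ in the Laurent ring. In particular $D_i^{n+1}\in I+J$, and your geometric-series step then finishes the proof.

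Alternatively, you can use the Nullstellensatz. At a common zero $t\in(\C^*)^r$ of $I+J$, the relations in $I$ force the set of $l$ with $t_l\neq 1$ to span a cone, and the relations in $J$ then force that set to be empty. So the only common zero is $(1,\dots,1)$, and each $D_i$ lies in the radical of $I+J$.
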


\begin{proof} Let $D_i^-=T_i-1$.  Monomials in classes $D_1,D_1^-,\dots, D_r,D_r^-$ linearly generate the ring $\Q[T_1^{\pm 1},\dots, T_r^{\pm r}]$. But $D_i^-$ can be expressed via $D_i$ modulo the ideal $I+J$. Indeed, since $T_i^{-1}=1-D_i$, we have
\[
D_i^-=T_i-1=\frac{1}{1-D_i}-1=D_i+D_i^2+D_i^3+\dots.
\]
But, according to Lemma~\ref{lem:squarefree}, all $D_i^k$ belong to $I+J$ for $k>n$, so $D_i^-$ is equal to an element of $\Q[D_1,\dots,D_r]$ modulo $I+J$. This completes the proof.
\end{proof}

%We will show that every class in $\Q[T_1^{\pm 1},\ldots,T_r^{\pm 1}]/(I+J)$ can be represented by a linear combination of the face monomials $D_\sigma$. 

\begin{lemma}\label{lem:squarefree}
    Every class of the form $\prod D_i^{k_i}$ is equivalent to a linear combination of face monomials  $D_\sigma$ modulo the ideal $I+J$. Moreover, for every face monomial  $D_\sigma$ appearing in such a linear combination, one has $\dim \sigma \geq \sum k_i$. In particular, $\prod D_i^{k_i}=0$ if $\sum k_i > n$.
\end{lemma}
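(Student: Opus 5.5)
We reduce powers $D_i^k$ with $k\ge 2$ using the linear relations in $J$, and induct on a suitable measure to show that the process terminates.

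\textbf{Linear relation from $J$.} Fix $i$ and a monomial $\prod D_j^{k_j}$ with $k_i\ge 2$. Let $\tau$ be the support of the monomial, i.e.\ the set of $j$ with $k_j>0$. If $\tau$ does not span a cone of $\Sigma$, the monomial already lies in $I$. So assume $\tau$ spans a cone $\sigma\in\Sigma$. Since $\Sigma$ is smooth, the generators $\{e_j\}_{j\in\tau}$ extend to a lattice basis. Hence there is $u\in\Z^n$ with $\langle u,e_i\rangle=1$ and $\langle u,e_j\rangle=0$ for all other $j\in\tau$. The relation $\prod_j T_j^{\langle u,e_j\rangle}=1$ from $J$ then reads
\[
T_i=\prod_{j\notin\tau}T_j^{-\langle u,e_j\rangle}.
\]
Equivalently, $1-T_i^{-1}=1-\prod_{j\notin\tau}T_j^{\langle u,e_j\rangle}$.

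\textbf{Expanding the right-hand side.} Write each $T_j^{\pm1}$ with $j\notin\tau$ in terms of $D_j$: we have $T_j^{-1}=1-D_j$ and $T_j=(1-D_j)^{-1}$. The latter is a formal series, so we must control truncation; we do this by a downward induction. The expansion expresses $D_i$ as a combination of monomials in the $D_j$, $j\notin\tau$, each of degree $\ge1$, with no constant term. Multiplying by $D_i^{k_i-1}\prod_{j\ne i}D_j^{k_j}$ replaces one factor $D_i$ by terms of total degree $\ge1$ in new variables $j\notin\tau$. Each resulting monomial therefore has total degree $\ge\sum k_j$, and strictly more distinct variables in its support.

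\textbf{Induction.} Induct downward on total degree and, within a fixed degree, on the number of repeated factors. Monomials of degree exceeding some bound $N$ must be handled separately. Their support has size at most $n$ (otherwise they lie in $I$, since a smooth cone has at most $n$ rays), so a monomial of degree $>n$ has some exponent $\ge2$. Applying the reduction again raises the degree or enlarges the support. Since supports are bounded by $n$, every new term either lands in $I$ or increases the degree. We first prove that all monomials of degree $>n$ vanish, by downward induction on degree from a sufficiently large degree. For that base case we use that a monomial of very large degree acts by zero on the polynomial $\Ehr$, as in Proposition~\ref{prop:polyart}. This is only a statement modulo $\Ann$, however, so here is the main obstacle: we need it inside $I+J$ itself. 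I would handle it by noting that the reduction never decreases degree and strictly increases the pair (support size, degree) in a well-founded sense modulo $I$. To terminate, truncate $T_j=(1-D_j)^{-1}$ at order $M$; the error lies in the ideal generated by $D_j^{M+1}$. The key point is that $D_j^{M+1}$ can itself be rewritten with $\tau=\{j\}$ and a lattice vector $u$ as above, producing only terms with larger support. After at most $n$ enlargements of the support we are in $I$. Hence every monomial with a repeated index, or of degree $>n$, lies in $I+J$ modulo squarefree terms of degree at least as large.

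\textbf{Conclusion.} Squarefree monomials are either in $I$ or equal to $D_\sigma$ with $\dim\sigma$ equal to the degree. Since the degree never drops during the reduction, every face monomial that appears has $\dim\sigma\ge\sum k_i$. If $\sum k_i>n$, no cone has dimension $>n$, so the product is $0$.
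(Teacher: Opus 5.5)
Your reduction step, and especially your observation that each substitution strictly enlarges the support, are the right ingredients. However, the way you produce the substitution leaves a genuine gap.

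In $\Q[T_1^{\pm1},\dots,T_r^{\pm1}]/(I+J)$ the element $T_j=(1-D_j)^{-1}$ is not a polynomial in $D_j$. Expanding it as a series presupposes that $D_j$ is nilpotent modulo $I+J$, which is essentially what the lemma asserts. As you note yourself, Proposition~\ref{prop:polyart} only gives nilpotence modulo $\Ann(\Ehr)$, so invoking it would be circular.

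If you truncate at order $M$, the error in the reduction of $m=\prod_l D_l^{k_l}$ has the form $m'D_j^{M+1}g$. Here $m'=m/D_i$ and $g$ is an arbitrary Laurent polynomial in all the $T_l$, including $l\in\tau$. Your induction covers only monomials in the $D_l$, so it says nothing about such products. Rewriting $D_j^{M+1}$ ``with $\tau=\{j\}$'' does not repair this, for three reasons:
it creates new truncation errors;
it leaves the factor $g$ untouched;
the substitute for $D_j$ may involve variables indexed by $\tau$, so it need not enlarge the support of the whole term.

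The missing idea is to avoid inverses altogether by clearing denominators, as the paper does. Set $a_j=\langle u,e_j\rangle$. Multiplying the relation $T_i\prod_{j\notin\tau}T_j^{a_j}=1$ by the unit $T_i^{-1}\prod_{a_j>0}T_j^{-a_j}$ gives the exact identity
\[
\prod_{a_j<0}(1-D_j)^{-a_j}\equiv(1-D_i)\prod_{a_j>0}(1-D_j)^{a_j}\pmod{J}.
\]
Write $Q$ and $P$ for the products on the left and on the right. The identity then says $D_i\equiv F+D_iG$ with $F=P-Q$ and $G=1-P$, both polynomials without constant term in the $D_j$ with $j\notin\tau$. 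Hence $m\equiv m'F+mG$ is a \emph{finite} sum of monomials, each of degree at least $\sum k_l$ and with support strictly containing $\tau$.

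Monomials whose support has more than $n$ elements lie in $I$, since no cone of the smooth fan has more than $n$ rays. So a downward induction on the size of the support, starting from $n+1$, proves the lemma together with the degree bound. No high-degree base case is needed.

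The paper takes $\langle u,e_i\rangle=-1$ and phrases its induction in terms of multiplicity and degree. Its substitution has the same support-enlarging form, and that is what makes its induction well founded.
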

\begin{proof}
    Without loss of generality we can assume that  $k_1,\ldots,k_s>0$ and $k_{s+1}=\ldots=k_{r}=0.$ If $\rho_1,\ldots,\rho_s$ do not form a cone in $\Sigma$, then by relations in $I$ the product is equal to $0$ and there is nothing to prove.

    Now assume that $\rho_1,\ldots,\rho_s$ form a cone $\sigma$ in $\Sigma$. Let us define the multiplicity of the monomial $\prod D_i^{k_i}$ to be the sum $m=\sum(k_i-1)$. In particular, the square free monomials are exactly the ones which have multiplicity 0. If multiplicity of $D_1^{k_1}\ldots D_s^{k_s}$ is $0$, the monomial is equal to $D_\sigma$ and again there is nothing to prove.

    Let the multiplicity of the monomial $D_1^{k_1}\ldots D_s^{k_s}$ be $m>0$. We will represent the monomial as linear combination of monomials with multiplicity $m-1$ and monomials of strictly higher degree. Without loss of generality we can assume that $k_1>1$. Since $\rho_1,\ldots,\rho_s$ form a cone in a smooth fan $\Sigma$, their primitive generators $e_1,\ldots, e_s$ is a subset of a basis for the lattice $\Z^n$. Hence there exists a vector $u \in \Z^n$ such that 
    \[
\langle u, e_1\rangle = -1 \text{ and } \langle u, e_i \rangle = 0 \quad \text{ for } i =2,\ldots, s.
    \]
    Hence, by relation from ideal $J$ we get
    \[
   T_1^{-1}\prod_{i=s+1}^r T_i^{\langle u, e_i \rangle} = 1
    \]
    Using that $D_i = 1 - T_i^{-1}$ we obtain:
    \[
(1-D_1)\cdot \prod_{\langle u, e_i \rangle<0}(1-D_i)^{-\langle u, e_i \rangle} = \prod_{\langle u, e_i \rangle>0}(1-D_j)^{\langle u, e_j \rangle}.
    \]
After opening the brackets we express $D_1$ as a linear combination of monomials of degree 1 that do not involve $D_1,\ldots,D_s$ and monomials of higher degree that do not involve $D_2,\ldots,D_s$. Hence the monomial $D_1^{k_1}\ldots D_s^{k_s}$ can be written as a combination of monomials of strictly smaller multiplicity or strictly higher degree.
\end{proof}

\subsection{Morse theory for polytopes}\label{ssec:Morse}
There are still relations between face monomials $D_\sigma$ in what follows we will choose the additive basis of the group $K_\Sigma$.

Let $w\in N$ be a generic vector with respect to $\Sigma$. That is $w$ does not belong to the linear span of any cone $\tau\in\Sigma$. 
Now let $\sigma\in \Sigma(n)$ be any maximal cone and let $\tau$ be its facet. We will say that $\tau$ is $w$-positive facet of $\sigma$ if $w$ and $\sigma$ are contained in the same half-space with respect to linear span of $\tau$. If $w$ and $\sigma$ contained in the opposite half spaces, we say that $\tau$ is $w$-negative facet of $\sigma$.

We will define the $w$-index of a maximal cone $\sigma$ denoted by $\ind_w(\sigma)$ to be the number of its $w$-positive facets. Note that the  only cone of $w$-index $n$ is the one containing $w$ in its relative interior. Similarly the only cone of $w$-index 0 is the one containing $-w$ in its interior.

For every cone $\tau\in \Sigma$ there exists a unique maximal cone $\sigma \in \Sigma(n)$ such that for a point $x\in \tau$, we have $x+\varepsilon w \in \sigma$ for any $0< \varepsilon$ small enough. We will call such $\sigma$ \emph{the attracting maximal cone} of $\tau$. We will say that a cone $\tau\in \Sigma$ is a \emph{separatrix cone} (with respect to $w$) if it is the intersection of all $w$-positive facets of its attracting maximal cone $\sigma_\tau$. 
The separatrix cones of $\Sigma$ are in bijection with its maximal cones. Indeed, for every $\sigma\in \Sigma(n)$ the  intersection $\tau$ of all its $w$-positive facets is a separatrix cone and  $\sigma=\sigma_\tau$ is the attracting maximal cone of $\tau$.

For a separatrix cone $\tau\in \Sigma$, we will call the corresponding difference operator $D_\tau$  \emph{a separatrix monomial}.

\begin{lemma}\label{lem:separatices_generate}
Let $\Sigma$ be a smooth complete fan. Then the ring $\Sh(\Pm_\Sigma)/(I+J)$ (and therefore $K_\Sigma$) is additively generated by separatrix monomials $D_\tau$. 
\end{lemma}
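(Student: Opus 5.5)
By Lemma~\ref{lem:squarefree} and the preceding surjectivity lemma, the ring $\Sh(\Pm_\Sigma)/(I+J)$ is additively spanned by the face monomials $D_\tau$, $\tau\in\Sigma$. It therefore suffices to show that every non-separatrix face monomial $D_\tau$ can be rewritten modulo $I+J$ as a combination of two kinds of terms: face monomials $D_{\tau'}$ with $\dim\tau'>\dim\tau$, and face monomials of the same dimension that are ``closer'' to being separatrix. I will run a descending induction on $\dim\tau$, starting from $\dim\tau=n$. Combined with the secondary induction described below, this expresses everything through separatrix monomials. The base case $\dim\tau>n$ is vacuous, since such products vanish by Lemma~\ref{lem:squarefree}.

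Fix a non-separatrix cone $\tau$ with attracting maximal cone $\sigma=\sigma_\tau$. The cone $\tau$ is a face of $\sigma$, so it is the intersection of some facets of $\sigma$. Since $\tau$ is not separatrix, it is not the intersection of all the $w$-positive facets. Hence either some facet containing $\tau$ is $w$-negative, or some ray $\rho_j$ of $\sigma$ not in $\tau$ lies in every $w$-positive facet... I will make the first alternative the working one. By the definition of the attracting cone, $\tau+\varepsilon w\subset\sigma$, so $w$ lies in the cone $\tau$ over its relative interior. I would show this forces every facet of $\sigma$ containing $\tau$ to be $w$-positive. Non-separatrix then means that there is a ray $\rho_1\in\sigma(1)\setminus\tau(1)$ whose opposite facet $\sigma\setminus\rho_1$ is $w$-positive. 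Equivalently, in the dual basis $u$ of $\sigma$ (with $\langle u,e_1\rangle=-1$, $\langle u,e_i\rangle=0$ for the other rays of $\sigma$), the coordinate $\langle w,\cdot\rangle$ has a definite sign.

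Now I apply the relation from $J$ with this $u$, exactly as in the proof of Lemma~\ref{lem:squarefree}, multiplied by $D_\tau$. Since $\rho_1\notin\tau$, I expect to use the relation for the ray $\rho_j\in\tau$ chosen appropriately. Multiplying $(1-D_j)\prod_{\langle u,e_i\rangle<0}(1-D_i)^{-\langle u,e_i\rangle}=\prod_{\langle u,e_i\rangle>0}(1-D_i)^{\langle u,e_i\rangle}$ by $D_{\tau\setminus\rho_j}$ expresses $D_\tau$ as a sum of two kinds of terms. The first kind is $D_{\tau\setminus\rho_j}D_i$ with $\rho_i\notin\sigma$. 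The second kind consists of higher-degree terms, which by Lemma~\ref{lem:squarefree} reduce to face monomials of larger dimension and are handled by the outer induction. Each surviving degree-$\dim\tau$ term $D_{\tau'}$ with $\tau'=(\tau\setminus\rho_j)\cup\rho_i$ a cone is a face obtained by ``flowing'' along the linear functional $w$. I would introduce the potential $\langle \cdot, w\rangle$ evaluated at a barycentric point of $\tau$ (or a lexicographic order on attracting cones) and check that it strictly decreases for each such $\tau'$. Then a secondary induction on this finite order terminates at separatrix cones.

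The main obstacle is this monotonicity step: choosing $j$ and $u$ correctly, and proving that every neighbour $\tau'$ produced by the relation is strictly smaller in a well-founded order. This is the $K$-theoretic transcription of Timorin's Morse argument. It should go through because the relation is the same linear one as in cohomology, up to higher-order terms that only increase dimension. I would first verify the sign bookkeeping in the linear part, mirroring~\cite{timorin1999analogue}, and then note that the higher-degree corrections are absorbed by the outer induction.
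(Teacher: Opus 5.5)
Your skeleton matches the paper's: a descending induction on $\dim\tau$ via Lemma~\ref{lem:squarefree}, then multiplying the linear relation from $J$ dual to one ray of $\sigma_\tau$ by the complementary face monomial. The gap is in the step that makes it work, and your set-up of that step is incorrect. Write $w=\sum_k c_ke_k$ in the basis of rays of $\sigma=\sigma_\tau$. The condition $\sigma_\tau=\sigma$ forces $c_k>0$ for every ray of $\sigma$ not in $\tau$. So your ``first alternative'' never occurs, as you yourself then observe. The separatrix cone of $\sigma$ is $\tau^+=\operatorname{cone}(e_k : c_k<0)$, and it is therefore a face of $\tau$. Hence $\tau$ is non-separatrix if and only if $\tau$ contains a ray $\rho_1$ with $c_1>0$, i.e.\ $\rho_1\in\tau\setminus\tau^+$. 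Your reformulation asks for a ray in $\sigma(1)\setminus\tau(1)$ whose opposite facet is $w$-positive. That holds for every proper face with attracting cone $\sigma$, separatrix or not, so it cannot guide the choice. Indeed, you leave $\rho_j$ ``to be chosen appropriately''.

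The paper removes precisely such a ray $\rho_1\in\tau\setminus\tau^+$. It writes $D_\tau=D_{\tau^+}D_1D$ and multiplies the relation $1-D_1=\prod_{i>n}(1-D_i)^{-\lambda(e_i)}$, with $\lambda$ dual to $e_1$, by $D_{\tau^+}D$. Every surviving same-degree term is then $D_{\tau'}$ with $\tau'\supseteq\tau^+$ and $\tau'$ not a face of $\sigma$.

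Monotonicity then follows. In the star of $\tau^+$, the image of $w$ lies in the interior of the image of $\sigma$. So every other maximal cone containing $\tau^+$ is joined to $\sigma$ by an increasing chain of wall-crossings in the direction of $w$. Hence $\sigma_{\tau'}<\sigma_\tau$ in the order on maximal cones.

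The choice of ray is essential. Removing a ray of $\tau^+$ instead can produce higher cones. For example, in dimension $2$ take $\tau=\sigma=\operatorname{cone}(e_a,e_b)$ with $c_a<0<c_b$. Removing $\rho_a$ yields the neighbour of $\sigma$ across its $w$-negative wall $\rho_b$, which is larger than $\sigma$, so the rewriting can cycle.

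Your suggested potential, pairing $w$ with a barycentre of $\tau$, also fails. It needs an auxiliary inner product, since both live in $N_\R$. More importantly, it is not monotone. Take $\tau$ a ray of the cone containing $w$. The relation rewrites $D_\tau$ through all rays outside that cone that are not in $\ker\lambda$, and some of these can be much closer to $w$ than $\tau$ is.

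The correct well-founded order is the one on attracting maximal cones. Since the monotonicity step is exactly what you defer as the ``main obstacle'', the proposal does not yet prove the lemma.
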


\begin{proof}
By Lemma~\ref{lem:squarefree} it is enough to show that every  face monomial is a combination of separatrix monomials.   Let us define a total order on the set of maximal cones $\Sigma(n)$ in the following way. For a pair of distinct maximal cones $\sigma_1,\sigma_2$ sharing a facet $\tau$ we will say that $\sigma_1<\sigma_2$ if and only if $\tau$ is a $w$-positive facet of $\sigma_2$ (or equivalently $w$-negative facet of $\sigma_1$). We define a total order as any completion of the transitive closure of the relation above. Moreover, we will say that $\tau_1\leq \tau_2$ if and only if $\sigma_{\tau_1}\leq \sigma_{\tau_2}$.

We  will prove that if $\tau$ is not a separatrix cone, then the corresponding operator $D_\tau$ can be expressed via monomials 
of higher degree and monomials of the same degree corresponding to smaller faces using relations in ideals $I$ and $J$.

Let $\sigma_\tau$ be the attracting maximal cone of $\tau$ as before and let $\rho_1,\ldots \rho_n$ be rays of $\sigma_\tau$. Denote further by $\tau^+$ the separatrix cone corresponding to $\sigma_\tau$, we have $\tau^+$ is a face of $\tau$, so there exists a ray of $\sigma_\tau$ contained in $\tau$ but not in $\tau^+$. We can assume that this ray is $\rho_1$, so we get
\[
D_{\tau} = D_{\tau^+}\cdot D_1\cdot D
\]
for some difference monomial $D$.

Consider a character $\lambda$ such that $\lambda(e_1)=1$ and $\lambda(e_i)=0$ for $i=2,\ldots, n$. Then we have the corresponding relation in $J$:
\[
1-D_1= \prod_{i>n} (1-D_i)^{-\lambda(e_i)},
\]
and thus
\[
D_{\tau} = \sum_{i>n} \lambda(e_i) D_i + \text{ higher order terms.}
\]
and
\[
D_1 = \sum_{i>n} -\lambda(e_i) D_{\tau^+}\cdot D_i \cdot D + \text{ higher order terms.}
\]
modulo the ideal $J$. Since the sum on the right hand side is taken over $i>n$, the difference monomials appearing in the sum are square free. Moreover, they are strictly smaller then $D_\tau$ as the corresponding cones contain $\tau^+$ but are not the face of $\sigma_{\tau^+}=\sigma_\tau$.
\end{proof}
\begin{lemma}\label{lem:separatices_independens}
    Separatrix monomials $D_\tau$ are linearly independent in $K_\Sigma$.
\end{lemma}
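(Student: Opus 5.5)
The plan is to detect linear independence through the Frobenius pairing of Theorem~\ref{thm:diffalg}, passing to leading terms so that the question becomes a triangularity statement for a Morse-theoretic pairing matrix. Suppose $\sum_\tau c_\tau D_\tau=0$ in $K_\Sigma$, where the sum runs over separatrix cones. By Remark~\ref{rem:homology} and Lemma~\ref{lem:Dsig} this means
\[
\sum_\tau c_\tau\,\Ehr(F_\tau(P))\equiv 0\quad\text{as a function of }P\in\Pm_\Sigma .
\]
Since $\Ehr(F_\tau(P))$ is a polynomial in $P$ of degree $n-\dim\tau$, its top homogeneous part is the normalized volume $\vol(F_\tau(P))$. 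Let $k$ be the smallest dimension $\dim\tau$ with $c_\tau\neq0$. Taking the degree $n-k$ part of the identity gives
\[
\sum_{\dim\tau=k}c_\tau\,\vol(F_\tau(P))\equiv 0 .
\]
So it suffices to prove that, for each fixed $k$, the polynomials $\vol(F_\tau)$ with $\tau$ separatrix of dimension $k$ are linearly independent. This is a statement in the cohomological Pukhlikov--Khovanskii ring.

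To prove it, I pair with complementary operators. For a separatrix $\tau'$ let $\tau'^-$ be the intersection of all $w$-negative facets of $\sigma_{\tau'}$. Smoothness gives $\tau'\cap\tau'^-=0$ and $\tau'+\tau'^-=\sigma_{\tau'}$, so $\dim\tau'^-=n-\dim\tau'$. Let $\partial_{\tau'^-}$ be the product of the partial derivatives in the coordinates $H_P(e_i)$ over the rays $\rho_i\subset\tau'^-$. For $\dim\tau=\dim\tau'=k$, the face-restriction property of derivatives of $\vol$ (the continuous analogue of Lemma~\ref{lem:Dsig}) gives the following. If $\tau\cup\tau'^-$ does not span a cone, then $\partial_{\tau'^-}\vol(F_\tau)=0$. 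If it spans a cone, that cone is a maximal cone $\sigma$ and $\partial_{\tau'^-}\vol(F_\tau)=\vol(F_\sigma)=1$. The matrix $M_{\tau\tau'}=\partial_{\tau'^-}\vol(F_\tau)$ therefore has entries in $\{0,1\}$. The goal is to show it is unitriangular with respect to the total order on maximal cones from Lemma~\ref{lem:separatices_generate}, transported to separatrix cones via $\tau\mapsto\sigma_\tau$.

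The key combinatorial claim is the following. If $\tau$ is a face of a maximal cone $\sigma$, then $\sigma\le\sigma_\tau$. Dually, if $\tau'^-$ is a face of $\sigma$, then $\sigma\ge\sigma_{\tau'}$.

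For the first part, take $x$ in the relative interior of $\tau$ and $y$ in the interior of $\sigma$ close to $x$. Follow the ray $y+sw$ for $s\ge0$, perturbing to stay generic. By the choice of $y$ and genericity of $w$, for small $s>0$ the ray stays in $\sigma$. It crosses walls only in the $w$-increasing direction, so each crossing moves to a larger cone. As $y\to x$ the ray ends in $\sigma_\tau$, which gives $\sigma\le\sigma_\tau$. For the dual part, flow by $-w$: among the maximal cones containing $\tau'^-$, the cone $\sigma_{\tau'}$ is the one attracting under $-w$, since by definition $\tau'^-$ is cut out by the $w$-negative facets of $\sigma_{\tau'}$.

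Now suppose $M_{\tau\tau'}\neq0$. Then $\sigma=\tau+\tau'^-$ is a maximal cone, and the two parts of the claim give $\sigma_{\tau'}\le\sigma\le\sigma_\tau$. If $\sigma_{\tau'}=\sigma_\tau$, then $\sigma=\sigma_\tau$. In that case $\tau\subset\sigma_\tau$ must be complementary to $\tau'^-=\tau^-$, which forces $\tau=\tau'$; conversely $M_{\tau\tau}=1$. Hence $M$ is unitriangular, so invertible. All the $c_\tau$ with $\dim\tau=k$ therefore vanish, contradicting the choice of $k$, and the separatrix monomials are independent.

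The main obstacle is the flow argument in the claim. Two points need care there. One must check that the segment stays in cones containing $\tau$ until it leaves $\tau$'s star. One must also check that every wall crossed is crossed through a $w$-positive facet of the cone being entered, so that it is a step up in the order. This is where genericity of $w$ (not lying in the span of any cone) is used, exactly as in Timorin's argument~\cite{timorin1999analogue}. If a direct proof becomes messy, I would instead cite Timorin's independence of separatrix classes in the cohomology ring and rely only on the degree-filtration reduction in the first step.
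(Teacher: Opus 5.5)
Your proof is correct, but it takes a detour that the paper does not need.

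\textbf{How the two arguments differ.} The paper stays in $K_\Sigma$. It takes the term $D_{\tau_1}$ whose attracting cone $\sigma_{\tau_1}$ is largest and multiplies the whole combination by $D_{\tau_1^-}$. For $i\geq 2$ the rays of $\tau_1^-\cup\tau_i$ cannot lie in a common maximal cone $\sigma$, since that would give $\sigma_{\tau_1}\le\sigma\le\sigma_{\tau_i}<\sigma_{\tau_1}$. This is exactly your two-sided flow claim, which the paper dismisses as ``easy to see''. Hence $D_{\tau_1^-}D_{\tau_i}\in I$ vanishes whatever $\dim\tau_i$ is. Meanwhile $D_{\tau_1^-}D_{\tau_1}=D_{\sigma_{\tau_1}}$ sends $\Ehr$ to the lattice-point count of a vertex, so its $\ell_f$-value is $1$.

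You instead pass to top homogeneous parts, trading $\Ehr(F_\tau)$ for $\vol(F_\tau)$ and differences for derivatives, and run the triangularity one dimension at a time. This lets you fall back on Timorin's cohomological result, and it makes the combinatorial lemma explicit. The price is extra input about leading terms of Ehrhart polynomials of virtual faces. You also have to restrict to a fixed $\dim\tau$, which is forced only because a derivative pairing is constant only in complementary degree.

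The detour is in fact avoidable. On polynomials, $D_i$ acts as $-\partial_i$ plus higher-order terms, so your entry $M_{\tau\tau'}$ equals $(-1)^{n-k}\,(D_{\tau'^-}D_\tau\cdot\Ehr)$. Thus $M$ is just a diagonal block of the $K$-theoretic pairing the paper uses. There, vanishing comes from $I$ rather than from degree, and all dimensions are handled at once.

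\textbf{One intermediate claim needs fixing.} Suppose $\tau$ and $\tau'^-$ share a ray. Then $\tau\cup\tau'^-$ has fewer than $n$ rays, so even when it spans a cone, that cone is not maximal. In that case $\partial_{\tau'^-}\vol(F_\tau)$ involves some $\partial_i^2\vol$ and need not lie in $\{0,1\}$. This is harmless. A nonzero entry still forces a maximal cone $\sigma$ containing both $\tau$ and $\tau'^-$, so $\sigma_{\tau'}\le\sigma\le\sigma_\tau$ and $M$ is still triangular. Its diagonal entries are $\pm1$: under the paper's convention $H_P(\psi)=\min_{x\in P}\psi(x)$ one has $\partial\vol/\partial h_i=-\vol(F_i)$ for $h_i=H_P(e_i)$, which produces the sign.

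\textbf{The worries in your last paragraph are not real obstacles.} A wall crossed while moving along $w$ is automatically a $w$-positive facet of the cone being entered. The only extra genericity needed is to choose the starting point $y$ so that the segment avoids cones of codimension at least $2$.
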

\begin{proof}
   Consider a linear combination of the separatrix monomials $a_1D_{\tau_1}+\ldots +a_kD_{\tau_k}$. Without loss of generality we can assume that $D_{\tau_1}$ is the maximal monomial in the linear combination.
   Let $\tau_1^-$ be the the separatrix cone corresponding to $\sigma_{\tau_1}$ with respect to $-w$. In over words, $\tau_1^-$ is the intersection of all $w$-negative facets of $\sigma_{\tau_1}$. 

   It is easy to see that $D_{\tau_1^-} \cdot D_{\tau_i}= 0$ in $K_\Sigma$ for $i=2,\ldots,k$. No the other hand $D_{\tau_1^-}\cdot D_{\tau_1}= 1$ since it is square free and $\tau_1$ and $\tau_1^-$ generate maximal cone $\sigma_\tau$. So we get
   \[
  D_{\tau_1^-}\cdot (a_1D_{\tau_1}+\ldots +a_kD_{\tau_k}) = a_1 \ne 0.
   \]
Hence $a_1D_{\tau_1}+\ldots +a_kD_{\tau_k} \ne 0$ which finishes the proof.
\end{proof}

\noindent
Now we are ready to finish the proof of Theorem~\ref{thm:polalgrel}.
\begin{proof}[Proof of Theorem~\ref{thm:polalgrel}]
    By translation invariance of $\Ehr$ and Lemma~\ref{lem:Dsig} we know that there is a surjection $\Sh(\Pm_\Sigma)/(I+J)\to K_\Sigma$. Moreover, by Lemma~\ref{lem:separatices_generate} we know that separatrix monomials additevly generate  $\Sh(\Pm_\Sigma)/(I+J)$ and by Lemma~\ref{lem:separatices_independens} that their images are linearly independent in $K_\Sigma$. Hence the above surjection $\Sh(\Pm_\Sigma)/(I+J)\to K_\Sigma$ is an isomorphism.
\end{proof}

In what follows we will also have to work with  non-smooth fans $\Sigma$ and their $K$-rings $K_\Sigma$. It is convenient in this case to reduce to the smooth case by considering any smooth subdivison $\Sigma'$ of $\Sigma$ and using Proposition~\ref{prop:extended}. Indeed, $\Pm_\Sigma \subset \Pm_{\Sigma'}$ and the Ehrhart polynomial on $\Pm_\Sigma$ is the restriction of Ehrhart polynomial on $\Pm_{\Sigma'}$. Thus there is an abelian group $M_{\Sigma',\Sigma}$ with an epimorphism $\pi\colon K_{\Sigma'}\to M_{\Sigma',\Sigma}$ and a monomorphism $\iota\colon K_\Sigma\to M_{\Sigma',\Sigma}$ such  that
   $\pi(\widetilde \alpha \widetilde \beta) =\iota(\alpha \beta)$  whenever $\pi(\widetilde \alpha)=\iota(\alpha)$ and $\pi(\widetilde \beta)=\iota(\beta)$. This allows to perform the computations in $K_{\Sigma'}$ instead of $K_\Sigma$ which we understand better.

\section{Polyhedral models for $K$-theory}\label{sec:polymodels}
In this section we will apply the results of the previous sections to computation of $K$-theory of toric varieties as well as generalized flag varieties. In what follows we assume basic knowledge of toric geometry and geometry of flag varieties. We refer to~\cite{coxtoric} for further details on the former and to \cite{Brion05} for the latter.

\subsection{$K$-ring of a toric variety}\label{ssec:Ktoric}

For a smooth algebraic variety $X$ we denote by $K_0(X)$ free abelian group generated by isomorphism classes of coherent sheaves on $X$ up to the relation $[\Vm]+[\Um]=[\Wm]$ whenever there is a short exact sequence $0\to \Vm\to \Wm\to\Um\to0$. The subgroup generated by classes of vector bundles is denoted by $K^0(X)$. For a smooth variety $X$ the inclusion $K^0(X)\hookrightarrow K_0(X)$ is an isomorphism. In this case, we define the ring structure on $K_0(X)$ via $[\Vm]\cdot[\Um]=[\Vm\otimes \Um]$. In what follows we will work with rational $K$-theory $K_0(X)\otimes_\Z \Q$ (as $K$-theory of toric and flag varieties is torsion free the rational $K$-theory carries the same information). To simplify the notation we will denote the rational $K$-theory of $X$ by $K_0(X)$. Finally $K$-theory admits a proper push-forward, in particular for a trivial map $f\colon X\to \mathrm{pt}$, the pushforward
$f_*\colon K_0(X) \to K_0(\mathrm{pt}) \simeq \Q$
is a linear function on $K_0(X)$ which is equal to the holomorphic Euler characteristic on the classes of sheaves:
\[
f_*([\Fm]) = \chi(X,\Fm).
\]
For a more detailed introduction to $K$-theory we refer to \cite{manin1969lectures}.

Let $T\simeq (\C^*)^n$ be an algebraic torus, $M \simeq \Z^n$ its character lattice and  $N = \Hom_\Z(M, \Z)$ its dual lattice. We denote by $M_\R= M\otimes_\Z \R$, $N_\R= N\otimes_\Z \R$ vector spaces spanned by $M$ and $N$ respectively. Let further $\Sigma$ be a smooth complete fan and $Y_\Sigma$ the corresponding toric variety. We, denote by $\Sigma(1) =\{\rho_1,\ldots,\rho_r\}$ the set of rays of $\Sigma$ and by $D_1,\ldots, D_r$ the corresponding $T$-invariant divisors. Finally, for a cone $\sigma\in \Sigma$ of dimension greater than or equal to 1 we will denote by $X_\sigma$ the closure of the $T$-orbit corresponding to $\sigma$.

The main input from toric geometry for us comes from the computation of holomorphic Euler characteristic of line bundles on $Y_\Sigma$ in terms of combinatorics of polytopes. More concretely, every line bundle $\Lm$ on $Y_\Sigma$ can be linearized, and this is equivalent (as a sheaf) to $\Om(\sum_{i=1}^r  h_i D_i)$ for some $h_1,\ldots,h_r\in \Z$. Therefore, there is a surjection 
\[
\Pm_\Sigma \to \Pic(Y_\Sigma), \quad \Delta_{h}\mapsto \Om\left(\sum_{i=1}^r  h_i D_i\right).
\]
We will denote the line bundle corresponding to a polytope $\Delta\in\Pm_\Sigma$ by $\Lm_\Delta$. The following proposition is classical \cite{Toremb}, see \cite[Proposition 3.3]{chong2026note} for formulation in terms of virtual polytopes.
\begin{proposition}\label{prop:coh}
Let $\Delta\in \Pm_\Sigma$ be an integer virtual polytope and $\Lm_\Delta$ the corresponding line bundle on $Y_\Sigma$.  Then $\chi(Y_\Sigma, \Lm_\Delta) = \Ehr(\Delta)$.
\end{proposition}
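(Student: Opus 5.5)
The plan is to reduce to the case of a genuine (ample or nef) polytope and then extend by polynomiality. Both sides of the identity $\chi(Y_\Sigma,\Lm_\Delta)=\Ehr(\Delta)$ are polynomial functions on the lattice $\Pm_\Sigma\simeq\Z^r$. For the left side, $\chi(Y_\Sigma,\Om(\sum h_iD_i))$ is a polynomial in $(h_1,\dots,h_r)$ by Hirzebruch--Riemann--Roch: $\chi=\int \mathrm{ch}(\Lm)\,\mathrm{td}(Y_\Sigma)$, with $\mathrm{ch}(\Lm)=\exp(\sum h_i[D_i])$. For the right side, $\Ehr$ is polynomial on the linear family $\Pm_\Sigma$ by Proposition~\ref{prop:ehrhart-extension}. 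It is therefore enough to check the equality on a subset of $\Pm_\Sigma$ that is Zariski dense, for instance one containing a translate of a full-rank subcone of lattice points.

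On such a subset the identity is classical. For $\Delta\in\Pm^+_\Sigma$ an honest lattice polytope whose support function is linear on the cones of $\Sigma$, the bundle $\Lm_\Delta$ is globally generated. I would use two facts here:
\begin{itemize}
\item $H^0(Y_\Sigma,\Lm_\Delta)$ has a $T$-weight basis indexed by $\Delta\cap M$, via the standard description of sections of $\Om(\sum h_iD_i)$ as characters $u$ with $\langle u,e_i\rangle\ge -h_i$, with sign conventions matched to $H_P(\psi)=\min\psi$;
\item the higher cohomology vanishes (Demazure vanishing), which can be proved by the \v{C}ech computation on the affine cover $U_\sigma$, where each graded piece reduces to the cohomology of a convex, hence contractible, region.
\end{itemize}
Together these give $\chi=|\Delta\cap M|=\Ehr(\Delta)$.

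The main obstacle is that $\Sigma$ need not be projective, so $\Pm^+_\Sigma$ may not contain a full-dimensional cone of $\Pm_\Sigma$, and may even reduce to points. I would handle this in one of two ways.

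The first option is to bypass density and prove the identity directly for every $h$ via the \v{C}ech/Klyachko computation. The $u$-graded piece of $\chi(\Om(\sum h_iD_i))$ equals the reduced Euler characteristic of a subfan, namely the set of cones $\sigma$ with $\langle u,e_i\rangle<-h_i$ for all $\rho_i\in\sigma$. Summing over $u$ should reproduce the convex chain $\I_\Delta$ of the virtual polytope, evaluated on lattice points. This matches the Pukhlikov--Khovanskii description of a virtual polytope's characteristic function as an alternating sum over cones, and gives $\chi=\Ehr(\Delta)$ once one knows that $\Ehr$ is the lattice-point sum of the integer convex chain $\I_\Delta$.

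The second option is to pass to a projective smooth refinement $\Sigma'$, which exists by toric Chow's lemma and resolution. Pullback along $Y_{\Sigma'}\to Y_\Sigma$ preserves $\chi$ because $R\pi_*\Om=\Om$, and it corresponds to the inclusion $\Pm_\Sigma\subset\Pm_{\Sigma'}$, where density holds.

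I would take the second route as the cleaner argument, citing the projection formula for $\chi$.
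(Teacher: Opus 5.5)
Your proposal is correct. It differs from the paper only in that the paper gives no argument at all: it calls the statement classical, citing \cite{Toremb} for honest polytopes and \cite[Proposition~3.3]{chong2026note} for the formulation with virtual polytopes.

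What you supply is the reduction from the classical nef case to arbitrary virtual $\Delta$ on a possibly non-projective fan. In the nef case, global generation plus Demazure vanishing give $\chi=|\Delta\cap M|$. For the general case you proceed as follows:
\begin{enumerate}
\item Choose a smooth projective refinement $\Sigma'$.
\item Observe that $\chi(Y_{\Sigma'},\Lm_h)$ (by Hirzebruch--Riemann--Roch) and $\Ehr$ (by Proposition~\ref{prop:ehrhart-extension}) are both polynomial on $\Pm_{\Sigma'}$.
\item They agree on the lattice points of the open ample cone, hence everywhere on $\Pm_{\Sigma'}$.
\item Descend to $\Pm_\Sigma\subset\Pm_{\Sigma'}$ using $R\pi_*\Om_{Y_{\Sigma'}}=\Om_{Y_\Sigma}$ and the projection formula.
\end{enumerate}
You are right that the refinement is not cosmetic. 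There are smooth complete toric threefolds whose only nef line bundles are trivial, so $\Pm_\Sigma^+$ can consist of lattice points only, and density inside $\Pm_\Sigma$ itself fails.

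Your route is self-contained and makes explicit why non-projectivity is harmless; the paper allows non-projective fans throughout Section~\ref{sec:polyKring}. It is the same ``prove it on a full-dimensional cone, then propagate'' strategy as in Proposition~\ref{prop:virtual equality}, with polynomiality playing the role of the Brion/Weyl rational-function structure used there.

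The price is that the density step is intrinsically non-equivariant: the integer point transform is not polynomial in the support numbers. So it does not by itself yield the analogous Proposition~\ref{prop:eqcoh}. Your Option~1 would, since the \v{C}ech computation works weight by weight. However, as sketched, its last step rests on identifying the alternating sum over cones whose rays all violate their inequality at $u$ with the Pukhlikov--Khovanskii chain $\I_\Delta(u)$ of a virtual polytope, and that is where the real content lies. Choosing Option~2 for this statement is therefore the right call.
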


\begin{theorem}\label{thm:toricK}
Let $\Sigma$ be a smooth complete fan and let $Y_\Sigma$ be the corresponding toric variety. Then we have an isomorphism
\[
K_0(Y_\Sigma) \simeq K_\Sigma=\Sh(\Pm_\Sigma)/\Ann (\Ehr).
\]
\end{theorem}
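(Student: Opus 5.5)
The plan is to apply Theorem~\ref{thm:goralgrec} to $A=K_0(Y_\Sigma)$, rational coefficients, with the functional $\ell=f_*$, i.e.\ $\ell([\Fm])=\chi(Y_\Sigma,\Fm)$. Three inputs are needed: (a) $K_0(Y_\Sigma)$ is generated as a $\Q$-algebra by classes of line bundles, which are invertible; (b) the pairing $\langle x,y\rangle=\chi(x\cdot y)$ is nondegenerate; (c) the function $f$ produced by Theorem~\ref{thm:goralgrec} is the Ehrhart polynomial on $\Pm_\Sigma$.

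For (a), I would use that $Y_\Sigma$ is smooth and has a cell decomposition by $T$-orbits. Hence $K_0(Y_\Sigma)$ is spanned by the structure sheaves $[\Om_{X_\sigma}]$ of orbit closures. Since $X_\sigma$ is the transversal intersection of the divisors $D_i$ with $\rho_i\in\sigma$, the Koszul resolution gives $[\Om_{X_\sigma}]=\prod_{\rho_i\in\sigma}(1-[\Om(-D_i)])$. So the line bundles $[\Om(D_i)]^{\pm1}$ generate, and these are images of the basis polytopes $P_i$ of $\Pm_\Sigma$.

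For (b), I would invoke Poincaré (Serre) duality in $K$-theory of a smooth complete variety. Concretely, $K_0(Y_\Sigma)\otimes\Q$ is finite-dimensional with basis the orbit-closure classes. The pairing $\chi(\Fm\otimes\Gm)$ is perfect on $K_0$ of a smooth projective cellular variety; for a complete non-projective toric variety I would instead argue directly. Take the basis $[\Om_{X_\sigma}]$ for the separatrix cones $\sigma$ attached to a generic $w$, and the dual family attached to $-w$. Then the pairing matrix is unitriangular, since $\chi(\Om_{X_\tau}\otimes\Om_{X_{\tau'}})$ equals $1$ when $\tau,\tau'$ span a maximal cone (a torus fixed point) and vanishes when they do not span a cone at all. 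This mirrors Lemma~\ref{lem:separatices_independens} and is the step I expect to be the main obstacle. The remaining entries must be ordered using the total order from Lemma~\ref{lem:separatices_generate}, and one has to check that intersections of orbit closures are again orbit closures with $\chi(\Om_{X_\rho})=1$. An alternative is to cite nondegeneracy of the Euler pairing for smooth complete varieties with a cellular decomposition.

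For (c), the surjection $\Pm_\Sigma\to\Pic(Y_\Sigma)$, $\Delta\mapsto\Lm_\Delta$, is a group homomorphism. The map $\Sh(\Pm_\Sigma)\to K_0(Y_\Sigma)$, $T_\Delta\mapsto[\Lm_\Delta]$ (with sign conventions adjusted), is therefore a surjective ring homomorphism. The function $\Delta\mapsto\chi(Y_\Sigma,\Lm_\Delta)$ equals $\Ehr(\Delta)$ by Proposition~\ref{prop:coh}. The proof of Theorem~\ref{thm:goralgrec} then applies verbatim with the lattice $\Pm_\Sigma\simeq\Z^r$ in place of $\Z^s$: the kernel of this map is the kernel of the pulled-back pairing $\langle s,t\rangle=(st\cdot\Ehr)(0)$, which equals $\Ann(\Ehr)$. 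This gives $K_0(Y_\Sigma)\simeq\Sh(\Pm_\Sigma)/\Ann(\Ehr)=K_\Sigma$.
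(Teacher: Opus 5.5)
Your proposal is correct and is essentially the paper's proof: apply Theorem~\ref{thm:goralgrec} with $\ell=\chi$, using that line bundles generate $K_0(Y_\Sigma)$, that the Euler pairing is nondegenerate because $Y_\Sigma$ is cellular, and Proposition~\ref{prop:coh} to identify $f$ with $\Ehr$. Running that proof directly on $\Sh(\Pm_\Sigma)$ also cleanly handles the fact that $\Pm_\Sigma\to\Pic(Y_\Sigma)$ is only surjective, which the paper calls a ``correspondence''; one slip is that the orbit-closure classes span $K_0(Y_\Sigma)$ but are not a basis, so your optional direct proof of nondegeneracy would also need the separatrix classes to span (e.g.\ because the relations $I+J$ hold in $K_0(Y_\Sigma)$, so Lemma~\ref{lem:separatices_generate} applies).
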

\begin{proof}
First, since $Y_\Sigma$ has an algebraic cell decomposition, the Euler characteristic provides the Frobenius duality on $K_0(Y_\Sigma)$, i.e. the Euler pairing
\[
\langle \Fm,\Gm\rangle_{Eu} := \chi(Y_\Sigma,\Fm\otimes \Gm), \quad \Fm,\Gm\in K_0(Y_\Sigma)
\]
is non-degenerate. Moreover, $K_0(Y_\Sigma)$ is generated by Picard lattice $\Pic(Y_\Sigma)$. Therefore, by Theorem~\ref{thm:goralgrec} we get $K_0(Y_\Sigma)\simeq \Sh( \Pic(Y_\Sigma))/\Ann(\chi)$.
Finally the theorem follows from the correspondence of $\Pm_\Sigma$ with Picard lattice $\Pic(Y_\Sigma)$ and Proposition~\ref{prop:coh}.
\end{proof}

As a corollary of Theorem~\ref{thm:toricK} and Theorem~\ref{thm:polalgrel} we obtain the following statement.
\begin{corollary}\label{cor:Krelations}
Let $\Sigma$ be a smooth, complete fan and $Y_\Sigma$ the corresponding toric variety. Then
\[
K_0(Y_\Sigma)\simeq \Q[T_1^{\pm 1},\ldots,T_r^{\pm 1}]/(I+J),
\]
where $I$ is generated by monomials $D_{i_1}\cdots D_{i_t}$ such that $\rho_{i_1},\ldots, \rho_{i_t}\in \Sigma(1)$ are distinct and do not form a cone in $\Sigma$ and $J = \left\langle \prod_{i=1}^r T_i^{\langle u,e_i\rangle}-1 \colon u\in \Z^n \right\rangle$.
\end{corollary}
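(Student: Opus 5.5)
The corollary follows by composing the two isomorphisms already established. Theorem~\ref{thm:toricK} gives $K_0(Y_\Sigma)\simeq K_\Sigma=\Sh(\Pm_\Sigma)/\Ann(\Ehr)$. Theorem~\ref{thm:polalgrel} gives $K_\Sigma\simeq\Q[T_1^{\pm1},\dots,T_r^{\pm1}]/(I+J)$. The only real work is checking that the two descriptions use compatible coordinates.

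For smoothness and completeness of $\Sigma$, evaluating support functions on $e_1,\dots,e_r$ identifies $\Pm_\Sigma$ with $\Z^r$, with basis $P_1,\dots,P_r$. Under the surjection $\Pm_\Sigma\to\Pic(Y_\Sigma)$, the basis element $P_i$ goes to $\Om(D_i)$, where $D_i$ is the divisor of $\rho_i$. Hence $\Sh(\Pm_\Sigma)=\Q[T_1^{\pm1},\dots,T_r^{\pm1}]$, and $T_i$ corresponds to the class of the line bundle $\Lm_{P_i}$, up to the sign convention $T_x=$ shift by $-x$.

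I would then transport the ideals $I$ and $J$ through the isomorphism of Theorem~\ref{thm:toricK}. That isomorphism came from Theorem~\ref{thm:goralgrec}, and Proposition~\ref{prop:coh} identifies $\chi(Y_\Sigma,\Lm_\Delta)$ with $\Ehr(\Delta)$. Consequently the annihilator of $\chi$ in $\Sh(\Pm_\Sigma)$ is literally $\Ann(\Ehr)$. Theorem~\ref{thm:polalgrel} says this annihilator equals $I+J$, with $I$ generated by the products $(1-T_{i_1}^{-1})\cdots(1-T_{i_t}^{-1})=D_{i_1}\cdots D_{i_t}$ over non-cones.

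There is one subtle point to verify. In Theorem~\ref{thm:goralgrec} one works with $\Sh(\Pic)$, whereas here we work with $\Sh(\Pm_\Sigma)$, and the map $\Pm_\Sigma\to\Pic$ has kernel $M$, consisting of the translations. Two ways of handling this agree, and I would check that they do:
\begin{itemize}
\item Pulling back along $\Pm_\Sigma\to\Pic$ simply adds the relations $J$, so one can argue directly with Theorem~\ref{thm:goralgrec} applied to the generators $\Lm_{P_i}$, giving the function $n\mapsto\chi(\Lm_{\sum n_iP_i})=\Ehr(\sum n_iP_i)$ on $\Z^r$.
\item Alternatively, use Theorem~\ref{thm:toricK} as stated, which already phrases everything on $\Pm_\Sigma$.
\end{itemize}
Given this, the statement is a direct composition, and I expect no genuine obstacle beyond this bookkeeping.
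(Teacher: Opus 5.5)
Your proposal is correct and follows the paper's route: the paper obtains this corollary simply by composing Theorem~\ref{thm:toricK} with Theorem~\ref{thm:polalgrel}, exactly as you do. Your extra check that passing along $\Pm_\Sigma\to\Pic(Y_\Sigma)$, whose kernel is $M$, only adds the translation relations $J$ is correct bookkeeping that the paper leaves implicit.
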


We will finish this section with a description of the classes of structure sheaves $\Om_{X_\sigma}$ of orbit closures in the polytope $K$-ring $K_\Sigma$.
\begin{proposition}
Let $\Sigma$ be a smooth fan and $Y_\Sigma$ the corresponding toric variety. Let further $\sigma\in \Sigma$ be a cone and $X_\sigma \subset Y_\Sigma$ the corresponding orbit closure. Then the class of  $\Om_{X_\sigma}$ is represented in $K_\Sigma$ by the operator $D_\sigma = \prod_{\rho_i\in \sigma}D_i$.
\end{proposition}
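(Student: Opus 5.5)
Under the isomorphism of Theorem~\ref{thm:toricK}, the shift operator $T_\Delta$ (shift by $-\Delta$) corresponds to the class of the line bundle $\Lm_\Delta^{\pm1}$. First I will fix the sign convention by comparing the functionals. The Frobenius functional is $\ell([t]) = t\cdot \Ehr(0)$, and the isomorphism is characterized by
\[
\chi\bigl(Y_\Sigma,\phi([t])\bigr) = (t\cdot \Ehr)(0),
\]
with $T_\Delta\mapsto \Lm_\Delta^{-1}$; equivalently, $(t\cdot\Ehr)(P)=\chi(Y_\Sigma,\phi([t])\otimes\Lm_P)$ for all $P\in\Pm_\Sigma$. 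Since $K_\Sigma\simeq K_0(Y_\Sigma)$ with nondegenerate Euler pairing, an element $x\in K_0(Y_\Sigma)$ is determined by the function $P\mapsto \chi(Y_\Sigma, x\otimes \Lm_P)$ (Remark~\ref{rem:homology}). So it suffices to show
\[
\chi(Y_\Sigma,\Om_{X_\sigma}\otimes\Lm_P)=(D_\sigma\cdot\Ehr)(P)\qquad\text{for all } P\in\Pm_\Sigma .
\]

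For a single ray the computation is direct. $T_i=T_{P_i}$ corresponds to $\Lm_{P_i}^{-1}=\Om(-D_i)$, with $D_i$ now the divisor; the sign is to be checked against Proposition~\ref{prop:coh}, and if it is off the same argument works with $T_i^{-1}$ instead. The exact sequence $0\to\Om(-D_i)\to\Om\to\Om_{D_i}\to0$ gives $[\Om_{D_i}]=1-T_i^{-1}$ or $1-T_i$, matching $D_i$. For a cone $\sigma$ with rays $\rho_{i_1},\dots,\rho_{i_t}$, smoothness gives that the divisors $D_{i_1},\dots,D_{i_t}$ meet transversally in $X_\sigma$. Hence the Koszul complex shows
\[
[\Om_{X_\sigma}]=\prod_j[\Om_{D_{i_j}}],
\]
because $\mathrm{Tor}_{>0}$ vanishes for transversal intersections of smooth divisors.

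As a consistency check, and an alternative route, I can instead verify the Euler characteristic directly. $X_\sigma$ is the smooth toric variety of the star fan of $\sigma$, and $\Lm_P|_{X_\sigma}$ corresponds to the face $F_\sigma$ of $P$, up to translation. By Proposition~\ref{prop:coh} applied to $X_\sigma$, we get $\chi(X_\sigma,\Lm_P|_{X_\sigma})=\Ehr(F_\sigma)$, which equals $(D_\sigma\cdot\Ehr)(P)$ by Lemma~\ref{lem:Dsig}. This route needs care for virtual $P$ and faces, but Lemma~\ref{lem:Dsig} already handles the virtual case.

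The main obstacle is bookkeeping: matching the sign convention $T_x$ = shift by $-x$ with $\Lm$ versus $\Lm^{-1}$, and ensuring that the identification of $F_\sigma$ with the restricted line bundle is correct up to a translation, which $\Ehr$ ignores.
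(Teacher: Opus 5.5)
Your second route is exactly the paper's proof. You reduce, via nondegeneracy of the Euler pairing and generation of $K_0(Y_\Sigma)$ by line bundles, to $\chi(Y_\Sigma,\Om_{X_\sigma}\otimes\Lm_P)=(D_\sigma\cdot\Ehr)(P)$. You then combine $\chi(X_\sigma,\Lm_P|_{X_\sigma})=\Ehr(F_\sigma)$ with Lemma~\ref{lem:Dsig}.

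Your primary route is genuinely different. It proves $[\Om_{X_\sigma}]=\prod_j\bigl(1-[\Om(-D_{i_j})]\bigr)$ directly in $K_0(Y_\Sigma)$ from the Koszul complex: on each chart $U_{\sigma'}\cong\C^n$ with $\sigma'$ maximal, the local equations of $D_{i_1},\dots,D_{i_t}$ are distinct coordinates or units, hence a regular sequence. So it uses neither Lemma~\ref{lem:Dsig} nor any Euler characteristic computation on $X_\sigma$. Combined with Proposition~\ref{prop:coh} applied to $X_\sigma$, it would even give a geometric proof of Lemma~\ref{lem:Dsig} for $P\in\Pm_\Sigma$. The same Koszul argument also explains why products over non-cones vanish. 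The paper's route, by contrast, identifies a class only through the function $P\mapsto\chi(x\otimes\Lm_P)$. That is what generalizes to Schubert varieties in Theorem~\ref{thm:schubert classes}, where no description by transversal divisors is available.

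The one point you must fix in the primary route is the sign, where your intermediate claim is backwards. With $H_P(\psi)=\min_{x\in P}\psi(x)$ and Proposition~\ref{prop:coh}, one has $\Lm_P=\Om\bigl(-\sum_i H_P(e_i)D_i\bigr)$. For example, on $\mathbb{P}^1$ (rays $e_1=1$, $e_2=-1$) the basis element $P_1$ is the virtual segment $[1,0]$, with $\Ehr(P_1)=0=\chi(\Om(-1))$. So $\Lm_{P_i}=\Om(-D_i)$, not $\Om(D_i)$. Hence $T_i\mapsto[\Om(D_i)]$ and $T_i^{-1}$ (the shift by $+P_i$) $\mapsto[\Om(-D_i)]$, giving $D_i=1-T_i^{-1}\mapsto[\Om_{D_i}]$ as required.

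Your fallback ``if the sign is off, use $T_i^{-1}$ instead'' is not a legitimate move. $D_i$ is fixed as $1-T_i^{-1}$, so had the sign come out the other way the proposition would simply be false. The sign has to be determined, and it comes out in your favor. In the paper's route this bookkeeping is absorbed into Proposition~\ref{prop:coh} and Lemma~\ref{lem:Dsig}.
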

\begin{proof}
Indeed, since $K_0(Y_\Sigma)$ is generated by $\Pic(Y_\Sigma)$, it is enough to check that $\chi(Y_\Sigma,\Om_{X_\sigma}\otimes \Lm_\Delta)=D_\sigma\cdot \Ehr(\Delta)$ for any $\Delta\in \Pm_\Sigma$. Hence the proposition follows from Lemma~\ref{lem:Dsig} and the fact that $\chi(Y_\Sigma,\Om_{X_\sigma}\otimes \Lm_\Delta)=\chi(X_\sigma, \Lm_\Delta|_{X_\sigma}) = \Ehr(F_\sigma)$, where $F_\sigma$ is a face of $\Delta$ corresponding to $\sigma$.
\end{proof}

\subsection{Equivariant $K$-theory}\label{ssec:eqK}
In this subsection we give a polyhedral description for the $T$-equivariant $K$-theory. For this we will use the formalism developed in Subsection~\ref{ssec:dualitywithcoefficients}. Relations are described in \cite{VezzosiVistoli03}.

First, recall that for a variety with a $T$-action the equivariant $K$-theory $K_0^T$ is the quotient of free abelian group generated by isomorphism classes of $T$-\emph{equivariant} coherent sheaves modulo short exact sequence relation. 
The subgroup generated by classes of equivariant vector bundles is denoted by $K^0_T(X)$. For a smooth variety $X$ the inclusion $K^0_T(X)\hookrightarrow K_0^T(X)$ is an isomorphism. In this case, we define the ring structure on $K_0^T(X)$ via $[\Vm]\cdot[\Um]=[\Vm\otimes \Um]$. 

Finally equivariant $K$-theory admits a proper pushforward. To define the pushforward with respect to a trivial map $f\colon X\to \mathrm{pt}$, recall that $K_0^T(\mathrm{pt}) \simeq \Z[\Lambda]$ is the representation ring of the torus $T$. Then the pushforward map
$f_*\colon K_0^T(X) \to K_0^T(\mathrm{pt})$ is given by the equivariant Euler characteristic
\[
f_*([\Fm]) = \chi^T(X,\Fm) = \sum (-1)^i H^i(X,\Fm). 
\]
Here we view $H^i(X,\Fm)$ as a representation of a torus, thus the sum in the right-hand side belongs to $\Z[M]$.
For a more detailed introduction to equivariant $K$-theory in algebraic setting we refer for example to \cite[Chapter 5]{chriss1997representation}.

Now let $Y_\Sigma$ be a smooth complete toric variety associated to a fan $\Sigma\subset N$. Similar to the classical $K$-theory, the equivariant $K$-theory is generated by the classes of equivariant line bundles. Each equivariant line bundle bundle on $Y_\Sigma$ is given as $\Om(\sum_{i=1}^rh_i D_i)$ and thus is defined by a (virtual) polytope. In fact one has an isomorphism of abelian groups
\[
\Pm_\Sigma \to \Pic^T(Y_\Sigma), \quad P_{h}\mapsto \Om\left(\sum_{i=1}^r  h_i D_i\right).
\]
With a slight abuse of the notation, we will denote by $\Lm_P$ the equivariant line bundle corresponding to a (virtual) polytope $P\in \Pm_\Sigma$. Moreover, the equivariant Euler pairing is a non-degenerate pairing on the equivariant $K$-theory of smooth complete toric variety. Finally, we have the following proposition.

\begin{proposition}\label{prop:eqcoh}
Let $P\in \Pm_\Sigma$ be an integer virtual polytope and $\Lm_P$ the corresponding line bundle on $Y_\Sigma$. Then the equivariant Euler characteristic of $\Lm_P$ is given by:
\[
\chi^T(Y_\Sigma, \Lm_P) = \sum_{u\in P\cap M}e^u.
\]
\end{proposition}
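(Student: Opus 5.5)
We prove Proposition~\ref{prop:eqcoh} with the standard Čech-cohomology argument for toric line bundles, keeping track of torus weights. The virtual case needs separate care.

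\emph{Step 1: compute each graded piece.} Write $\Lm_P=\Om(\sum h_iD_i)$ with $h_i=-H_P(e_i)$, up to the sign convention fixing the identification $\Pm_\Sigma\simeq\Pic^T(Y_\Sigma)$. Cover $Y_\Sigma$ by the $T$-stable affine charts $U_\sigma$, $\sigma\in\Sigma(n)$. Each $H^0(U_\sigma,\Lm_P)$ is a $T$-module that splits into one-dimensional weight spaces $\C\cdot\chi^u$, $u\in M$. The Čech complex is $T$-equivariant, so $H^i(Y_\Sigma,\Lm_P)=\bigoplus_{u\in M}H^i(Y_\Sigma,\Lm_P)_u$. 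For each $u$, the $u$-graded piece of the Čech complex is the simplicial cochain complex computing the reduced cohomology of an explicit subset of $|\Sigma|$:
\[
Z_u=\bigcup\{\sigma\in\Sigma : \langle u,e_i\rangle< H_P(e_i)\ \text{for some ray } \rho_i\subset\sigma\},
\]
that is, the union of cones on which the linear function $u$ fails to dominate the cone-wise linear function $H_P$. Consequently
\[
\chi\bigl(Y_\Sigma,\Lm_P\bigr)_u=1-\chi(Z_u),
\]
with the convention that the empty set has Euler characteristic $0$. This is the formula of Demazure and the one in \cite[Ch.~9]{coxtoric}.

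\emph{Step 2: match the count with the convex chain of $P$.} We must show that $1-\chi(Z_u)$ equals the value $\I_P(u)$ of the convex chain of the virtual polytope $P$ at $u$. Then summing over $u$ gives $\chi^T(Y_\Sigma,\Lm_P)=\sum_u \I_P(u)e^u$, which is exactly what $\sum_{u\in P\cap M}e^u$ means for a virtual polytope.

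For an honest polytope $P\in\Pm_\Sigma^+$ this is easy. If $u\in P$, then $Z_u$ is empty, so the contribution is $1$. If $u\notin P$, then $Z_u$ is a proper nonempty region that is star-shaped in the relevant sense and hence contractible, so the contribution is $0$.

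\emph{Step 3: the virtual case, which is the main obstacle.} Here I would argue by linearity rather than topology. Both sides are determined by their behaviour under translation by the generators $P_i$ of $\Pm_\Sigma$.

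On the geometric side, the exact sequence $0\to\Lm_{P}\to\Lm_{P+P_i}\to\Lm_{P+P_i}|_{D_i}\to0$ expresses $\chi^T(\Lm_{P+P_i})-\chi^T(\Lm_P)$ as $\chi^T$ of a line bundle on the smaller toric variety $D_i$. On the polyhedral side, the convex-chain identity from Lemma~\ref{lem:Dsig}, applied at the level of $\I_P$ rather than $\Ehr$, gives the corresponding face term.

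One then runs induction on $\dim Y_\Sigma$. The orbit closures $D_i$ are smooth complete toric varieties, and the base case is a point. Induction starts from an ample $P$ for projective $\Sigma$. For non-projective $\Sigma$ there is no ample class to start from, so I would instead rely directly on the Step~1 formula. It holds for arbitrary $h$, and the identity $1-\chi(Z_u)=\I_P(u)$ then follows from the Pukhlikov--Khovanskii description of the convex chain of a virtual polytope via its support function, as in \cite[Proposition~3.3]{chong2026note}.
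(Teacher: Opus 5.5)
The paper gives no proof of this proposition. It treats it as classical, like Proposition~\ref{prop:coh}, which is attributed to \cite{Toremb} and \cite[Proposition~3.3]{chong2026note}. So your argument has to stand on its own, and Step~1 does not as written.

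Your set $Z_u$ is a union of closed cones, each of which contains the origin. It is therefore star-shaped about $0$ and contractible whenever nonempty. Your formula would then force every weight multiplicity $\chi(Y_\Sigma,\Lm_P)_u$ to be $0$ or $1$, which is false as soon as higher cohomology appears. Concretely, on $\mathbb P^1$ take $P$ with $\Lm_P\cong\Om(-2)$ and $u=0$. Both rays are violated, so $Z_0=\R$, and you get $0$ instead of $-1$.

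The correct set (Demazure; \cite[Ch.~9]{coxtoric}) is
\[
V_u=\bigcup_{\sigma\in\Sigma}\mathrm{conv}\{e_i : \rho_i\in\sigma(1),\ \langle u,e_i\rangle<H_P(e_i)\}.
\]
It is homotopy equivalent to the open conical region $\{v\neq 0 : \langle u,v\rangle<H_P(v)\}$, and in particular it does not contain the origin. In the example it is two points, and $1-\chi(S^0)=-1$.

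With this set your honest case in Step~2 survives, but for a different reason. For $P\in\Pm_\Sigma^+$ the function $H_P-u$ is concave, so the region is convex, hence contractible when nonempty. It is empty exactly when $u\in P$. The non-projective fallback in Step~3 inherits the error. Even after the correction, it still reduces to identifying $1-\chi(V_u)$ with $\I_P(u)$ for virtual $P$, and you cite that identification rather than prove it.

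You do not need that fallback. Your linearity argument works for every smooth complete fan if you anchor it at $P=0$ instead of at an ample class. The point polytope lies in $\Pm_\Sigma^+$ for any fan, and $\chi^T(Y_\Sigma,\Om)=1=\sigma(\{0\})$ because $H^{>0}(Y_\Sigma,\Om)=0$.

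Since $P_1,\dots,P_r$ is a basis of $\Pm_\Sigma$, it suffices that $\chi^T(Y_\Sigma,\Lm_P)-\sigma(P)$ is invariant under each $P\mapsto P+P_i$. That invariance follows from three ingredients:
\begin{itemize}
\item the restriction sequence to $D_i$;
\item induction on dimension;
\item the chain identity $\I_P-\I_{P+P_i}=\I_{F_i(P)}$ on $M$, which is the same input that underlies Lemma~\ref{lem:Dsig}.
\end{itemize}
The difference is then constant, and it vanishes at $P=0$, so it vanishes identically.

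Two bookkeeping fixes are needed. First, with your convention $h_i=-H_P(e_i)$ you have $\Lm_{P+P_i}=\Lm_P(-D_i)$. The sequence is therefore $0\to\Lm_{P+P_i}\to\Lm_P\to\Lm_P|_{D_i}\to 0$, matching $D_i=1-T_i^{-1}$; yours is reversed. Second, to apply the inductive hypothesis on $D_i$ you must twist by a character $u_0$ with $\langle u_0,e_i\rangle=H_P(e_i)$. This is because the face $F_i(P)$ and the weights of $\Lm_P|_{D_i}$ lie in a translate of $\rho_i^\perp$.
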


Recall that we denote by $\sigma\colon \Pm_\Sigma \to \Z[M]$ the function $P\mapsto \sum_{u\in P\cap M}e^u$. As an immediate corollary of Theorem~\ref{thm:dualitywithcoef} we get the following description of the equivariant $K$-theory of a smooth complete toric variety.

\begin{theorem}\label{thm:toricKeq}
Let $\Sigma$ be a smooth, complete fan and let $Y_\Sigma$ be the corresponding toric variety. Then we have an isomorphism $K_0^T(Y_\Sigma) \simeq \Sh(\Pm_\Sigma)/\Ann (\sigma)$.
\end{theorem}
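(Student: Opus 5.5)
The plan is to run the argument of Theorem~\ref{thm:toricK} verbatim, using the version of Theorem~\ref{thm:goralgrec} with coefficients, namely the second half of Theorem~\ref{thm:dualitywithcoef}. We take $R=K_0^T(Y_\Sigma)$ and the $K_0^T(\pt)=\Z[M]$-valued functional $\ell=f_*=\chi^T(Y_\Sigma,\cdot)$. The proof then has three inputs: generation of $R$ by invertible elements, non-degeneracy of the pairing $\langle\Fm,\Gm\rangle=\chi^T(Y_\Sigma,\Fm\otimes\Gm)$, and the identification of the resulting function on the lattice with $\sigma$.

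\textbf{Step 1 (generators).} Equivariant line bundles are invertible in $R$, and $R$ is generated by them. Via the isomorphism $\Pm_\Sigma\to\Pic^T(Y_\Sigma)$, $P_h\mapsto\Om(\sum h_iD_i)$, we obtain a surjection $\Sh(\Pm_\Sigma)\to R$. We send $T_i$ to $[\Lm_{P_i}]$, where $P_1,\dots,P_r$ is the basis of $\Pm_\Sigma\simeq\Z^r$ from Subsection~\ref{ssec:structureK}. Note that $\Pm_\Sigma$ here, rather than $\Pic(Y_\Sigma)$, is the correct lattice: translations of polytopes change the linearization, so no quotient by $M$ is taken. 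This is exactly why the ideal $J$ disappears equivariantly. The translations are instead absorbed into the coefficients, since $\sigma(P+u)=e^u\sigma(P)$.

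\textbf{Step 2 (non-degeneracy).} We invoke the non-degeneracy of the equivariant Euler pairing stated just before Proposition~\ref{prop:eqcoh}. If I had to justify it, I would use the $T$-stable cell decomposition: $R$ is a free $\Z[M]$-module with basis the orbit-closure structure sheaves $[\Om_{X_\sigma}]$, $\sigma\in\Sigma(n)$, and there is a dual basis with respect to $\chi^T$, for example obtained by the Morse/separatrix construction of Lemma~\ref{lem:separatices_independens}. A unitriangular Gram matrix then gives non-degeneracy in the sense of Subsection~\ref{ssec:dualitywithcoefficients}. This step is the main obstacle, since it is the only genuinely geometric input. The rest is formal.

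\textbf{Step 3 (identifying $f$).} By Theorem~\ref{thm:dualitywithcoef}, $R\simeq\Sh(\Z^r)/\Ann(f)$ with
\[
f(n_1,\dots,n_r)=\chi^T\bigl(Y_\Sigma,\Lm_{P_1}^{n_1}\otimes\cdots\otimes\Lm_{P_r}^{n_r}\bigr)=\chi^T\bigl(Y_\Sigma,\Lm_{\sum n_iP_i}\bigr).
\]
By Proposition~\ref{prop:eqcoh}, this equals $\sigma(\sum n_iP_i)$. Hence $f=\sigma$ under $\Z^r\simeq\Pm_\Sigma$, and $\Ann(f)=\Ann(\sigma)$. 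The kernel argument from the proof of Theorem~\ref{thm:goralgrec} transfers unchanged to $\Z[M]$-valued pairings, because it only uses the identity $\ker\langle\cdot,\cdot\rangle_f=\Ann(f)$, which is proved pointwise as in Theorem~\ref{thm:diffalg}. This yields $K_0^T(Y_\Sigma)\simeq\Sh(\Pm_\Sigma)/\Ann(\sigma)$.
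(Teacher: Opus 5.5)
Your proposal is correct and is essentially the paper's argument: the paper obtains the theorem as an immediate corollary of Theorem~\ref{thm:dualitywithcoef}, using that $K_0^T(Y_\Sigma)$ is generated by the equivariant line bundles $\Lm_P$, $P\in\Pm_\Sigma\simeq\Pic^T(Y_\Sigma)$, that the equivariant Euler pairing is non-degenerate (which the paper simply asserts), and Proposition~\ref{prop:eqcoh} to identify the resulting function with $\sigma$. One small slip appears in your optional justification of Step~2: the $\Z[M]$-basis should consist of the structure sheaves of the orbit closures $X_\tau$ for the separatrix cones $\tau$, one for each maximal cone, and not of the $T$-fixed points $X_\sigma$, $\sigma\in\Sigma(n)$, whose classes do not span $K_0^T(Y_\Sigma)$ over $\Z[M]$ (already for $\mathbb{P}^1$ they cannot produce $[\Om_{\mathbb{P}^1}]$, as restriction to a fixed point shows).
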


As a corollary, we obtain a classical generators and relations presentation of equivariant $K$-theory of smooth complete toric variety.
\begin{corollary}
    Let $\Sigma$ be a smooth complete fan and let $Y_\Sigma$ be the corresponding toric variety. Then we have an isomorphism
\[
K_0^T(Y_\Sigma)\simeq \Q[T_1^{\pm 1},\ldots,T_r^{\pm 1}]/I,
\]
where $I$ is generated by monomials $D_{i_1}\cdots D_{i_t}$ such that $\rho_{i_1},\ldots, \rho_{i_t}\in \Sigma(1)$ are distinct and do not form a cone in $\Sigma$.
\end{corollary}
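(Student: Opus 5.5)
The plan is to combine Theorem~\ref{thm:toricKeq} with an equivariant version of the argument proving Theorem~\ref{thm:polalgrel}. Since $\Sigma$ is smooth, $\Pm_\Sigma\simeq\Z^r$ with basis $P_1,\dots,P_r$, so $\Sh(\Pm_\Sigma)\otimes\Q=\Q[T_1^{\pm1},\dots,T_r^{\pm1}]$. It therefore suffices to show that $\Ann(\sigma)=I$, where $\sigma(P)=\sum_{u\in P\cap M}e^u$.

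The relations $J$ are no longer expected to hold: $\sigma(P+u)=e^u\sigma(P)$ instead of $\sigma(P)$. I would turn this into structure. Let $R(T)=\Z[M]$ act on $\Q[T^{\pm1}]$ by letting $e^u$ act as $T^{(u)}:=\prod_i T_i^{\langle u,e_i\rangle}$. Then $T^{(u)}\cdot\sigma=e^u\sigma$, so the functional $\ell_\sigma$ of Theorem~\ref{thm:dualitywithcoef} is $R(T)$-linear. In this sense the former ideal $J$ is absorbed into the coefficient ring rather than being quotiented out.

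\emph{Step 1: $I\subset\Ann(\sigma)$.} I would prove the equivariant analogue of Lemma~\ref{lem:Dsig}:
\[
D_{i_1}\cdots D_{i_t}\cdot\sigma(P)=\sigma(F_\sigma).
\]
Here $D_i\cdot\sigma(P)=\sigma(P)-\sigma(P-P_i)$, and $P-P_i$ is $P$ with its $i$-th facet pushed inward by one lattice step. Since $e_i$ is primitive, the lattice points of $P$ not lying in $P-P_i$ are exactly those on the facet $F_i$. At the level of convex chains this reads $\I_P-\I_{P-P_i}=\I_{F_i}$ on $M$, and iterating gives the face $F_\sigma$. When the rays do not span a cone, the corresponding facets have empty intersection, so the result is $0$. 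This is the same convex-chain argument as in Lemma~\ref{lem:Dsig}, and it is linear in $\I_P$ because $\sigma$ is a valuation. Hence it holds for virtual $P$ and non-projective $\Sigma$ too: work with chains of the form $\sum\pm\I_Q$ directly rather than with honest polytopes.

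\emph{Step 2: generation over $R(T)$.} I would run Lemmas~\ref{lem:squarefree} and~\ref{lem:separatices_generate} with each use of a relation $T^{(u)}=1$ replaced by $T^{(u)}=e^u\in R(T)$. The computation expressing $D_1$ through other $D_i$ then acquires a unit coefficient $e^{u}$, and the induction on multiplicity and on the order of maximal cones goes through verbatim. The conclusion is that $\Q[T^{\pm1}]/I$ is generated as an $R(T)_\Q$-module by the separatrix monomials $D_\tau$, $\tau$ ranging over a set in bijection with $\Sigma(n)$.

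\emph{Step 3: independence.} Given an $R(T)$-linear combination $\sum a_kD_{\tau_k}$ lying in $\Ann(\sigma)$, pick the maximal $\tau_1$ in the order of Lemma~\ref{lem:separatices_generate}. As in Lemma~\ref{lem:separatices_independens}, multiply by $D_{\tau_1^-}$. The terms with $k\ge2$ die by Step~1 and the face structure. The remaining term is $a_1D_{\sigma_{\tau_1}}$, and by Step~1 it sends $\sigma$ to $a_1\,\sigma(\text{vertex})=a_1e^{v}$, a nonzero element of the domain $R(T)_\Q$ unless $a_1=0$. Hence the separatrix monomials are $R(T)$-linearly independent in $\Sh/\Ann(\sigma)$. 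Combined with Step~2, the surjection $\Q[T^{\pm1}]/I\to\Sh/\Ann(\sigma)\simeq K_0^T(Y_\Sigma)$ maps an $R(T)$-spanning set to an $R(T)$-free basis, so it is an isomorphism.

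I expect the main obstacle to be Step~2. Its nonequivariant proof freely sets $T^{(u)}=1$, and one must check that the inductive rewriting, especially the ``higher order terms'' coming from $D_i^k$ with $k>n$ (which no longer vanish outright without $J$), still terminates. The point to verify is that these powers can be reduced using $I$ plus the $R(T)$-action alone. If this is delicate, the fallback is to first localize at the augmentation ideal or pass to the completion, where the Morse argument works degree by degree, and then descend using freeness of $K_0^T(Y_\Sigma)$ over $R(T)$.
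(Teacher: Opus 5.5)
Steps~1 and~3, and your closing ``spanning set onto a free basis'' argument, are sound. Step~2, however, is a genuine gap: the nonequivariant lemmas do \emph{not} transfer verbatim once $J$ is dropped, for two reasons.

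First, termination in Lemmas~\ref{lem:squarefree} and~\ref{lem:separatices_generate} relies on $D$-monomials of degree $>n$ vanishing, and that comes from $J$, not from $I$. For $\mathbb{P}^1$ ($e_1=1$, $e_2=-1$) one has $D_1^k\notin I=(D_1D_2)$ for every $k$, since $1-T_1^{-1}$ and $1-T_2^{-1}$ are coprime.

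Second, replacing $T^{(u)}=1$ by $T^{(u)}=e^u$ does not just insert a unit coefficient; it creates a nonzero constant term. In the same example, $D_1=(1-e^{-u})+e^{-u}D_2$ holds exactly ($u=1\in M$). In the Morse step this gives $D_\tau=(1-e^{\mp\lambda})D_{\tau'}+\cdots$, where $\tau'$ is the facet of $\tau$ opposite $\rho_1$. That term has \emph{lower} dimension and the \emph{same} attracting cone, so the induction ``higher degree, or same degree and smaller cone'' no longer applies.

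Your fallback does not close the gap as stated. Nakayama at the augmentation ideal $\mathfrak a$, together with the nonequivariant Lemma~\ref{lem:separatices_generate} and your Step~3, gives $(\ker\phi)_{\mathfrak a}=0$ for $\phi\colon\Q[T^{\pm1}]/I\to\Sh(\Pm_\Sigma)/\Ann(\sigma)$. But freeness of the \emph{target} says nothing about a kernel supported away from $\mathfrak a$. What you need is torsion-freeness of the \emph{source} over $R(T)$, which does hold. $I$ is radical, being a localized squarefree monomial ideal in the $D_i$. Its components $\{T_i=1,\ i\notin\sigma\}$, $\sigma\in\Sigma(n)$, each map isomorphically onto $\operatorname{Spec}R(T)$ by smoothness. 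Hence $\Q[T^{\pm1}]/I\hookrightarrow R(T)_\Q^{|\Sigma(n)|}$, and a torsion-free $\ker\phi$ with $(\ker\phi)_{\mathfrak a}=0$ must vanish.

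Alternatively, you can repair Step~2 directly using only $I$. For a cone $\kappa$ and any ray $\rho_j$, one has $T_j^{\pm1}D_\kappa\in R(T)D_\kappa+\sum_{\kappa'\supsetneq\kappa}\Q[T^{\pm1}]D_{\kappa'}$. For $j\notin\kappa$ this follows from $T_j^{-1}=1-D_j$ and $T_j=1+T_jD_j$. For $j\in\kappa$, write $T_j=e^{\pm u}\prod_{i\notin\sigma}T_i^{c_i}$ with $\sigma\supseteq\kappa$ maximal and $u$ dual to $e_j$. Since cones have dimension at most $n$, the ideal generated by $D_\kappa$ is then the $R(T)$-span of $\{D_{\kappa'}:\kappa'\supseteq\kappa\}$.

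Now take $\tau$ with attracting cone $\sigma$ and $\rho_1\in\tau\setminus\tau^+$. You get $D_\tau\in R(T)D_{\tau'}+\operatorname{span}_{R(T)}\{D_\kappa:\sigma_\kappa<\sigma\}$, because every $\kappa$ produced contains $\tau^+$ but is not a face of $\sigma$. Induct on $\sigma$ in the total order and, for fixed $\sigma$, on $|\tau\setminus\tau^+|$.

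With either repair, your route is genuinely different from the paper's and more explicit. The paper only records $\Ann(\sigma)\subseteq\Ann(\Ehr)=I+J$ via Proposition~\ref{prop:basechange} and Corollary~\ref{cor:Krelations}, and leaves $\Ann(\sigma)=I$ to ``a direct computation''. Excluding elements of $(I+J)\setminus I$ from $\Ann(\sigma)$ there requires global information about $\Q[T^{\pm1}]/I$ over $R(T)$ of exactly this kind.
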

\begin{proof}
   Note that $\Ehr(P)= \phi\circ \sigma(P)$, where $\phi\colon\Z[M]\to \Z$ is the evaluation of a Laurent polynomial at $(1,\ldots,1)$. By Proposition~\ref{prop:basechange}, we get that $\Ann(\sigma)\subset \Ann(\Ehr)$ which is given by the sum of two ideals $I+J$ by Corollary~\ref{cor:Krelations}. A direct computation then shows that $\Ann(\sigma)=I$. 
\end{proof}

\subsection{Polyhedral model for $K$-theory of $G/B$}\label{ssec:polyKflags}

In this section we apply the theory of of discrete duality algebras to give a polyhedral description of ($T$-equivariant) $K$-theory of generalized flag varieties $G/B$ for a reductive group $G$ and a Borel subgroup $B\subset G$.  In further sections we will investigate the case $G= \GL(n)$ in more detail.

Let $G$ be a reductive algebraic group of rank $n$ and let $B\subset G$ be a Borel subgroup of $G$. In what follows, we will denote by $N=\dim(G/B)$ the dimension of generalized full flag variety associated to $G$. Let $T$ be the maximal torus corresponding to $B$, and $W=N(T)/T$ the Weyl group of $G$. Denote by $\Lambda=X^*(T)$ the weight lattice of $G$ (i.e., the character lattice of $T$), and let $\lambda\in\Lambda$ be a character. It corresponds to the line bundle 
\[
\calL_\lambda=G\times^{B}\C_{-\lambda}\to G/B,
\]
where $B$ acts on $\C_{-\lambda}$ via $-\lambda$, and the unipotent radical of $B$ acts trivially. The $T$-equivariant Euler characteristic $\chi_T(G/B,\calL_\lambda)$ is then an element of the representation ring $R(T)$ of $T$:
\[
\chi_T(G/B,\calL_\lambda)=\sum_{i\geq 0}(-1)^i[H^i(G/B,\calL_\lambda)]\in R(T)=\Z[\Lambda].
\]

The classical Borel--Weil--Bott theorem (\cite{Bott57}) states that
\begin{equation}\label{eq:BWB}
\chi_T(G/B,\calL_\lambda)=(-1)^w[V(w\cdot \lambda)],
\end{equation}
if there exists a unique  $w\in W$ such that $w\cdot\lambda=w(\lambda+\rho)-\rho$ is dominant, and zero otherwise. So the $T$-equivariant Euler characteristic of $\calL_\lambda$ can be computed using the \emph{Weyl character formula}:
\begin{equation}\label{eq:Weyl character}
\chi_T(G/B,\calL_\lambda)=\frac{\sum_{u\in W}(-1)^{\ell(u)}e^{u(\lambda+\rho)}}{\prod_{\alpha\in\Delta^+}(e^{\alpha/2}-e^{-\alpha/2})},
\end{equation}
where the product is taken over the system of positive roots $\Delta_+$,  and $\rho=\frac{1}{2}\sum_{\alpha\in\Delta^+}\alpha$ stands for the half-sum of positive roots.

We want to view this as a map
\[
F_G^T \colon \Lambda\to \Z[\Lambda], \qquad \lambda \mapsto \frac{\sum_{u\in W}(-1)^{\ell(u)}e^{u(\lambda+\rho)}}{\prod_{\alpha\in\Delta^+}(e^{\alpha/2}-e^{-\alpha/2})}.
\]
%\todo[inline]{Should we switch $e$ to $\bf t$ in the Weyl formula to get an element of $\Z[\Lambda]$?}

Forgetting the $T$-action (i.e. setting all $e^\alpha\to 1$), we recover the usual (non-equivariant) Euler characteristic of~$\calL_\lambda$ via the \emph{Weyl dimension formula}:
\[
\chi(G/B,\calL_\lambda)=(-1)^w\dim V(w\cdot \lambda)=\prod_{\alpha\in \Delta^+}\frac{( w(\lambda+\rho),\alpha) }{(\rho,\alpha)}.
\]

Thus, $\chi(G/B,\calL_\lambda)$ is a polynomial in $\lambda_1,\dots,\lambda_n$, sometimes referred to as the \emph{Weyl polynomial} and denoted by $F_G(\lambda)$. This is an inhomogeneous polynomial of degree $|\Delta^+|=\dim(G/B)$.

\begin{example}
If $G=\GL(n)$, the Weyl polynomial equals $F_{\GL(n)}(\lambda)=\displaystyle\prod_{i<j}\frac{\lambda_i-\lambda_j-i+j}{j-i}$.
\end{example}

The previous discussion immediately implies the following theorem about the $K$-group of the full flag variety $G/B$.

\begin{theorem}\label{thm:kflag-general}     Let $G/B$ be a generalized full flag variety, $X=X(T)$ the character lattice of maximal torus, and $F_G$ the Weyl polynomial of $G$. Then we have the following identification:
    \[
    K_0(G/B) = \Sh(\Lambda)/\Ann(F_G),
    \]
    \[
    K_0^T(G/B) = \Sh(\Lambda)/\Ann(F_G^T).
    \]
\end{theorem}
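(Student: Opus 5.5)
The plan is to follow the proof of Theorem~\ref{thm:toricK} verbatim, with $Y_\Sigma$ replaced by $G/B$, and to use Theorem~\ref{thm:goralgrec} together with its coefficient version Theorem~\ref{thm:dualitywithcoef}. Three inputs are needed. First, the pairing $\langle\Fm,\Gm\rangle=\chi(G/B,\Fm\otimes\Gm)$ should be a Frobenius duality on $K_0(G/B)$, and the analogous statement should hold equivariantly. Second, $K_0(G/B)$ (respectively $K_0^T(G/B)$ as an $R(T)$-algebra) should be generated by the classes of the line bundles $\calL_\lambda$, $\lambda\in\Lambda$. Third, we need to identify the function $\lambda\mapsto\chi(G/B,\calL_\lambda)$ with $F_G$, and $\lambda\mapsto\chi_T(G/B,\calL_\lambda)$ with $F^T_G$.

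For the nondegeneracy of the pairing I would use the Bruhat cell decomposition. The classes $[\Om_{X_w}]$ of the Schubert structure sheaves form a $\Z$-basis of $K_0(G/B)$, and the same is true for the opposite Schubert varieties. The pairing $\chi(\Om_{X_w}\otimes\Om_{X^v}(-\partial X^v))=\delta_{wv}$ is the standard duality statement, and it shows that the Euler pairing is unimodular, hence nondegenerate. Equivariantly the same Schubert bases are bases over $R(T)$, and the same identity holds in $R(T)$. This gives nondegeneracy in the sense of Subsection~\ref{ssec:dualitywithcoefficients}.

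For generation I would use that the natural map $R(T)\to K_0^T(G/B)$, $e^\lambda\mapsto[\calL_\lambda]$, is surjective after rationalization; this is the $K$-theoretic Borel presentation, in which $K_0(G/B)\otimes\Q\simeq R(T)\otimes_{R(G)}\Q$. Non-equivariantly, $K_0(G/B)\otimes\Q$ is then generated by the $[\calL_\lambda]$. The lattice $\Lambda$ is finitely generated, so finitely many invertible generators suffice. Choosing a basis $\lambda_1,\dots,\lambda_n$ of $\Lambda$ and applying Theorem~\ref{thm:goralgrec} with $a_i=[\calL_{\lambda_i}]$ yields $K_0(G/B)\simeq K_f$ with $f(n_1,\dots,n_n)=\chi(\calL_{\sum n_i\lambda_i})$, since $\calL_\lambda\otimes\calL_\mu=\calL_{\lambda+\mu}$. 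In other words, $f=F_G$ under the identification $\Sh(\Z^n)=\Sh(\Lambda)$.

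The identification of $f$ is exactly the Borel--Weil--Bott formula \eqref{eq:BWB}, in the form of the Weyl character formula \eqref{eq:Weyl character}, which holds for all $\lambda$ and not only dominant ones. The equivariant statement then follows by the same argument using Theorem~\ref{thm:dualitywithcoef} with $A=R(T)$. One caveat is that for the equivariant ring the generators of $K_0^T$ over $\Z$ also include the coefficients from $R(T)$. I would handle this by regarding $K_0^T$ as generated by the $\calL_\lambda$ as an $R(T)$-algebra and noting that the $R(T)$-valued pairing absorbs the coefficients. The main obstacle I expect is precisely this point, together with the generation statement in the non-equivariant case, which needs rational coefficients or the fact that $G$ is simply connected. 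I would state it with $\Q$-coefficients, as the paper does throughout.
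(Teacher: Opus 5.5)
Your proposal is correct and follows the same route as the paper's proof, with more detail and more care in the equivariant case, where working with $R(T)$-algebras is indeed needed. The route is: nondegeneracy of the Euler pairing from the Bruhat cell decomposition, generation by the classes $[\calL_\lambda]$, then Theorems~\ref{thm:goralgrec} and~\ref{thm:dualitywithcoef}, with the Borel--Weil--Bott formula identifying the function. One slip to fix: the map $R(T)\to K_0^T(G/B)$, $e^\lambda\mapsto[\calL_\lambda]$, is not surjective even rationally; what you mean, and what you actually use, is that $R(T)\to K_0(G/B)\otimes\Q$ is surjective, equivalently that $K_0^T(G/B)$ is generated by the $[\calL_\lambda]$ as an $R(T)$-algebra.
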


\begin{proof}
The proof is analogous to the proof of Theorem~\ref{thm:toricK}. Indeed, $G/B$ has an algebraic cell decomposition, and $K_0(G/B)$ is generated by $\Pic(G/B)$, that can be identified with $\Lambda$. Therefore, the statement follows from Theorems~\ref{thm:goralgrec} and~\ref{thm:dualitywithcoef} by applying Borel--Weil--Bott theorem.
\end{proof}

The Weyl polynomial has the following polyhedral interpretation. For a dominant weight $\lambda$, there exists a \emph{string polytope} $S(\lambda) \subseteq \R^N$ such that
\[
    |S(\lambda)\cap \Lambda| = \dim V(\lambda)\text{.}
\]
Moreover, there exists a linear projection $\pi\colon \R^N\to \Lambda\otimes \R$, such that for every dominant $\lambda$ the class of $V(\lambda)$ in the representation ring of $T$ can be computed as a projected integral point transform of $S(\lambda)$:
\begin{equation}\label{eq:module point transform}
[V(\lambda)] = \sum_{a\in S(\lambda)\cap \Z^N} {\bf t}^{\pi(a)} = \sigma_\pi(S(\lambda)).
\end{equation}

This equality follows from Littelmann's results on string polytopes and crystal bases: indeed, as stated in \cite[Proposition 1.5]{littelmanncones} the set of integral points in the string polytope $S(\lambda)\subset \R^N$ parametrizes the elements of the crystal basis $\Bm(\lambda)$, and the projected integer point transform corresponding to the map $\pi\colon \R^N\to\Lambda\otimes\R$ takes $S(\lambda)\subset \R^N$ to the character of $V(\lambda)$. For a reformulation of this statement in terms of Newton--Okounkov bodies, see~\cite{Kaveh15}.

The construction of string polytopes depends on the choice of a reduced expression $\underline w_0$ of the longest element $w_0$ in the Weyl group, so we fix some reduced expression $\underline w_0$ (see \cite{littelmanncones, BerZel}). In type $A$ string polytopes in particular recover classical Gelfand--Zetlin polytopes.
% There exists a convex cone $\Cm_{\underline w_0}\subseteq \X_\R\oplus \R^N$ called \emph{weighted string cone} such that 
% \[
%   S_\lambda = \pi^{-1}(\lambda)\cap \Cm_{\underline w_0} \text{,}
% \]
% where $\pi \colon \X_\R\oplus \R^N\to \X_\R$ is the natural projection 

As in the case of Gelfand--Zetlin polytopes, string polytopes are homogeneous in $\lambda$, i.e. $S(k\lambda)=kS(\lambda)$.
However, string polytopes are not necessarily additive, i.e. $S(\lambda+\mu)=S(\lambda) + S(\mu)$ is not true in general. Nevertheless, there exists a fan structure $\Fm$ on positive Weyl chamber, such that for every cone $\tau \in \Fm$ the string polytopes form a Minkowski-linear family of polytopes (see for instance \cite[Proposition 1.4]{kavvil}). Let us fix a cone of maximal dimension $\tau\in \Fm$ For any $\lambda \in \Lambda$, we will denote by $S(\lambda)$ the virtual polytope extending linear family of string polytopes on $\tau$.

\begin{proposition}\label{prop:virtual equality}
    For any $\lambda\in \Lambda$ we have equalities
    \[
    \chi_T(G/B,\Lm_\lambda) = \sigma_\pi(S(\lambda)), \qquad   \chi(G/B,\Lm_\lambda) = \Ehr(S(\lambda)).
    \]
\end{proposition}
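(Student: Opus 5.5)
The plan is to prove the equivariant identity first; the non-equivariant one then follows by applying the augmentation $\phi\colon\Z[\Lambda]\to\Z$, $e^\mu\mapsto 1$. Indeed, $\phi(\chi_T(G/B,\Lm_\lambda))=\chi(G/B,\Lm_\lambda)$, and $\phi\circ\sigma_\pi=\Ehr$ on virtual polytopes (Example~\ref{ex:to-origin} composed with $\pi$). The same holds for the virtual extension, since both sides are valuations extended by linearity through convex chains.

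\textbf{Step 1: the identity on the cone.} For $\lambda\in\tau\cap\Lambda$ the weight is dominant. By \eqref{eq:BWB} with $w=e$ we get $\chi_T(G/B,\Lm_\lambda)=[V(\lambda)]$. By \eqref{eq:module point transform} this equals $\sigma_\pi(S(\lambda))$, and here $S(\lambda)$ is the honest string polytope.

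\textbf{Step 2: both sides share an analytic form on all of $\Lambda$.}

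For the left side, the Weyl character formula \eqref{eq:Weyl character} writes $\chi_T(G/B,\Lm_\lambda)$ for every $\lambda\in\Lambda$ as a fixed rational function. Concretely, it is a finite sum $\sum_{u\in W}c_u(\mathbf t)\,\mathbf t^{u(\lambda)}$, where the coefficients $c_u$ are rational functions independent of $\lambda$ and the exponents are linear in $\lambda$.

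For the right side, I would use Brion's theorem (Theorem~\ref{thm:brion}) in the form valid for virtual polytopes in a linear family. Fix a generic simple or simplicial refinement of the normal fan common to all $S(\lambda)$, $\lambda\in\tau$. On the cone $\tau$ the combinatorial type is constant, vertices $v_j(\lambda)$ depend linearly on $\lambda$, and tangent cones $K_j$ are fixed up to translation. Brion then gives
\[
\sigma(S(\lambda))=\sum_j \mathbf t^{v_j(\lambda)}\sigma(K_j).
\]
Applying $\widehat\pi$, and noting that $\pi(v_j(\lambda))$ is linear in $\lambda$, yields $\sigma_\pi(S(\lambda))=\sum_j g_j(\mathbf y)\,\mathbf y^{\pi(v_j(\lambda))}$. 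The $g_j$ are rational functions, and one must check that $\widehat\pi$ of each cone's series is defined as a rational function, perhaps after a generic perturbation of $\pi$ or by working with $\widehat\pi$ of the rational expression.

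The key claim is that this formula persists for the virtual extension $S(\lambda)$, $\lambda\in\Lambda$. Since $\lambda\mapsto S(\lambda)$ is a homomorphism into $\Pm$ and $\sigma_\pi$ extends as a valuation through convex chains, both sides of Brion's formula are ``exponential-polynomial'' in $\lambda$. An expression of the form $\sum_j g_j\mathbf y^{L_j(\lambda)}$ with $L_j$ linear agrees with the valuation on the full-dimensional cone. It is in fact multiplicative along the family: $\I_{S(\lambda+\mu)}=\I_{S(\lambda)}*\I_{S(\mu)}$, and Brion's formula is compatible with this multiplication via the Lawrence--Varchenko/Brion formalism. Hence it agrees on the whole group generated by $\tau\cap\Lambda$, which is $\Lambda$ because $\tau$ is full-dimensional.

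\textbf{Step 3: uniqueness of exponential-polynomial extension.} Two functions $\Lambda\to\Q(\mathbf y)$ of the form $\sum_k c_k\,\mathbf y^{L_k(\lambda)}$, with finitely many linear maps $L_k$, that agree on all lattice points of a full-dimensional cone agree everywhere. To see this, collect terms by distinct $L_k$. The characters $\lambda\mapsto\mathbf y^{L_k(\lambda)}$ are then distinct, and by Artin/Dedekind independence restricted to a translate of a full-rank semigroup they are linearly independent. So the coefficients coincide. Applying this to the two expressions from Step 2, which agree on $\tau\cap\Lambda$ by Step 1, gives the equivariant equality for all $\lambda$, and augmentation gives the Ehrhart one.

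\textbf{Main obstacle.} I expect the hardest part to be Step 2: justifying that Brion's vertex formula, with the projection $\widehat\pi$ applied, holds for the \emph{virtual} extension $S(\lambda)$. A cleaner fallback is to avoid Brion on virtual polytopes altogether. Decompose $\lambda=\lambda_1-\lambda_2$ with $\lambda_i\in\tau$, and note that $\lambda\mapsto\sigma_\pi(S(\lambda))$ is the restriction of a ``polynomial-exponential'' valuation in the sense of \cite[Sec.~8]{PK92}. Such a valuation is determined on the group by its values on the semigroup, so Step 3 applies directly.
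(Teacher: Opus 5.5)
Your proposal follows essentially the same route as the paper. The Weyl character formula and Brion's theorem give expressions with fixed coefficients and exponents linear in $\lambda$. These agree on the full-dimensional cone $\tau$, and agreement then propagates to all of $\Lambda$: the paper cross-multiplies and notes that monomial cancellation can only occur along finitely many affine subspaces, which is the same point as your Dedekind-independence argument.

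The step you single out, validity of the Brion expression for the virtual $S(\lambda)$, is simply asserted in the paper. Your multiplicativity heuristic does not establish it, since $\sigma$ is not multiplicative under Minkowski sum (e.g.\ $\sigma([0,1]+[0,1])\neq\sigma([0,1])^2$). Your Pukhlikov--Khovanskii fallback does close it: the degree-$k$ parts of $\sigma_\pi(P)|_{\mathbf y=e^{\eta}}$ are polynomial valuations of $P$, hence polynomial in $\lambda$ on all of $\Lambda$, and so are the degree-$k$ parts of the Weyl character formula. Zariski density of $\tau\cap\Lambda$ then finishes the argument, and Brion is not even needed.
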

\begin{proof}
    First of all, it is enough to check the first equality since the second one is obtained from the former by substituting $t=1$. Further, the first equality is true for any $\lambda \in \tau$ by Equations~\eqref{eq:BWB} and~\eqref{eq:module point transform}, so we need to prove that the equality extends to the whole $\Lambda$.

    By the Weyl character formula \eqref{eq:Weyl character} the left hand side of the equality can be written as a sum of rational functions 
    \[
     \chi_T(G/B,\Lm_\lambda) = \sum_i \frac{A_i(\lambda)}{B_i},
    \]
    where $B_i\in \Z[\Lambda]$ are fixed Laurent polynomials and $A_i(\lambda)$ are Laurent polynomials of the form:
    \[
   A_i(\lambda) = \sum_{M} a_{i,M}{\bf t}^{M\lambda},
    \]
    where $M\in \mathrm{Mat}_{n\times n}(\Z)$ is an integer matrix. In other words, the coefficients of $A_i(\lambda)$ are fixed, and the degrees depend linearly on $\lambda$.

    On the other hand, by Brion's theorem (Theorem~\ref{thm:brion}) we get a similar expression for the integer point transform of $S(\lambda)$:
    \[
    \sigma(S(\lambda)) = \sum_j \frac{C_j(\lambda)}{D_j},
    \]
    where the denominators $D_j$ are fixed Laurent polynomials and the degrees of monomials in $C_j(\lambda)$ depend linearly on $\lambda$.
    Both expressions are valid for any $\lambda\in \Lambda$. Moreover, since the projected integer point transform $\sigma_\pi(S_\lambda)$ is obtained by substituting ${\bf t}^u$ by ${\bf t
    }^{\pi(u)}$, it is enough to show that for every $\lambda\in \Lambda$ the function 
    \begin{equation}\label{eq:afuny Laurent}
    \prod D_j^{\pi} \cdot \sum_{i} \left(A_i(\lambda) \cdot\prod_{k\ne i} B_k \right) - \prod B_i \cdot \sum \left( C_j(\lambda)^\pi \cdot \prod_{\ell\ne j} D_\ell^\pi \right)
    \end{equation}
    is identically~$0$. Here by $C_j(\lambda)^\pi$ and $D_j^\pi$ we denote the result of substitution induced by $\pi$ of Laurent polynomials $C_j(\lambda)$ and $D_j$.

    The function defined in \eqref{eq:afuny Laurent} is a Laurent polynomial with fixed coefficients and degrees depending affinely on $\lambda$. Moreover, we know that for a full dimensional  cone $\tau\subset \Lambda$  it is identically equal to zero. Since the cancellation of monomials can only happen along finitely many affine subspaces of $\Lambda$ and they cannot cover the whole cone $\tau$, we obtain that the function defined in \eqref{eq:afuny Laurent} is identically equal to~0 for any $\lambda \in \Lambda$.
\end{proof}

As an immediate corollary, we obtain the following theorem.

\begin{theorem}\label{thm:kflag-string}
    Let $G/B$ be a (generalized) full flag variety, $\Lambda=X(T)$ the character lattice of maximal torus and $S(\lambda)$ (possibly virtual) string polytope corresponding to $\lambda\in \Lambda$. Then we have the following identifications:
    \[
    K_0(G/B) = \Sh(\Lambda)/\Ann(\Ehr(S(\lambda))), \qquad     K_0^T(G/B) = \Sh(\Lambda)/\Ann(\sigma_\pi(S(\lambda))).
    \]  
\end{theorem}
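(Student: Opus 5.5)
The plan is to combine Theorem~\ref{thm:kflag-general} with Proposition~\ref{prop:virtual equality}. The point is that the annihilator ideal depends only on the function being annihilated: if $f=g$ as functions $\Lambda\to A$, then $\Ann(f)=\Ann(g)$ inside $\Sh(\Lambda)$, and so $\Sh(\Lambda)/\Ann(f)=\Sh(\Lambda)/\Ann(g)$ as rings.

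First, I will recall the setup. Fix a maximal cone $\tau$ of the fan $\Fm$ on the dominant chamber. On $\tau$ the string polytopes form a Minkowski-linear family. Since $\tau$ is full-dimensional, the lattice points of $\tau$ generate $\Lambda$ as a group, so this family extends uniquely to a homomorphism $\Lambda\to\Pm$, $\lambda\mapsto S(\lambda)$. The functions $\lambda\mapsto\Ehr(S(\lambda))$ and $\lambda\mapsto\sigma_\pi(S(\lambda))$ are then well defined on all of $\Lambda$. The first is a polynomial by Proposition~\ref{prop:ehrhart-extension}. The second makes sense because the integer point transform extends to virtual polytopes, being a valuation applied to convex chains, and the projection $\widehat\pi$ is a ring map.

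Second, I will apply Proposition~\ref{prop:virtual equality}. It gives $\chi(G/B,\Lm_\lambda)=\Ehr(S(\lambda))$ and $\chi_T(G/B,\Lm_\lambda)=\sigma_\pi(S(\lambda))$ for every $\lambda\in\Lambda$, not only for dominant $\lambda$ or $\lambda\in\tau$. By Borel--Weil--Bott, the left-hand sides are exactly $F_G(\lambda)$ and $F_G^T(\lambda)$. Hence, as functions on $\Lambda$,
\[
F_G=\Ehr(S(\cdot)),\qquad F_G^T=\sigma_\pi(S(\cdot)),
\]
and therefore $\Ann(F_G)=\Ann(\Ehr(S(\cdot)))$ and $\Ann(F_G^T)=\Ann(\sigma_\pi(S(\cdot)))$ in $\Sh(\Lambda)$.

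Third, I will substitute these equalities into Theorem~\ref{thm:kflag-general}, which gives the two claimed identifications. The duality functional $\ell_f$ is evaluation at $0$ of the same function in both presentations, so the Frobenius duality also matches the Euler characteristic pairing.

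The only substantive input is the extension of the identity beyond the cone $\tau$, and that is already supplied by Proposition~\ref{prop:virtual equality}. The one remaining point needing care is checking that the virtual extension $S(\lambda)$ is independent of auxiliary choices, which follows from the uniqueness of the homomorphic extension from a full-dimensional cone. Everything else is formal.
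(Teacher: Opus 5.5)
Your proposal is correct and is essentially the paper's argument: the paper obtains this theorem as an immediate corollary of Proposition~\ref{prop:virtual equality} combined with Theorem~\ref{thm:kflag-general}, exactly by identifying the functions $F_G$ and $F_G^T$ with $\Ehr(S(\lambda))$ and $\sigma_\pi(S(\lambda))$ on all of $\Lambda$. One small correction to your closing remark: the extension is unique only once the cone $\tau$ and the reduced expression of $w_0$ are fixed, and the resulting virtual family can depend on these choices (the paper remarks on this), but independence is not needed, because for every such choice the two functions coincide with $F_G$ and $F_G^T$.
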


\begin{remark}
    For $G=\GL(n)$ the statement of Proposition~\ref{prop:virtual equality} follows from the main result of \cite{Makhlin16}, where the author proves directly that Brion's theorem recovers the Weyl character formula by showing that the contributions of all non-simple vertices of Gelfand--Zetlin polytope to the projected integer point transform vanish. It would be interesting to see the generalization of results of~\cite{Makhlin16} to other types.
\end{remark}

\begin{remark}
    Note that the linear family of string polytopes~$S(\lambda)$ depends on the choice of a reduced expression $\underline w_0$ of the longest element $w_0\in W$ in the Weyl group, as well as the choice of maximal cone $\tau\in \Fm$. Thus different choices possibly provide different polyhedral models for $K_0(G/B)$ and $K_0^T(G/B)$. It would be interesting to investigate these different presentations as well as isomorphisms between them.
\end{remark}

\section{Combinatorics of Gelfand--Zetlin polytopes}\label{sec:gz}
In the rest of the paper
we will apply results of Subsection~\ref{ssec:polyKflags} to  polyhedral representatives for structure sheaves of Schubert classes in $\GL(n)/B$. First we recall the definition and some combinatorial properties of Gelfand--Zetlin polytopes.

\subsection{Gelfand--Zetlin polytopes}\label{ssec:gzpoly}

Take a strictly decreasing sequence of integers $\lambda=(\lambda_1>\lambda_2>\dots>\lambda_n)$. Consider a triangular tableau of the following form (it is called  \emph{Gelfand--Zetlin tableau}):
\begin{equation}\label{eq:GZ}
\begin{array}{ccccccccc}
 \lambda_n && \lambda_{n-1} &&\lambda_{n-2} && \dots && \lambda_1\\
 & x_{1{1}} && x_{1{2}} && \dots && x_{1,{n-1}}\\
 && x_{21}  && \dots && x_{2,{n-2}}\\
&&& \ddots & \vdots\\
&&&&x_{n-1,1}
\end{array}
\end{equation}
We will interpret $x_{ij}$, where $i+j\leq n$, as coordinates in $\R^N$, where $N=\frac{n(n-1)}2$.  This tableau can be viewed as a set of inequalities on the coordinates in the following way: for each triangle $\begin{array}{ccc} a && b\\& c\end{array}$ in this tableau, impose the inequalities $a\leq c\leq b$. This system of inequalities defines a bounded nondegenerate polytope in $\R^N$. This polytope is called a \emph{Gelfand--Zetlin polytope}; we will denote it by $GZ(\lambda)$.

Gelfand--Zetlin polytopes were introduced by I.~M.~Gelfand and M.~L.~Zetlin\footnote{Sometimes also spelled Cetlin or Tsetlin.} in~\cite{GelfandZetlin50}. The integer points in $GZ(\lambda)$ index a special basis, called the Gelfand--Zetlin basis, in the irreducible representation $V(\lambda)$ with the highest weight $\lambda$ of the group $\GL(n)$. In particular, the number of integer points in $GZ(\lambda)$ is equal to $\dim V(\lambda)$. 

 \begin{example}\label{ex:GZ3} For $n=3$, the polytope $GZ(\lambda)$ and the corresponding Gelfand--Zetlin tableau and the polytope is shown in Figure~\ref{fig:GZpolytope}.
 \begin{figure}[h!]
 \begin{center}
 $\begin{array}{ccccc}
   \lambda_3 && \lambda_2 &&\lambda_1\\
   & x && y\\
   && z
  \end{array}$ \qquad  %\includegraphics[scale=0.7]{gz-1}
 \begin{tikzpicture}[baseline={(0,2)},scale=0.6]
    \draw (0,0)--(3,1)--(7,5)--(4,4)--(0,4)--(3,8)--(7,8)--(4,4)--(0,0)--(0,4);
    \draw (7,5)--(7,8);
    \draw[dashed] (3,1)--(3,8);
    \node at (1.5,2.5) {$F_{12}$};
    \node at (4,3) {$F_{21}$};
    \node at (6,5.5) {$F^{11}$};
    \node at (4,6) {$F^{21}$};
    \node[below right] at (5.5,2.5) {$F^{12}$};
    \node[above left] at (1,6.5) {$F_{11}$};
    \draw (5.5,2.5)--(4.9,3.1);
    \draw (1.6,5.9)--(1,6.5);
\end{tikzpicture}
\end{center}
 \caption{Gelfand--Zetlin polytope in dimension 3}\label{fig:GZpolytope}
 \end{figure}
\end{example}

The following proposition is immediate.

\begin{proposition}\label{prop:ehrhart-gz} For a given $n$, all Gelfand--Zetlin polytopes have the same normal fan. The Ehrhart polynomial of $GZ(\lambda)$ is equal to the Weyl polynomial of $\GL(n)$:
\[
 \Ehr(GZ(\lambda))=F_{\GL(n)}=\prod_{i<j}\frac{\lambda_i-\lambda_j-i+j}{j-i}.
\]
\end{proposition}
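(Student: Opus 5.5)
The proposition has two parts, and I would prove them separately: first that the normal fan of $GZ(\lambda)$ does not depend on the strictly decreasing $\lambda$, and then that the lattice-point count is the Weyl polynomial.

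\emph{Normal fan.} The polytope is cut out by the fixed family of inequalities $a\le c\le b$ read off the triangles of the tableau~\eqref{eq:GZ}. Only the right-hand sides involving the top row depend on $\lambda$, and they do so linearly. So the normal fan is constant as long as the combinatorial type of the polytope is constant, i.e.\ as long as the same subsets of inequalities become equalities on the same faces.

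I would show that for strictly decreasing $\lambda$ the face lattice of $GZ(\lambda)$ is given by the standard combinatorics of Gelfand--Zetlin patterns: a face is determined by which adjacent entries of the tableau are forced to be equal. Then I would check that this set of equalities is consistent, and that it cuts out a face of the expected dimension. Both facts depend only on the pattern of equalities and not on the actual values of the $\lambda_i$, which is where strictness is used: no two top-row entries coincide.

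Equivalently, I would note that $GZ(\lambda)=\sum_{k=1}^{n-1}(\lambda_k-\lambda_{k+1})\,GZ(\omega_k)$ up to translation, as a Minkowski sum of fundamental Gelfand--Zetlin polytopes. Every coefficient is positive when $\lambda$ is strictly decreasing. The normal fan of a Minkowski sum with positive coefficients is the common refinement of the normal fans of the summands, so it does not depend on the coefficients. Additivity on the open dominant chamber I would quote from the linearity of string polytopes on cones of the fan $\Fm$ (\cite[Proposition~1.4]{kavvil}); in type $A$ the whole dominant chamber is a single cone of $\Fm$. I expect this additivity claim to be the main point needing care. To verify it directly, I would show that every integral GZ pattern for $\lambda+\mu$ splits as a sum of patterns for $\lambda$ and $\mu$, by a greedy interlacing argument done row by row.

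\emph{Ehrhart polynomial.} For dominant integral $\lambda$, the integer points of $GZ(\lambda)$ index the Gelfand--Zetlin basis of $V(\lambda)$. Hence $|GZ(\lambda)\cap\Z^N|=\dim V(\lambda)$, and by the Weyl dimension formula this equals $\prod_{i<j}(\lambda_i-\lambda_j-i+j)/(j-i)$. Both sides are polynomials on the full-dimensional cone of strictly decreasing integral $\lambda$: the left side is the Ehrhart polynomial of a linear family, by Proposition~\ref{prop:ehrhart-extension}. They agree on that cone, so they agree identically. This is also the type $A$ instance of Proposition~\ref{prop:virtual equality}.
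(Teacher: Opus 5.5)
Your proposal is correct and takes essentially the same approach as the paper, which calls the proposition immediate. The lattice points of $GZ(\lambda)$ index the Gelfand--Zetlin basis, so their number is $\dim V(\lambda)$, which the Weyl dimension formula evaluates; polynomiality on the full-dimensional dominant cone then extends the identity to all $\lambda$.

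For the normal-fan claim, which the paper does not argue at all, your decomposition $GZ(\lambda)=\sum_k(\lambda_k-\lambda_{k+1})\,GZ(\omega_k)$ (up to translation) is a good way to fill the gap. The additivity check is even simpler than a greedy argument. Each entry of a new row need only lie in the interval between two adjacent entries of the row above, and for $\lambda+\mu$ that interval is the Minkowski sum of the corresponding intervals for $\lambda$ and $\mu$. This works over $\R$ as well as over $\Z$, and the real version is what the normal-fan statement actually needs.
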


We denote the lattice of (possibly virtual) integer Gelfand--Zetlin polytopes by $\Pm_{GZ}$. Theorem~\ref{thm:kflag-general} together with Proposition~\ref{prop:ehrhart-gz} implies the following corollary.

\begin{corollary}\label{cor:ktheory-gl}
Let $\Fl(n)=\GL(n)/B$ be the variety of complete flags in $\C^n$. Its $K$-group is isomorphic to
$K_0(\Fl(n))\cong  K_{GZ}=\Sh(\Pm_{GZ})/\Ann\Ehr(GZ(\lambda))$.
\end{corollary}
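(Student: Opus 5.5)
The argument will combine Theorem~\ref{thm:kflag-general} for $G=\GL(n)$ with Proposition~\ref{prop:ehrhart-gz}. The point is to identify the lattice $\Pm_{GZ}$, together with the function $\Ehr(GZ(\lambda))$ on it, with the weight lattice $\Lambda=\Z^n$ carrying the Weyl polynomial $F_{\GL(n)}$. Once this is done, the two discrete PK-rings coincide by definition, and Theorem~\ref{thm:kflag-general} gives $K_0(\Fl(n))\cong\Sh(\Lambda)/\Ann(F_{\GL(n)})$.

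The first step is to check that $\lambda\mapsto GZ(\lambda)$ is a linear family on the cone of strictly dominant weights. By Proposition~\ref{prop:ehrhart-gz}, all $GZ(\lambda)$ with $\lambda_1>\dots>\lambda_n$ have the same normal fan. Moreover, $GZ(\lambda)$ is cut out by inequalities whose right-hand sides depend linearly on $\lambda$, namely the entries $\lambda_i$ in the top row of the tableau. Hence the support function $H_{GZ(\lambda)}$ is linear in $\lambda$ on this cone, so $GZ(\lambda+\mu)=GZ(\lambda)+GZ(\mu)$. This family extends uniquely to a homomorphism $\Lambda\to\Pm$ whose image is $\Pm_{GZ}$. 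This is exactly the setting of Proposition~\ref{prop:virtual equality} in type $A$, where string polytopes are Gelfand--Zetlin polytopes and the fan $\Fm$ consists of a single maximal cone.

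Next I show that this map $\Lambda\to\Pm_{GZ}$ is an isomorphism of lattices, that is, that it is injective. A virtual polytope $GZ(\lambda)$ determines $\lambda$: the support function evaluated on the normals of the facets $x_{1j}\ge\lambda_{\cdot}$ and $x_{1j}\le\lambda_{\cdot}$ (the first-row inequalities) recovers each $\lambda_i$ linearly. Hence $\Sh(\Pm_{GZ})\cong\Sh(\Lambda)$.

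Under this identification, the function $\Ehr(GZ(\lambda))$ equals $F_{\GL(n)}(\lambda)$ for all $\lambda\in\Lambda$. On the dominant cone this is Proposition~\ref{prop:ehrhart-gz}. Both sides are polynomials on $\Lambda$: the left side by Proposition~\ref{prop:ehrhart-extension}, the right side by the explicit product formula. Two polynomials agreeing on a full-dimensional cone agree everywhere, so the equality holds on all of $\Lambda$. Alternatively, one can cite the second equality of Proposition~\ref{prop:virtual equality} directly. The annihilators therefore coincide, and $K_{GZ}=\Sh(\Pm_{GZ})/\Ann\Ehr(GZ(\lambda))\cong\Sh(\Lambda)/\Ann(F_{\GL(n)})\cong K_0(\Fl(n))$.

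The only step needing care is the identification $\Pm_{GZ}\cong\Lambda$. One must be sure the lattice generated by integral GZ polytopes is exactly the image of $\Lambda$ and contains nothing more, for example no translates beyond those coming from $\lambda\mapsto\lambda+c(1,\dots,1)$. That shift corresponds to translating $GZ(\lambda)$ by $c(1,\dots,1)$, which is still given by a weight, so it is harmless. If $\Pm_{GZ}$ were defined more broadly, as all virtual polytopes with the GZ normal fan, I would instead invoke translation invariance, in the spirit of the ideal $J$ in Theorem~\ref{thm:polalgrel}. The extra generators would then be identified with $1$ modulo $\Ann$, which leaves the quotient unchanged.
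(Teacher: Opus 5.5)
Your proposal is correct and follows the paper's route: the paper gets the corollary in one line by combining Theorem~\ref{thm:kflag-general} for $G=\GL(n)$ with Proposition~\ref{prop:ehrhart-gz}. Your extra steps make explicit what the paper leaves implicit, namely Minkowski linearity of $\lambda\mapsto GZ(\lambda)$, the identification $\Pm_{GZ}\cong\Lambda$, and the extension of $\Ehr(GZ(\lambda))=F_{\GL(n)}(\lambda)$ from the dominant cone to all of $\Lambda$ by polynomiality.
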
 

%\todo[inline]{Picture, check numeration, more details on projection onto weight polytope}

\subsection{Projection of Gelfand--Zetlin polytope onto weight polytope}\label{ssec:GZprojection}

Consider the map $\pr$ taking each row of the Gelfand--Zetlin pattern, starting from the row of $\lambda_i$, into the sum of its entries minus the sum of the entries in the next row:
\[
\pr\colon \R^N\to\R^{n}\cong\Lambda\otimes \R,
\]
\[
(x_{11},x_{12},\dots,x_{n-1,1})\mapsto \left(\sum_{i=1}^n\lambda_i-\sum_{i=1}^{n-1} x_{1i},\sum_{i=1}^{n-1} x_{1i}-\sum_{i=1}^{n-2} x_{2i},\dots,(x_{n-2,1}+x_{n-2,2})-x_{n-1,1},x_{n-1,1}\right).
\]
This map projects  $GZ(\lambda)$ onto the weight polytope of $V(\lambda)$, considered as an $(n-1)$-dimensional polytope in the affine hyperspace in $\R^n$ defined by the equation $y_1+\dots+y_n=|\lambda|$. Here we denote the standard coordinates in~$\R^n$ by $y_1,\dots,y_n$.

\begin{example} In Example~\ref{ex:GZ3}, the image of $GZ(\lambda)$ under the projection 
\[
\pr\colon (x,y,z)\mapsto (\lambda_1+\lambda_2+\lambda_3-x-y,x+y-z, z)
\]
is a hexagon in the plane $x+y+z=\lambda_1+\lambda_2+\lambda_3$. This hexagon is the weight polytope of the representation~$V(\lambda)$ of~$\GL(3)$.
\end{example}

In the case of Gelfand--Zetlin polytope the formula (\ref{eq:module point transform}) on p.~\pageref{eq:module point transform} takes the following form. It also readily follows from the original construction of Gelfand--Zetlin basis, cf.~\cite{GelfandZetlin50}.

\begin{proposition} Let $\lambda\in\Lambda_+$. The projected integer point transform of $GZ(\lambda)$ with respect to the projection $\pr\colon\R^N\to \R^{n}$ equals the character of $V(\lambda)$:
\[
\sigma_\pr(GZ(\lambda))(y_1,\dots,y_n)= \frac{\det(y_i^{\lambda_j+n-j})_{1\leq i<j\leq n}}{\det(y_i^{n-j})_{1\leq i<j\leq n}}=\frac{\det(y_i^{\lambda_j+n-j})_{1\leq i<j\leq n}}{\prod_{1\leq i<j\leq n}(y_i-y_j)}.
\]
\end{proposition}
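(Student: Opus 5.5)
We prove the statement by induction on $n$, using the row-by-row structure of the Gelfand--Zetlin pattern together with the classical branching rule. The quantity on the right is the Weyl character formula for $\GL(n)$, i.e.\ the Schur polynomial $s_\lambda(y_1,\dots,y_n)$ written as a ratio of alternants.

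First, fix the first row $\mu=(x_{11},\dots,x_{1,n-1})$. The remaining rows of the tableau form exactly a Gelfand--Zetlin pattern $GZ(\mu)$ for $\GL(n-1)$, with top row $\mu$. Under $\pr$, the first coordinate is $|\lambda|-|\mu|$, and the last $n-1$ coordinates are precisely the projection $\pr_{n-1}$ of that smaller pattern. Here it is convenient to reindex so that the variable $y_1$ is split off.

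Hence the projected integer point transform factorizes over integer first rows $\mu$ interlacing $\lambda$ (that is, $\lambda_{j+1}\le\mu_j\le\lambda_j$ in the appropriate ordering):
\[
\sigma_\pr(GZ(\lambda))(y_1,\dots,y_n)=\sum_{\mu\prec\lambda} y_1^{|\lambda|-|\mu|}\,\sigma_{\pr}(GZ(\mu))(y_2,\dots,y_n).
\]
By induction, each inner term is the Schur polynomial $s_\mu(y_2,\dots,y_n)$. The base case $n=1$ is trivial: the polytope is a point and both sides equal $y_1^{\lambda_1}$.

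It then remains to prove the branching identity
\[
s_\lambda(y_1,\dots,y_n)=\sum_{\mu\prec\lambda}y_1^{|\lambda|-|\mu|}s_\mu(y_2,\dots,y_n)
\]
for the alternant ratios. I would do this directly with determinants. Multiply both sides by the Vandermonde $\prod_{i<j}(y_i-y_j)=\prod_{j>1}(y_1-y_j)\cdot\prod_{2\le i<j}(y_i-y_j)$. Then in the determinant $\det(y_i^{\lambda_j+n-j})$, subtract $y_1$ times the next column from each column, successively, to isolate the $y_1$-dependence. Expanding along the first row and summing the resulting geometric series in $y_1$ over each interlacing interval $[\lambda_{j+1},\lambda_j]$ yields the sum over $\mu$.

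Alternatively, and more cleanly, one can avoid the determinant manipulation. Each $\sigma_\pr(GZ(\lambda))$ counts semistandard tableaux of shape $\lambda$ by weight: the interlacing sequences are exactly the GT patterns encoding SSYT. So $\sigma_\pr(GZ(\lambda))$ is the combinatorial Schur polynomial, and we can quote the bialternant formula, i.e.\ Jacobi--Trudi/Lindström--Gessel--Viennot, as classical.

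Given the paper's remark that the statement ``readily follows from the original construction of the Gelfand--Zetlin basis,'' I would present it the second way. The integer points of $GZ(\lambda)$ index the Gelfand--Zetlin basis. Each basis vector is a $T$-weight vector whose weight is exactly $\pr$ of its pattern: the difference of consecutive row sums is the eigenvalue of $E_{kk}$, by the branching $\GL(k)\downarrow\GL(k-1)$. Therefore $\sigma_\pr(GZ(\lambda))=\operatorname{ch}V(\lambda)$, and the Weyl character formula (\ref{eq:Weyl character}) specialized to $\GL(n)$ gives the bialternant.

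The main obstacle is bookkeeping rather than ideas. One must match the ordering conventions: the tableau rows run with $\lambda_n$ on the left, and $\pr$ assigns row differences in a particular order. We need to check that the weight assigned by $\pr$ is $(|\lambda|-|\mu|,\dots)$ in the order corresponding to $y_1,\dots,y_n$, and that the sign/exponent conventions $y_i^{\lambda_j+n-j}$ agree with $\rho$-shifting in (\ref{eq:Weyl character}). Since the character is $S_n$-symmetric, any consistent reordering of the variables is harmless, which resolves this.
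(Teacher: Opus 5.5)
Your proposal is correct, and the route you say you would present (integer points of $GZ(\lambda)$ index the Gelfand--Zetlin basis, $\pr$ records their $T$-weights via the $\GL(k)\downarrow\GL(k-1)$ branching, and the Weyl character formula for $\GL(n)$ gives the bialternant) is exactly the justification the paper gives, namely the Gelfand--Zetlin construction, equivalently the type~$A$ case of \eqref{eq:module point transform}. Your first, inductive route is also a valid self-contained alternative, provided you make the column operation precise: subtract $y_1^{\,l_j-l_{j+1}}$ times column $j+1$ from column $j$, where $l_j=\lambda_j+n-j$, rather than just $y_1$ times it; this kills the $y_1$-row, pulls out exactly $\prod_{j>1}(y_1-y_j)$, and produces the geometric sums over the interlacing intervals with total $y_1$-exponent $|\lambda|-|\mu|$.
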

 
\subsection{Faces of Gelfand--Zetlin polytopes}\label{ssec:facesGZ}

 Let us describe the set of faces of the Gelfand--Zetlin polytope. The polytope is defined by a set of inequalities, represented by the Gelfand--Zetlin pattern~(\ref{eq:GZ}). Each face is obtained by turning some of these inequalities into equalities. 
 
 In particular, each facet (i.e. face of codimension 1) is defined by a unique equation:   $x_{ij}=x_{i-1,j}$ or $x_{ij}=x_{i-1,j+1}$ for some pair $(i,j)$, where $1\leq i\leq n-1$ and $i+j\leq n$. We also formally set $x_{0,k}=\lambda_{n-k+1}$. Denote the facets of the first type by $F_{ij}$, and the facets of the second type by $F^{ij}$. The total number of facets is thus equal to $2{\binom{n}{2}}=n(n-1)$.

 Every face of smaller dimension can be obtained as the intersection of certain facets. We will denote it by the indices corresponding to each of them:
 \[
 F_{i_1j_1,\dots,i_kj_k}^{r_1s_1,\dots,r_ms_m}=(F_{i_1j_1}\cap \dots \cap F_{i_kj_k})\cap (F^{r_1s_1}\cap\dots\cap F^{r_ms_m}).
 \]
Note that since $GZ(\lambda)$ is not simple, a presentation as the intersection of facets is not unique. 

\begin{definition} We say that $F$ is a \emph{Kogan face} (respectively \emph{dual Kogan face}) if it is obtained as the intersection of facets only of the form $F_{ij}$ (respectively $F^{ij}$).
\end{definition}

The minimal (by inclusion) Kogan face, contained in all Kogan faces, is the vertex with the smallest possible values of coordinates. It is defined by the equations
\begin{eqnarray*}
\lambda_1&=&x_{11}=x_{21}=\dots=x_{n-2,1}=x_{n-1,1},\\   
\lambda_2&=&x_{12}=x_{22}=\dots=x_{n-2,2},\\
&\dots\\
\lambda_{n-1}&=&x_{1,n-1}.
\end{eqnarray*}
Similarly, the minimal dual Kogan face is the vertex defined by the equations
\begin{eqnarray*}
\lambda_n&=&x_{1,n-1}=x_{2,n-2}=\dots=x_{n-2,2}=x_{n-1,1},\\   
\lambda_{n-1}&=&x_{1,n-2}=x_{2,n-3}=\dots=x_{n-2,1},\\
&\dots\\
\lambda_{2}&=&x_{11}.
\end{eqnarray*}
We will refer to these vertices as the \emph{Kogan vertex} and the \emph{dual Kogan vertex}, respectively. They are simple, so the equations defining them are independent, and there are $2^{\binom{n}{2}}$ Kogan faces in total, including the polytope itself, and the same number of dual Kogan faces.

\subsection{Relations in the polytope ring of $GZ(\lambda)$}\label{ssec:relations}

Here we describe the relations in the polytope ring of $GZ(\lambda)$. These relations are obtained from the equation~(\ref{eq:mm2}) of translation invariance:
\[
[GZ(\lambda)]=[GZ(\lambda)+v].
\]
Here it is enough to take $v=e_{rs}$ for all standard basis vectors $e_{rs}$ of $\R^{\binom{n}{2}}$.

Consider the Gelfand--Zetlin polytope $GZ(\lambda)$ defined by a dominant highest weight $\lambda=(\lambda_1>\dots>\lambda_n)$. As before, set $x_{0j}=\lambda_{n+1-j}$. Then $GZ(\lambda)$ is defined by $2^{\binom{n}{2}+1}$  inequalities
\[
x_{i-1,j}\leq x_{ij}\quad \text{and}\quad x_{ij}\leq x_{i-1,j+1}.
\]
Now shift this polytope by a standard basis vector $e_{rs}$. This means that for the shifted polytope $\widehat{GZ(\lambda)}=GZ(\lambda)+e_{ij}$, all inequalities not involving $x_{rs}$ remain the same, while the four inequalities with $x_{rs}$ will look as follows:
\begin{eqnarray*}
x_{r-1,s}&\leq& x_{rs}-1,\\ 
x_{rs}-1&\leq& x_{r-1,s+1},\\
x_{r+1,s-1}&\leq& x_{rs}-1,\\
x_{rs}-1&\leq& x_{r+1,s}.
\end{eqnarray*}
So the difference $\widehat{GZ(\lambda)}\setminus GZ(\lambda)$ (as a set of lattice points) is the union of two facets of $\widehat{GZ(\lambda)}$, namely, those obtained from $F^{rs}$ and $F_{r+1,s}$ by shifting by $e_{rs}$, while the difference $GZ(\lambda)\setminus \widehat{GZ(\lambda)}$ is the union of two faces $F_{rs}$ and $F^{r+1,s-1}$. Thus, in terms of integral convex chains we have the following identity: 
\[
\I_{\widehat{GZ(\lambda)}}-\I_{GZ(\lambda)}=(\I_{F^{rs}}+\I_{F_{r+1,s}}-\I_{F_{r+1,s}^{rs}})-(\I_{F_{rs}}+\I_{F^{r+1,s-1}}-\I_{F_{rs}^{r+1,s-1}}).
\]
This proves the following theorem.

\begin{theorem}[Six-term relations]\label{thm:6term} In the polytope ring of a Gelfand--Zetlin polytope, the following relations hold for every $(r,s)$, where $1\leq r\leq n-1$ and $1\leq s\leq n-r$:
\[
[F^{rs}]+[F_{r+1,s}]-[F_{r+1,s}^{rs}]=[F_{rs}]+[F^{r+1,s-1}]-[F_{rs}^{r+1,s-1}].
\]
Here the summands corresponding to the pairs of indices that are out of range are set to be zero.
\end{theorem}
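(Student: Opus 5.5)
The plan is to derive the relation from the translation relation~\eqref{eq:mm2}, $[GZ(\lambda)+e_{rs}]=[GZ(\lambda)]$, by writing the difference of the two convex chains $\I_{\widehat{GZ(\lambda)}}-\I_{GZ(\lambda)}$ explicitly as a signed sum of characteristic functions of faces. Throughout we work with integer convex chains, so all set-theoretic statements are about lattice points only. The polytope ring is a quotient of the convex-chain ring in which translates are identified. Hence any identity of convex chains passes to the ring, and we may also replace the shifted faces of $\widehat{GZ(\lambda)}$ by the corresponding faces of $GZ(\lambda)$.

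\textbf{Step 1.} Fix $(r,s)$ and set $\widehat{P}=GZ(\lambda)+e_{rs}$. Only the (at most) four inequalities involving $x_{rs}$ change, and each changes by an integer shift of~$1$. The two inequalities bounding $x_{rs}$ from below are $x_{r-1,s}\le x_{rs}$ and $x_{r+1,s-1}\le x_{rs}$. The two bounding it from above are $x_{rs}\le x_{r-1,s+1}$ and $x_{rs}\le x_{r+1,s}$.

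\textbf{Step 2.} Describe $\widehat P\setminus GZ(\lambda)$ on lattice points. A lattice point of $\widehat P$ fails to lie in $GZ(\lambda)$ exactly when $x_{rs}$ exceeds one of the upper bounds by one. In that case, after shifting back by $-e_{rs}$, the point lies on $F^{rs}$ (where $x_{rs}=x_{r-1,s+1}$) or on $F_{r+1,s}$ (where $x_{r+1,s}=x_{rs}$). By inclusion--exclusion,
\[
\I_{\widehat P\setminus GZ(\lambda)}=\I_{F^{rs}+e_{rs}}+\I_{F_{r+1,s}+e_{rs}}-\I_{F^{rs}_{r+1,s}+e_{rs}}.
\]
Symmetrically, $GZ(\lambda)\setminus\widehat P$ consists of the lattice points of $GZ(\lambda)$ where $x_{rs}$ equals one of the lower bounds, i.e.\ $F_{rs}\cup F^{r+1,s-1}$, with the intersection term subtracted.

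\textbf{Step 3.} Subtract the two expressions to obtain the displayed identity of integer convex chains. Then project to the polytope ring, using translation invariance to drop the shifts by $e_{rs}$ and $[\widehat P]=[GZ(\lambda)]$ to make the left side vanish. Indices out of range correspond to inequalities that are absent from the system (for $r=n-1$, or $s=1$, or $s=n-r$), so the corresponding faces do not occur in the inclusion--exclusion. This justifies setting those summands to zero.

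\textbf{Main obstacle.} The delicate step is the precise lattice-point description in Step~2. We must check that a point violating an upper bound violates it by exactly one, which holds because the shift is by a unit and all constraints are integral. We must also check that no point simultaneously violates an upper bound and a lower bound, so that the two differences really are the displayed unions. In addition, the shifted face is literally $F^{rs}+e_{rs}$, the face of $\widehat P$ cut out by equality. Since $x_{rs}$ appears in only these four inequalities and the polytope is nondegenerate, this reduces to a direct check on the four inequalities.
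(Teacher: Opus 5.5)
Your proposal is correct and follows the paper's argument essentially step for step. Like the paper, you shift $GZ(\lambda)$ by $e_{rs}$, identify the lattice-point differences $\widehat{GZ(\lambda)}\setminus GZ(\lambda)$ and $GZ(\lambda)\setminus\widehat{GZ(\lambda)}$ with $(F^{rs}\cup F_{r+1,s})+e_{rs}$ and $F_{rs}\cup F^{r+1,s-1}$, apply inclusion--exclusion to integer convex chains, and pass to the polytope ring via translation invariance. Your version is slightly more careful than the paper's, since it keeps the shift by $e_{rs}$ explicit in the convex-chain identity and explains why the out-of-range summands vanish.
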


The six-term relations can be graphically represented as shown in~Figure~\ref{fig:6term} below, by means of the diagrams representing faces of the Gelfand--Zetlin polytope.

\begin{figure}[h!]
\begin{center}
 \begin{tikzpicture}[baseline={(0,1)},scale=0.6]
 \draw[fill] (-1,3) circle [radius=0.1];
 \draw[fill] (1,3) circle [radius=0.1];
 \draw[fill] (0,2) circle [radius=0.1];
 \draw[fill] (-1,1) circle [radius=0.1];
 \draw[fill] (1,1) circle [radius=0.1];
 \draw[thick] (0,2)--(-1,3);
 \node[below] at (0,1.8) {\footnotesize{$rs$}};
\end{tikzpicture} $\quad +\quad$
\begin{tikzpicture}[baseline={(0,1)},scale=0.6]
 \draw[fill] (-1,3) circle [radius=0.1];
 \draw[fill] (1,3) circle [radius=0.1];
 \draw[fill] (0,2) circle [radius=0.1];
 \draw[fill] (-1,1) circle [radius=0.1];
 \draw[fill] (1,1) circle [radius=0.1];
 \draw[thick] (0,2)--(-1,1);
 \node[below] at (0,1.8) {\footnotesize{$rs$}};
\end{tikzpicture} $\quad -\quad$ 
\begin{tikzpicture}[baseline={(0,1)},scale=0.6]
 \draw[fill] (-1,3) circle [radius=0.1];
 \draw[fill] (1,3) circle [radius=0.1];
 \draw[fill] (0,2) circle [radius=0.1];
 \draw[fill] (-1,1) circle [radius=0.1];
 \draw[fill] (1,1) circle [radius=0.1];
 \draw[thick] (0,2)--(-1,3);
 \draw[thick] (0,2)--(-1,1);
 \node[below] at (0,1.8) {\footnotesize{$rs$}};
\end{tikzpicture} $\quad =\quad$
 \begin{tikzpicture}[baseline={(0,1)},scale=0.6]
 \draw[fill] (-1,3) circle [radius=0.1];
 \draw[fill] (1,3) circle [radius=0.1];
 \draw[fill] (0,2) circle [radius=0.1];
 \draw[fill] (-1,1) circle [radius=0.1];
 \draw[fill] (1,1) circle [radius=0.1];
 \draw[thick] (0,2)--(1,3);
 \node[below] at (0,1.8) {\footnotesize{$rs$}};
\end{tikzpicture} $\quad +\quad$
\begin{tikzpicture}[baseline={(0,1)},scale=0.6]
 \draw[fill] (-1,3) circle [radius=0.1];
 \draw[fill] (1,3) circle [radius=0.1];
 \draw[fill] (0,2) circle [radius=0.1];
 \draw[fill] (-1,1) circle [radius=0.1];
 \draw[fill] (1,1) circle [radius=0.1];
 \draw[thick] (0,2)--(1,1);
 \node[below] at (0,1.8) {\footnotesize{$rs$}};
\end{tikzpicture} $\quad -\quad$
\begin{tikzpicture}[baseline={(0,1)},scale=0.6]
 \draw[fill] (-1,3) circle [radius=0.1];
 \draw[fill] (1,3) circle [radius=0.1];
 \draw[fill] (0,2) circle [radius=0.1];
 \draw[fill] (-1,1) circle [radius=0.1];
 \draw[fill] (1,1) circle [radius=0.1];
 \draw[thick] (0,2)--(1,3);
 \draw[thick] (0,2)--(1,1);
 \node[below] at (0,1.8) {\footnotesize{$rs$}};
\end{tikzpicture}
\end{center}
\caption{Six-term relations}\label{fig:6term}
\end{figure}

\begin{remark} The linear parts of these relations are exactly the linear relations in the Pukhlikov--Khovanskii ring of a Gelfand--Zetlin polytope, see~\cite[Prop.~3.2]{kirichenko2012schubert}. This corresponds to replacing the operator of shift by $e_{ij}$ by taking the directional derivative along the same vector.
\end{remark}

\begin{example} Let $n=3$. In this case, all ``six-term'' relations contain only four or two terms; they are as follows:
\begin{eqnarray*}
    [F_{11}]&=&[F^{11}]+[F_{21}]-[F_{11}^{21}],\\
    {}[F_{12}]+[F^{21}]-[F_{12}^{21}]&=&[F^{12}],\\
    {}[F_{21}]&=&[F^{21}].
\end{eqnarray*}
\end{example}

\section{Full flag varieties}\label{sec:flags}
In this section we apply the theory of discrete duality algebras to give a polyhedral description of $T$-equivariant $K$-theory  of full flag varieties $\GL(n)/B$ of type $A$. We further investigate our construction to give polyhedral presentations for ($T$-equivariant) structure sheaves of Schubert varieties in $\GL(n)/B$. We present a natural set of relations in $K_0(\GL(n)/B)$ coming from its polyhedral presentation. This relies on combinatorics of Gelfand--Zetlin polytopes and earlier results of \cite{kirichenko2012schubert}. We remark that a similar analysis can be carried out for Lagrangian full flags, that is, for $G/B$ with $G$ of type $C$, relying on results of \cite{Fujita22}, and possibly for other reductive groups using Littelmann's string polytopes. However, we leave the details to future work.

%In the last section we discuss a new approach to Schubert calculus on full flag varieties. It is based on the construction of Khovanskii--Pukhlikov ring, discussed in the previous section. We will mostly follow the paper \cite{kirichenko2012schubert}.

\subsection{Convex chain constructed by a permutation}\label{ssec:permut}

In this subsection, we shall assign to each Kogan face $F$ a word $\underline w(F)$ in the alphabet $s_1,\dots,s_{n-1}$ of Coxeter generators of the symmetric group $S_n$, as follows. We mark the edge going from $x_{i-1,j}$ to $x_{i,j}$ by a simple transposition $s_{i+j-1}\in S_n$ (recall that $1\leq i,j$ and $i+j\leq n$), as shown on Figure~\ref{fig:koganface}, and take the word in $s_1,\dots,s_{n-1}$ obtained by reading the letters on the edges \emph{from bottom to top from left to right}.

\begin{figure}[h!]
\begin{center}
 \begin{tikzpicture}[scale=0.4]
 \draw[fill] (-1,3) circle [radius=0.1];
 \draw[fill] (1,3) circle [radius=0.1];
 \draw[fill] (3,3) circle [radius=0.1];
 \draw[fill] (5,3) circle [radius=0.1];
 \draw[fill] (0,2) circle [radius=0.1];
 \draw[fill] (2,2) circle [radius=0.1];
 \draw[fill] (4,2) circle [radius=0.1];
 \draw[fill] (1,1) circle [radius=0.1];
 \draw[fill] (3,1) circle [radius=0.1];
 \draw[fill] (2,0) circle [radius=0.1];
 \draw[thick] (2,0)--(-1,3);
 \draw[thick] (3,3)--(4,2);
 \node[below] at (-0.5,2.5) {$s_1$};
 \node at (1.5,2.5) {$s_2$};
 \node[below] at (3.5,2.5) {$s_3$};
 \node[below] at (0.5,1.5) {$s_2$};
 \node at (2.5,1.5) {$s_3$};
 \node[below] at (1.5,0.5) {$s_3$};
 %\node[above] at (2.75,1.25) {$\footnotesize{s_1}$};
% \node at (2,0) {$F_2$};
\end{tikzpicture} \qquad\qquad
 \begin{tikzpicture}[scale=0.4]
 \draw[fill] (-1,3) circle [radius=0.1];
 \draw[fill] (1,3) circle [radius=0.1];
 \draw[fill] (3,3) circle [radius=0.1];
 \draw[fill] (5,3) circle [radius=0.1];
 \draw[fill] (0,2) circle [radius=0.1];
 \draw[fill] (2,2) circle [radius=0.1];
 \draw[fill] (4,2) circle [radius=0.1];
 \draw[fill] (1,1) circle [radius=0.1];
 \draw[fill] (3,1) circle [radius=0.1];
 \draw[fill] (2,0) circle [radius=0.1];
 \draw[thick] (2,0)--(-1,3);
 \draw[thick] (3,3)--(4,2);
 \draw[thick] (2,2)--(3,1);
 \node[below] at (-0.5,2.5) {$s_1$};
 \node at (1.5,2.5) {$s_2$};
 \node[below] at (3.5,2.5) {$s_3$};
 \node[below] at (0.5,1.5) {$s_2$};
 \node[below] at (2.5,1.5) {$s_3$};
 \node[below] at (1.5,0.5) {$s_3$};
 %\node[above] at (2.75,1.25) {$\footnotesize{s_1}$};
% \node at (2,0) {$F_2$};
\end{tikzpicture} 
\end{center}
\caption{Diagrams of Kogan faces}\label{fig:koganface}
\end{figure}

\begin{definition}
Let $\underline w=(s_{i_1},\dots,s_{i_k})$ be a word. The \emph{Demazure product} $\delta(\underline w)$ of $\underline w$ is the permutation defined inductively as follows: $\delta(s_i)=s_i$, and $\delta(\underline w,s_i)$ equals $\delta(\underline w)s_i$ if $\ell(\delta(\underline w)s_i)>\ell(\delta(\underline w))$, and $\delta(\underline w)$ otherwise. Note if $\underline w$ is a reduced word, then $\delta(\underline w)=s_{i_1}\dots s_{i_k}$.
\end{definition}

\begin{definition} Let $F$ be a Kogan face of codimension $k$, and let $\underline{w}(F)=(s_{i_1},\dots,s_{i_k})$ be the corresponding word. We shall say that $F$ \emph{corresponds} to the permutation $\delta(\underline w(F))$. A Kogan face is said to be \emph{reduced} if the word $\underline{w}(F)$ is reduced, and \emph{non-reduced} otherwise.
\end{definition}

\begin{example} 
Diagrams on Figure~\ref{fig:koganface} produce the words $(s_3,s_2,s_1,s_3)$ and  $(s_3,s_2,s_3,s_1,s_3)$ respectively.
 Both of these faces correspond to the permutation $s_3s_2s_1s_3=(4231)$. The left of them is reduced, while the right one is not. 
\end{example}
    
Given a permutation $w\in W$, denote by $\Fm(w)$ the set of all Kogan faces $F$ corresponding to $w$, and by $\Gamma(w)$ the (set-theoretic) \emph{union} of all these faces. 

\begin{example} Consider the two simple transpositions $s_1,s_2\in S_3$. For $s_1$, there exists only one Kogan face corresponding to it, namely, $F_{11}$ (see Figure~\ref{fig:GZpolytope}); it is defined by the equation $x=\lambda_3$.  For $s_2$, there will be three such faces. Two of them are reduced and have codimension 1; they are defined by the equations $y=\lambda_2$ and $x=z$ and denoted by $F_{12}$ and $F_{21}$ on the same figure. The third face is nonreduced and has codimension 2; it is the intersection of the first two, namely, $F_{12,21}$. So $\Fm(s_2)=\{F_{12},F_{21}, F_{12,21}\}$ and $\Gamma(s_2)=F_{12}\cup F_{21}$.
\end{example}

\subsection{Demazure modules and key polynomials}\label{ssec:key}

In this subsection, we give a definition of Demazure modules and recall a theorem by Kiritchenko, Smirnov, and Timorin relating their characters to faces of Gelfand--Zetlin polytopes.

%We start with some standard notation. Let $G=\GL(n)$. let $B$ be a fixed Borel subgroup in $G$, and let $T\subset B$ be the corresponding maximal torus. Denote by $A(T)$ the character ring of $T$.

Let $G=\GL(n)$. We further identify $W$ with $S_n$. For $w\in W$, let $X_w=\overline{BwB/B}\subset G/B$ be the corresponding Schubert variety; in particular, for the longest element $w_0$ we have $X_{w_0}=G/B$. Let $\calL(\lambda)$ be the $G$-equivariant line bundle on $G/B$ defined as $G\times^B \C_{-\lambda}\to G/B$, and let $\calL_w(\lambda)$ denote the restriction of $\calL(\lambda)$  to $X_w$.

\begin{definition} Let $V(\lambda)=H^0(G/B,\calL_\lambda)$ be the irreducible representation with the highest weight $\lambda$. A \emph{Demazure module} $V_w(\lambda)$ for $w\in S_n$ is defined as follows. Take a vector of weight $w\lambda$ in $V(\lambda)$ (such a vector is unique up to a scalar) and consider the $B$-module of $V(\lambda)$ generated by this vector. We denote this $B$-module by $V_w(\lambda)$.
\end{definition}

The following theorem is well-known (cf., for instance, \cite[Sec.~3.3]{BrionKumar05}).
\begin{theorem} For $\lambda\in \Lambda$, $w\in W$, we have a $B$-module isomorphism $H^0(X_w,\calL_w(\lambda))\cong V_w(\lambda)$.
\end{theorem}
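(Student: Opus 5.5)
The plan is to identify both sides inside a single ambient $B$-module and compare them by restriction. The comparison map is
\[
r_w\colon V(\lambda)=H^0(G/B,\calL(\lambda))\to H^0(X_w,\calL_w(\lambda)),
\]
which is $B$-equivariant because $X_w$ is $B$-stable. I will show two things: that $r_w$ is surjective, and that its kernel is exactly the annihilator of the Demazure submodule under the natural pairing. Together these give $H^0(X_w,\calL_w(\lambda))\cong V_w(\lambda)^{\vee\vee}$, with the identification made via the extremal weight vector. Concretely, the line $\C v_{w\lambda}\subset V(\lambda)$ restricts to a nonzero section on $X_w$, because $v_{w\lambda}$ does not vanish at the $T$-fixed point $wB\in X_w$. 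Hence $r_w$ maps the $B$-module generated by $v_{w\lambda}$ injectively, via its dual pairing with the fibre at $wB$, onto a $B$-submodule of $H^0(X_w,\calL_w(\lambda))$.

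The steps, in order, are as follows.

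\textbf{Step 1 (normality).} I use that $X_w$ is normal and that the Bott--Samelson resolution $Z_{\underline w}\to X_w$ has $\pi_*\Om_{Z}=\Om_{X_w}$ and $R^i\pi_*\Om_Z=0$ for $i>0$. This gives
\[
H^0(X_w,\calL_w(\lambda))=H^0(Z_{\underline w},\pi^*\calL(\lambda))
\]
for every $\lambda\in\Lambda$.

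\textbf{Step 2 (Demazure induction on the resolution).} $Z_{\underline w}$ is an iterated $\mathbb P^1$-bundle, so the character of $H^0$ can be computed by iterating the operators $H^0(P_i/B,-)$. This is the geometric form of the Demazure character formula. For dominant $\lambda$, Frobenius splitting (or Kempf vanishing) of $Z_{\underline w}$ gives vanishing of higher cohomology. It also gives surjectivity of $r_w$, because the restriction maps $H^0(X_v)\to H^0(X_{v'})$ for $v'\le v$ are surjective.

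\textbf{Step 3 (generation).} I show by induction on $\ell(w)$ that $H^0(X_w,\calL_w(\lambda))^\vee$ is generated as a $B$-module by the functional ``evaluate at $wB$''. Write $w=s_iw'$ with $\ell(w)>\ell(w')$. The fibration $P_i\times^BX_{w'}\to X_w$ reduces the inductive step to the $\mathrm{SL}_2$ statement that $H^0(\mathbb P^1,\Om(m))$ is generated over the Borel subgroup by its extremal vector. Dualizing then gives exactly the $B$-module generated by $v_{w\lambda}$, which is $V_w(\lambda)$.

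The main obstacle is the case of non-dominant $\lambda\in\Lambda$, which the statement includes. There $V(\lambda)=0$ while $H^0(X_w,\calL_w(\lambda))$ can be nonzero; for instance, $w=e$ gives the one-dimensional module $\C_{-\lambda}$. So the statement can only hold if, for such $\lambda$, $V_w(\lambda)$ is read as the $B$-module generated by the extremal section at $wB$ inside $H^0(X_w,\calL_w(\lambda))$, rather than inside $V(\lambda)$. With that reading, Steps 1 and 3 never use dominance: they rely only on the normality and rational resolution of $X_w$ and on the rank-one computation. The only place dominance enters is the surjectivity of $r_w$ from the ambient $V(\lambda)$ in Step 2, and that step is simply omitted for non-dominant $\lambda$.

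I would first try to make Step 3 work uniformly in $\lambda$, since in rank one the $B$-generation of $H^0(\mathbb P^1,\Om(m))$ by its highest vector holds for every $m$, including $m<0$ where both sides vanish. The delicate point is the inductive step when $\langle w'\lambda,\alpha_i^\vee\rangle<0$. In that case the pushforward along the $\mathbb P^1$-fibre may lose sections, and one must check that the evaluation functional still generates the dual module.
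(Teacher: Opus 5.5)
There is a genuine gap. You conflate the Demazure module with its dual, and as an isomorphism of $B$-modules the statement only holds with a dual.

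Already for $w=e$, the space $H^0(X_e,\calL_e(\lambda))$ is the fibre of $G\times^B\C_{-\lambda}$ at $eB$, i.e.\ the $B$-module $\C_{-\lambda}$, while $V_e(\lambda)=\C_{\lambda}$. For dominant $\lambda$ the correct statement is $H^0(X_w,\calL_w(\lambda))\cong V_w(\lambda)^\vee$. The paper quotes the result from \cite[Sec.~3.3]{BrionKumar05} without proof and drops the dual; this is harmless for the dimension count in Corollary~\ref{cor:dimDemazure}.

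Your argument breaks exactly where it tries to avoid this dual. In the ``Concretely'' paragraph you require $v_{w\lambda}\in H^0(G/B,\calL(\lambda))$ both to generate $V_w(\lambda)$ under $B$ and, as a section, not to vanish at $wB$. These requirements are incompatible. Take $w=e$ and $\dim V(\lambda)>1$:
\begin{itemize}
\item The $T$-eigensection not vanishing at $eB$ is the $B^-$-semi-invariant one, since that section's zero divisor is $B^-$-stable and so misses the open orbit $B^-B/B\ni eB$. Being a lowest weight vector for $B$, it generates all of $H^0(G/B,\calL(\lambda))$ as a $B$-module. So $r_e$ is not injective on the module it generates.
\item The $B$-semi-invariant section, which spans $V_e(\lambda)$, has as zero divisor a union of Schubert divisors. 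All of these pass through the unique $B$-fixed point $eB$, so this section restricts to zero on $X_e$.
\end{itemize}
For the same reason, your proposed reading of $V_w(\lambda)$ for non-dominant $\lambda$ already fails for dominant $\lambda$ and $w=w_0$: the section not vanishing at $w_0B$ is $B$-semi-invariant and generates only a line. Finally, the ``dualizing'' at the end of Step~3 identifies $H^0(X_w,\calL_w(\lambda))^\vee$, not $H^0(X_w,\calL_w(\lambda))$, with $B\cdot v_{w\lambda}$. The $V_w(\lambda)^{\vee\vee}$ in your first paragraph is a symptom of this.

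Your first paragraph has the right mechanism once the extremal vector lives in the dual, i.e.\ once you use $V(\lambda)\cong H^0(G/B,\calL(\lambda))^\vee$. This is the correct Borel--Weil statement for $G\times^B\C_{-\lambda}$. Then argue as follows:
\begin{itemize}
\item Take $v_{w\lambda}=\mathrm{ev}_{wB}$, evaluation at $wB$ after trivializing the fibre.
\item Since $b\cdot\mathrm{ev}_{wB}$ is evaluation at $bwB$, the Demazure module $V_w(\lambda)=B\cdot\mathrm{ev}_{wB}$ is spanned by evaluations at points of $BwB/B$.
\item Its annihilator in $H^0(G/B,\calL(\lambda))$ consists of the sections vanishing on the dense open subset $BwB/B$ of the reduced irreducible variety $X_w$. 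This is exactly $\ker r_w$.
\item Add surjectivity of $r_w$ for dominant $\lambda$. Your Step~2 asserts this rather than proving it, but it is the standard consequence of compatible Frobenius splitting.
\end{itemize}
Together these give
\[
H^0(X_w,\calL_w(\lambda))\cong H^0(G/B,\calL(\lambda))/V_w(\lambda)^{\perp}\cong V_w(\lambda)^\vee .
\]
The same density argument proves your Step~3 for every line bundle in one line. The induction on $\ell(w)$, the rank-one analysis and the ``delicate point'' therefore all disappear. Step~1 is not needed for the identification either: normality and rational singularities enter only the proofs of surjectivity and vanishing. For non-dominant $\lambda$, simply restrict the statement to dominant weights, which is the only case in which the paper applies it.
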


The character of a Demazure module $\kappa_{w,\lambda}=\ch V_w(\lambda)$ is an element of the group algebra $R(T)=\Z[\Lambda]$ of the character group. It is called the \emph{key polynomial} corresponding to $w$ and $\lambda$. These polynomials, defined by M.\,Demazure in~\cite{Demazure74a, Demazure74b}, have many nice combinatorial properties and descriptions; for a survey, see, for example, the thesis of A.\,Pun~\cite{Pun16}.

The following theorem provides a relation between key polynomials and faces of Gelfand--Zetlin polytopes in terms of the collections of Kogan faces constructed from a Weyl group element as described in the previous subsection.
\begin{theorem}[{\cite[Theorem~5.1]{kirichenko2012schubert}}]\label{thm:Kogankey} The key polynomial $\kappa_{w,\lambda}$ is obtained as the sum over all integer points of the set of Kogan faces of $GZ(\lambda)$ corresponding to $w$:
\[
\kappa_{w,\lambda}=\sum_{x\in\Gamma(w)\cap \Z^N} e^{\pr(x)}.
\]
\end{theorem}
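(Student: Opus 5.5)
The plan is to follow the strategy of \cite[Theorem~5.1]{kirichenko2012schubert}, which is an induction on the length of $w$ driven by Demazure operators. It has three parts: compute the characters on both sides for simple reflections, show that both sides transform the same way under Demazure operators, and deduce the general case by induction.

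\textbf{Base case.} For $w=e$ the Demazure module $V_e(\lambda)$ is the line spanned by the highest weight vector. On the other side, $\Fm(e)$ consists only of the Kogan vertex. Its coordinates are $x_{ij}=\lambda_j$, and applying $\pr$ gives the weight $\lambda$, so the base case holds. The Demazure character formula then gives, for $\ell(ws_i)>\ell(w)$,
\[
\kappa_{ws_i,\lambda}=\pi_i(\kappa_{w,\lambda}),\qquad
\pi_i(f)=\frac{f-e^{-\alpha_i}s_i(f)}{1-e^{-\alpha_i}}.
\]
Here the operator is written in the convention matching our choice of left/right action. The index $i$ ranges over the letters that are appended when reading the word $\underline w(F)$ of a Kogan face.

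\textbf{Geometric Demazure operator.} The next step is to model $\pi_i$ on Gelfand--Zetlin faces. By the reading order of Figure~\ref{fig:koganface}, appending the letter $s_i$ to $\underline w(F)$ means freeing one additional coordinate, namely one lying on the diagonal labelled $s_i$. That coordinate is no longer pinned to its upper-left neighbour by the equation $x_{i-1,j}=x_{ij}$. I would proceed through the following steps.

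\begin{enumerate}
\item For a face $F$ with $\delta(\underline w(F))=w$ and $\ell(ws_i)>\ell(w)$, consider the union $\bigcup_{F'}F'$ over the Kogan faces $F'$ whose word is $\underline w(F)$ with $s_i$ appended. Show that this union is fibred over $F$ by segments in the coordinates of the $s_i$-diagonal.
\item Show that the integer points of each such fibre project under $\pr$ onto an $\alpha_i$-string.
\item Conclude that summing $e^{\pr(x)}$ over the integer points of the fibred union equals $\pi_i$ applied to the sum over $F$. This is the classical statement that the Gelfand--Zetlin patterns over a fixed row split into $\mathfrak{sl}_2$-strings, since the row sums change by $\alpha_i$ along each fibre.
\end{enumerate}

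\textbf{Non-reduced faces.} Reduced words alone do not account for the full union $\Gamma(ws_i)$. For this I would use the Demazure product $\delta$. Applying $\pi_i$ to a character $f$ that is already $s_i$-invariant, in the form $\pi_i(f)=f$, corresponds to the case $\ell(ws_i)<\ell(w)$. In that case $\delta(\underline w,s_i)=w$. Geometrically, the corresponding added faces are already contained in $\Gamma(w)$, so they do not change the union.

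\textbf{Induction.} Combining the three parts by induction on $\ell(w)$ gives $\kappa_{w,\lambda}=\sum_{x\in\Gamma(w)\cap\Z^N}e^{\pr(x)}$.

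\textbf{Main obstacle.} I expect the key difficulty to be showing that the union $\Gamma(ws_i)$ is exactly the $s_i$-saturation of $\Gamma(w)$. Two things are needed:
\begin{itemize}
\item the lattice points of the $s_i$-saturation of $\Gamma(w)$ must be counted without double counting the overlaps between different faces;
\item the fibres must project to honest $\alpha_i$-strings, which requires checking that the relevant coordinates are free between the correct neighbouring bounds.
\end{itemize}
My first attempt would be to handle this via the ladder-move and mitosis combinatorics of Kogan faces, reducing to the statement that the faces in $\Fm(w)$ are in bijection with pipe dreams for $w$ in the sense of Knutson--Miller.
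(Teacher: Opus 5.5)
The paper gives no proof of this statement; it only quotes it from \cite{kirichenko2012schubert}. Your proposal therefore has to stand on its own, and as written it does not.

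First, your bookkeeping is inverted. With the definitions in force, $\underline w(F)$ lists the equations $x_{ij}=x_{i-1,j}$ \emph{imposed} on $F$, so appending a letter passes to a smaller face. Consequently $\Fm(e)=\{GZ(\lambda)\}$, and the Kogan vertex is the unique element of $\Fm(w_0)$: its word $s_{n-1}\,s_{n-2}s_{n-1}\cdots s_1s_2\cdots s_{n-1}$ is a reduced word of $w_0$. Your base case and Step~1 (``appending $s_i$ frees a coordinate'', ``fibred over $F$'') contradict this.

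In fact, under these conventions the displayed identity only holds after re-indexing the Demazure side by $w\mapsto ww_0$. For $n=3$, $\Gamma(s_1)=F_{11}$ has $(b+1)(a+\frac b2+1)=\dim V_{s_2s_1}(\lambda)$ lattice points, not $b+1=\dim V_{s_1}(\lambda)$ (cf.\ Table~\ref{tab:gz3}).

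Moreover, with $V_w(\lambda)$ generated by the vector of weight $w\lambda$, the Demazure recursion is $\kappa_{s_iu,\lambda}=\pi_i\kappa_{u,\lambda}$ for $\ell(s_iu)>\ell(u)$. It is not $\kappa_{ws_i,\lambda}=\pi_i\kappa_{w,\lambda}$, and no choice of convention rescues that: $\pi_2\kappa_{s_1,\lambda}=\kappa_{s_2s_1,\lambda}\neq\kappa_{s_1s_2,\lambda}$. So a correct induction must build $\Gamma(s_iw)$ from $\Gamma(w)$ when $\ell(s_iw)<\ell(w)$, by releasing the \emph{first} letter in reading order. Separately, every non-reduced face of $\Fm(w)$ lies in a reduced one, because the letters at which the Demazure product grows form a reduced subword. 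So non-reduced faces never change $\Gamma(w)$ as a set, and they have nothing to do with the case $\ell(ws_i)<\ell(w)$.

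Second, even after these repairs, Steps 1--3 are false as stated, and your ``main obstacle'' is the entire content of the theorem.
\begin{enumerate}
\item Releasing $x_{ij}$ moves $\pr(x)$ along $\alpha_i$ (the row index), while the edge is labelled $s_{i+j-1}$. For example, $F_{12}\in\Fm(s_2)$ for $n=3$, but releasing $y=x_{12}$ moves along $\alpha_1$.
\item More seriously, an $\alpha_i$-string is not a segment in one coordinate. It moves several entries of row $i$ in succession and passes through several Kogan faces.
\end{enumerate}
Here is an example for $n=3$. The face $\Gamma(s_1s_2)=F_{11,12}$ carries $\kappa_{s_2,\lambda}$, and its $\alpha_1$-saturation must be $\Gamma(s_2)=F_{12}\cup F_{21}$. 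The string through $(\lambda_3,\lambda_2,z)$ first increases $x$ up to $z$ inside $F_{12}$, then increases $y$ up to $\lambda_1$ inside $F_{21}$. But $F_{21}$ does not contain $F_{11,12}$ and is not obtained from it by releasing a coordinate. Meanwhile $F_{12}$ alone has only $\frac{(a+1)(a+2)}{2}$ points, not the required $(a+1)(\frac a2+b+1)$. So the new faces arise from ladder-type moves rather than by fibring over faces of $\Fm(w)$, and the overlaps (here $F_{12,21}$) are exactly where the strings turn.

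What is missing is an actual proof that the union of $\alpha_i$-strings through $\Gamma(w)$ equals $\Gamma(s_iw)$. One route is the bracketing (crystal) rule on row $i$ of Gelfand--Zetlin patterns together with an analysis of how these strings meet Kogan faces. Another is geometric, via the Kogan--Miller toric degeneration of Schubert varieties. A bijection between $\Fm(w)$ and pipe dreams does not supply this: it matches index sets, which gives degrees or Schubert/Grothendieck coefficients. It says nothing about how the lattice points of the union, with their weights, split into $\alpha_i$-strings.
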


Specializing the character at $0$, we obtain the dimension formula for $V_w(\lambda)$:
\begin{corollary}\label{cor:dimDemazure} The Euler characteristic of $\calL_w(\lambda)$ is equal to 
\[
\chi(X_w, \calL_w(\lambda))=\dim V_w(\lambda)=\kappa_{w,\lambda}(0)=\#(\Gamma(w)\cap \Z^N).
\]
\end{corollary}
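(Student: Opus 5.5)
The chain of equalities has three links, and I would check them in order. The key input is Theorem~\ref{thm:Kogankey}, so the corollary should follow from it together with standard facts about Schubert varieties.

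\emph{First equality.} Here $\chi(X_w,\calL_w(\lambda))=\dim V_w(\lambda)$, which requires the vanishing of higher cohomology. For dominant $\lambda$, Demazure's character formula (equivalently, the normality and Frobenius splitting of Schubert varieties, cf.~\cite[Sec.~3.3]{BrionKumar05}) gives
\[
H^i(X_w,\calL_w(\lambda))=0 \quad\text{for } i>0 .
\]
Hence the Euler characteristic equals $\dim H^0(X_w,\calL_w(\lambda))$. By the theorem quoted just before, $H^0(X_w,\calL_w(\lambda))\cong V_w(\lambda)$ as $B$-modules, so the two dimensions agree. This first link is the only place where genuine geometric input is needed, namely the vanishing theorem. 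I would simply invoke it, since the statement is only claimed for dominant $\lambda$; the strictly dominant $\lambda$ used to define $GZ(\lambda)$ certainly qualifies.

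\emph{Second equality.} Next, $\dim V_w(\lambda)=\kappa_{w,\lambda}(0)$. Here $\kappa_{w,\lambda}=\ch V_w(\lambda)=\sum_\mu m_\mu e^{\mu}\in\Z[\Lambda]$. I read ``specializing at $0$'' as applying the augmentation $e^\mu\mapsto 1$, that is, evaluating the exponentials at the identity of $T$. This sends the character to $\sum_\mu m_\mu=\dim V_w(\lambda)$.

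\emph{Third equality.} Finally, $\kappa_{w,\lambda}(0)=\#(\Gamma(w)\cap\Z^N)$. Apply the same augmentation to both sides of Theorem~\ref{thm:Kogankey}. The right-hand side $\sum_{x\in\Gamma(w)\cap\Z^N}e^{\pr(x)}$ becomes the number of lattice points of the union $\Gamma(w)$. This works because the sum in Theorem~\ref{thm:Kogankey} runs over the set $\Gamma(w)\cap\Z^N$, so each point is counted once even when it lies in several Kogan faces. This is the projection-to-a-point phenomenon of Example~\ref{ex:to-origin}.

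The only substantive obstacle is the cohomology vanishing in the first step. Everything else is formal specialization of Theorem~\ref{thm:Kogankey}.
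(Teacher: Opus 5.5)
Your proposal is correct and follows the paper's route: the paper obtains the corollary by specializing the character identity of Theorem~\ref{thm:Kogankey} under the augmentation $e^\mu\mapsto 1$, together with the isomorphism $H^0(X_w,\calL_w(\lambda))\cong V_w(\lambda)$. The only difference is that you state explicitly the vanishing $H^i(X_w,\calL_w(\lambda))=0$ for $i>0$ and dominant $\lambda$, which the paper uses tacitly in the first equality.
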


We finish this section by identifying classes of structure sheaves of Schubert varieties in $K_{GZ}$. In a view of Corollary~\ref{cor:dimDemazure}, we would like to say that the class $\Dm_w$ representing the class of the structure sheaf of a Schubert variety $X_w$ is given by
\[
\Dm_w = \sum_{\Gamma\in \Fm(w)}(-1)^{\ell(w)-\dim(\Gamma)}D_\Gamma,
\]
where $D_\Gamma$ is an element of $K_{GZ}$ defined by the identity
\[
D_\Gamma \cdot \Ehr (GZ_\lambda) = \#(\Gamma_\lambda\cap \Z^N).
\]
However, unlike in the case of $K$-ring associated to a smooth complete fan $\Sigma$ (see Lemma~\ref{lem:Dsig}), the operators $D_\Gamma$ do not necessarily exist in $K_{GZ}$ for any face $\Gamma$. Nevertheless, the whole sum $\Dm_w$ is a well-defined operator in $K_{GZ}$, as it represents a function in the cyclic module $\Sh(\lambda)\cdot \Ehr(GZ(
\lambda)) = \Sh(\lambda)\cdot \chi(G/B,\Lm(\lambda))$, see Remark~\ref{rem:homology}. Indeed,
\[
\Dm_w \cdot \Ehr(GZ(
\lambda)) = \#(\Gamma(w)\cap \Z^N) = \chi(X_w, \calL_w(\lambda)) \in \Sh(\Lambda)\cdot \chi(G/B,\Lm(\lambda)),
\]
where the first equality follows from the inclusion-exclusion formula. We arrive at the following statement.

\begin{theorem}\label{thm:schubert classes} For a permutation $w\in W$, the class $[\Om_w]$ of the structure sheaf of Schubert variety $X_w$ is represented in $K_{GZ}$ by $\Dm_w$. In particular, the class $\Dm_w$ is well defined in $K_{GZ}$. Moreover, the structure sheaf of Schubert variety $X_w$ has the same presentation in $T$-equivariant $K$-ring $K_{GZ}^T$.
\end{theorem}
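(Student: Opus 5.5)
The argument runs through the identification of $K_{GZ}$ with the cyclic module $M_f=\Sh(\Lambda)\cdot f$ of Remark~\ref{rem:homology}, where $f(\lambda)=\Ehr(GZ(\lambda))=\chi(G/B,\Lm_\lambda)$ by Proposition~\ref{prop:virtual equality}. Under this identification an element of $K_{GZ}$ is the same thing as a function on $\Lambda$ of the form $s\cdot f$. Two elements of $K_{GZ}$ coincide iff their actions on $f$ agree at every $\lambda\in\Lambda$. So it suffices to compare, for all $\lambda$, the function $[\Om_w]\cdot f$ with the function $\lambda\mapsto\#(\Gamma(w)_\lambda\cap\Z^N)$, which by inclusion–exclusion is what $\Dm_w$ is supposed to act as.

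The first step identifies $[\Om_w]$ inside $K_{GZ}$. Theorem~\ref{thm:kflag-general} and Theorem~\ref{thm:goralgrec} give an isomorphism $K_0(G/B)\to K_{GZ}$ sending $[\Lm_\mu]$ to the shift $T_{-\mu}$, so that $\ell_f$ corresponds to $\chi(G/B,\cdot)$. Choose any $s\in\Sh(\Lambda)$ mapping to $[\Om_w]$; such an $s$ exists because $K_0(G/B)$ is generated by line bundles. Then $(s\cdot f)(\lambda)=\ell_f(T_{-\lambda}s)=\chi(G/B,\Om_{X_w}\otimes\Lm_\lambda)=\chi(X_w,\calL_w(\lambda))$.

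The second step shows that this function equals the lattice-point count of $\Gamma(w)$ on dominant $\lambda$. For $\lambda$ in the dominant cone, Corollary~\ref{cor:dimDemazure} gives $\chi(X_w,\calL_w(\lambda))=\#(\Gamma(w)_\lambda\cap\Z^N)$. Inclusion–exclusion over the Kogan faces in $\Fm(w)$ writes this as $\sum_{\Gamma\in\Fm(w)}(-1)^{\ell(w)-\dim\Gamma}\#(\Gamma_\lambda\cap\Z^N)$. This needs the combinatorial fact, from \cite{kirichenko2012schubert}, that $\Fm(w)$ is closed under the relevant intersections with the stated sign pattern; the example of $s_2$ in Subsection~\ref{ssec:permut} illustrates it.

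The third step, which is the main obstacle, extends the identity to all of $\Lambda$, so that $\Dm_w$ is a genuine operator rather than a formal sum of possibly nonexistent $D_\Gamma$. I would handle it exactly as in Proposition~\ref{prop:virtual equality}, working equivariantly. The left side $\chi_T(X_w,\calL_w(\lambda))$ is given for all $\lambda$ by the Demazure character formula, $\pi_w(e^\lambda)$ with Demazure operators. This is a finite sum of rational functions whose numerators are monomials with exponents affine-linear in $\lambda$ and whose denominators are fixed. On the right, each $\sigma_\pr(\Gamma_\lambda)$, with $\Gamma_\lambda$ extended virtually from a maximal cone $\tau$ of the Gelfand--Zetlin family, is given by Brion's theorem (Theorem~\ref{thm:brion}) as a similar expression. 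Their difference vanishes on the full-dimensional cone $\tau$ by Theorem~\ref{thm:Kogankey}. Hence, by the same cancellation argument, it vanishes identically on $\Lambda$. Setting $t=1$ then gives the non-equivariant identity.

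Finally, each $\lambda\mapsto\#(\Gamma_\lambda\cap\Z^N)$ is realized by a shift operator in $K_{\Sigma'}$ for a smooth subdivision $\Sigma'$ of the GZ fan, by Lemma~\ref{lem:Dsig}. Proposition~\ref{prop:extended} places $\Dm_w$ in $M_{\Sigma',GZ}$. The equality of functions from the previous step shows that $\Dm_w$ lies in the image of the monomorphism $\iota$ and equals $\iota(s)$. Hence $\Dm_w$ is well defined in $K_{GZ}$ and represents $[\Om_w]$. The equivariant statement follows verbatim from the equivariant identity, using Theorem~\ref{thm:dualitywithcoef} in place of Theorem~\ref{thm:goralgrec}.
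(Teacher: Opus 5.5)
Your proof is correct and follows the paper's route. Because $K_0(G/B)$ is generated by line bundles and $K_{GZ}$ is identified with the cyclic module $\Sh(\Lambda)\cdot\Ehr(GZ(\lambda))$, everything reduces to $\chi(X_w,\calL_w(\lambda))=\Dm_w\cdot\Ehr(GZ(\lambda))$. That identity comes from Corollary~\ref{cor:dimDemazure} and inclusion--exclusion over $\Fm(w)$, with Theorem~\ref{thm:Kogankey} in the equivariant case.

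Your extension off the dominant cone (via the Demazure character formula and Brion's theorem) and your realization of $\Dm_w$ through a smooth subdivision and Proposition~\ref{prop:extended} spell out what the paper leaves implicit or defers to the remark after the theorem. Non-equivariantly, polynomiality in $\lambda$ of both sides would already suffice for the extension.

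One correction: $\Fm(w)$ is not literally closed under intersections. For example, in $S_4$ the faces $F_{21,22}$ and $F_{12,13}$ both correspond to $s_2s_3$, but their intersection corresponds to $s_2s_3s_2$. The inclusion--exclusion identity really rests on the Knutson--Miller theorem that subword complexes are balls or spheres.
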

\begin{proof}
Since $K_0(G/B)$ is generated  by $\Pic(G/B)$ it is enough to check that $\chi(\calL_w(\lambda))=\Dm_w\cdot \Ehr(GZ(\lambda))$. Therefore the theorem follows from Corollary~\ref{cor:dimDemazure} and the fact that $\Dm_w\cdot \Ehr(GZ(\lambda))=\#(\Gamma(w)\cap \Z^N)$ by the inclusion-exclusion formula. The equivariant statement follows directly from Theorem~\ref{thm:Kogankey}.
\end{proof}

\begin{remark}
    An alternative way to state Theorem~\ref{thm:schubert classes} is to consider a resolution of $\widetilde{GZ}$ of the Gelfand-Zetlin fan and to work in the module $M_{\widetilde{GZ},GZ}$ that is associated to this resolution (see Proposition~\ref{prop:extended} and the end of Subsection~\ref{ssec:structureK}). In the $K$-ring $K_{\widetilde{GZ}}$ there is a well-defined operator $D_\Gamma$ and thus  $\Dm_w$ is automatically well-defined. On the other hand, we have an epimorphism $\pi\colon K_{\widetilde{GZ}}\to M_{\widetilde{GZ},GZ}$ and an injection $i\colon K_{GZ}\to M_{\widetilde{GZ},GZ}$. An argument similar to the proof of Theorem~\ref{thm:schubert classes} shows that $\pi(\Dm_w)$ is in the image of $i$, so it defines a class in $K_{GZ}$. This approach, realized for the cohomology case in~\cite[Sec.~2]{kirichenko2012schubert}, is useful for doing concrete computations.
\end{remark}

\subsection{Example: computations in $K(\GL(3)/B)$}\label{ssec:example_gl3}

In this subsection we consider the first nontrivial example: the $K$-ring of three-dimensional flag variety $\GL(3)/B$. 

Let us take the three-dimensional Gelfand--Zetlin polytope $GZ(\lambda)$ with the first row given by a strictly increasing sequence $(\lambda_3,\lambda_2,\lambda_1)$. We also denote $a=\lambda_2-\lambda_3$ and $b=\lambda_1-\lambda_2$. This means that the edges $e_1$, $e_3$ and $e_5$  contain $a+1$ integer points each, while each of $e_2$, $e_2$, and $e_6$ contains $b+1$ integer points (see~Fig.\ref{fig:GZpolytope}).

The correspondence between collections of faces of $GZ(\lambda)$ and permutations was given in~\cite[Sec.~4.3]{kirichenko2012schubert}. It can be obtained by a direct computation. We give it in Table~\ref{tab:gz3}; for each $w\in S_3$, we provide  linear combination of the corresponding Kogan faces and their Ehrhart polynomial.

\begin{table}[]
    \centering
    \begin{tabular}{|c|c|c|}
        \hline
         $w$ & Faces & $\Ehr(w)$  \\
         \hline
         $s_1s_2s_1$ & $F_{11,12,21}$ &1\\
         $s_1s_2$ & $F_{11,12}$ & $a+1$\\
         $s_2s_1$ & $F_{11,21}$ & $b+1$\\
         $s_1$ &$F_{11}$ & $(b+1)(a+\frac b2+1)$\\
         $s_2$ &$F_{12}\cup F_{21}=F_{12}+F_{21}-F_{12,21}$ & $(a+1)(\frac{a}{2}+b+1)$\\
         $Id$ & $GZ(\lambda)$ & $\frac{1}{2}(a+1)(b+1)(a+b+2)$\\
         \hline
    \end{tabular}
    \caption{Combinations of faces and their Ehrhart polynomials}
    \label{tab:gz3}
\end{table}

Now let us perform some computations in $K(\GL(3)/B)$ using this table. First compute $[\Om_{s_1}][\Om_{s_2}]$. To do this, we take the elements of polytope ring corresponding to $s_1$ and $s_2$; these are $[F_{11}]$ and $[F_{12}]+[F_{21}]-[F_{12,21}]$. These convex chains are transversal, and their intersection is equal to $[F_{11,12}]+[F_{11,21}]-[F_{11,12,21}]$. This is the union of two edges adjacent to $F_{11}$ and the Kogan vertex. These edges correspond to the classes of structure sheaves $[\Om_{s_1s_2}]$ and $[\Om_{s_2s_1}]$ respectively, and the vertex corresponds to $[\Om_{s_1s_2s_1}]$. Hence we have
\[
[\Om_{s_1}][\Om_{s_2}]=[\Om_{s_1s_2}]+[\Om_{s_2s_1}]-[\Om_{s_1s_2s_1}].
\]

The same computation can be carried out in terms of Ehrhart polynomials. The Ehrhart polynomial for the convex chain $\I_{F_{11,12}}+\I_{F_{11,21}}-\I_{F_{11,12,21}}$ is equal to $a+b+1$. Now we need to represent this polynomial as a linear combination of $\Ehr(w)$'s:
\[
a+b+1=(a+1)+(b+1)-1,
\]
and we obtain the same result.

Similarly, we can compute $[\Om_{s_1}]^2$. This class corresponds to the face $F_{11}$. To compute its square, we need to replace this face by an equivalent convex chain representing faces transversal to it, by means of the relations from Sec.~\ref{ssec:relations}. We have $[F_{11}]=[F^{11}]+[F_{21}]-[F_{21}^{11}]$. The product of these two convex chains is
\[
[F_{11}]\cdot([F^{11}]+[F_{21}]-[F_{21}^{11}])=[F_{11,21}].
\]
(the intersection of $F_{11}$ with both $F^{11}$ and $F_{21}^{11}$ is empty). The edge $F_{11,21}$ corresponds to $s_2s_1$. So we have 
\[
[\Om_{s_1}]^2=[\Om_{s_2s_1}].
\]
Alternatively,  we can say that the Ehrhart polynomial of $F_{11,21}$ is equal to $b+1$, and so the corresponding class in the $K$-ring is $[\Om_{s_2s_1}]$.

\bibliographystyle{alpha}
\bibliography{tb}

\Addresses

\end{document}